\documentclass[a4paper,reqno,final]{amsart}
%\documentclass[a4paper,reqno,draft,dvipdfmx]{amsart}
%\documentclass[a4paper,reqno,final,dvipdfmx]{amsart}

%%%%%%%%%%%%%%%%%%%%%%%%%%%%%%%%%%%%%%%%%%%%%%%%%%%%%%%%%%%%%%%%%%%%%%%%%%%%%%%%%%%%%%%%%%
\usepackage{amsmath,amssymb,amsthm}
\usepackage{fullpage,enumitem}
\usepackage{mathtools}
\mathtoolsset{showonlyrefs,showmanualtags}
\usepackage{multirow,array}
\usepackage{showkeys,version}
\usepackage{tikz} 
\usetikzlibrary{cd}
\usepackage{dynkin-diagrams}
\hypersetup{bookmarks=false,draft=false,breaklinks,colorlinks,dvipdfmx}

%%%%%%%%%%%%%%%%%%%%%%%%%%%%%%%%%%%%%%%%%%%%%%%%%%%%%%%%%%%%%%%%%%%%%%%%%%%%%%%%%%%%%%%%%%
\numberwithin{equation}{subsection}
\numberwithin{figure}{subsection}
\numberwithin{table}{subsection}
\allowdisplaybreaks
\setcounter{tocdepth}{2}

%\newenvironment{NB}{\color{blue} \textbf{NB}.\ }{}
%\newenvironment{NB2}{\color{red} }{}
%\excludeversion{NB}
%\excludeversion{NB2}

\newenvironment{Ack}%
{\par \vspace{\baselineskip}%
 \noindent \textbf{Acknowledgements.}}%
{\par \vspace{\baselineskip}}

%%%%%%%%%%%%%%%%%%%%%%%%%%%%%%%%%%%%%%%%%%%%%%%%%%%%%%%%%%%%%%%%%%%%%%%%%%%%%%%%%%%%%%%%%%
\theoremstyle{definition}
\newtheorem{thm}{Theorem}[subsection]
\newtheorem{prp}[thm]{Proposition}

\newtheorem{lem}[thm]{Lemma}
\newtheorem{dfn}[thm]{Definition}
\newtheorem{fct}[thm]{Fact}
\newtheorem{rmk}[thm]{Remark}
\newtheorem{eg}[thm]{Example}
\newtheorem{mth}{Theorem}

%%%%%%%%%%%%%%%%%%%%%%%%%%%%%%%%%%%%%%%%%%%%%%%%%%%%%%%%%%%%%%%%%%%%%%%%%%%%%%%%%%%%%%%%%%
\newcommand{\oa}{\overrightarrow}
\newcommand{\ol}{\overline}

\newcommand{\wt}{\widetilde}
\newcommand{\ep}{\epsilon}

\newcommand{\vp}{\varphi}

\newcommand{\inj}{\hookrightarrow}
\newcommand{\lto}{\longrightarrow}
\newcommand{\lrt}{\leftrightarrow}
\newcommand{\lrto}{\longleftrightarrow}
\newcommand{\lmto}{\longmapsto}
\newcommand{\linj}{\lhook\joinrel\longrightarrow}
\newcommand{\xr}{\xrightarrow}
\newcommand{\sto}{\xr{\sim}}

\newcommand{\tbcup}{\textstyle \bigcup}

\newcommand{\hf}{\frac{1}{2}}
\newcommand{\thf}{\tfrac{1}{2}}
\newcommand{\shf}{1/2}

\newcommand{\RY}{\mathrm{RY}}
\newcommand{\tsp}{\mathrm{sp}}

\newcommand{\bK}{\mathbb{K}}
\newcommand{\bN}{\mathbb{N}}
\newcommand{\bQ}{\mathbb{Q}}
\newcommand{\bR}{\mathbb{R}}
\newcommand{\bZ}{\mathbb{Z}}
\newcommand{\bfb}{\mathbf{b}}
\newcommand{\frS}{\mathfrak{S}}

\DeclareMathOperator{\id}{id}
\DeclareMathOperator{\sh}{sh}
\DeclareMathOperator{\GL}{GL}
\DeclareMathOperator{\hgt}{ht}
\DeclareMathOperator{\wgt}{wt}
\DeclareMathOperator{\dir}{d}

\DeclareMathOperator{\End}{End}

\newcommand{\abs}[1]{\left|{#1}\right|}
\newcommand{\br}[1]{\left< #1 \right>}
\newcommand{\bbr}[1]{\bigl< #1 \bigr>}
\newcommand{\tbr}[1]{\langle #1 \rangle}
\newcommand{\pr}[1]{\left\{ #1 \right\}}
\newcommand{\bpr}[1]{\bigl\{ #1 \bigr\}}
\newcommand{\rst}[2]{\left.#1\right|_{#2}}

%%%%%%%%%%%%%%%%%%%%%%%%%%%%%%%%%%%%%%%%%%%%%%%%%%%%%%%%%%%%%%%%%%%%%%%%%%%%%%%%%%%%%%%%%%
%%%%%%%%%%%%%%%%%%%%%%%%%%%%%%%%%%%%%%%%%%%%%%%%%%%%%%%%%%%%%%%%%%%%%%%%%%%%%%%%%%%%%%%%%%
\begin{document}

\title{Specializing Koornwinder polynomials to %
 Macdonald polynomials of type $B,C,D$ and $B C$}
\author{Kohei Yamaguchi, Shintarou Yanagida}
%\date{January 22, 2021--} 
\date{May 3, 2021} 
\address{Graduate School of Mathematics, Nagoya University.
Furocho, Chikusaku, Nagoya, Japan, 464-8602}.
\email{d20003j@math.nagoya-u.ac.jp, yanagida@math.nagoya-u.ac.jp}
\thanks{S.Y.\ is supported by supported by JSPS KAKENHI Grant Number 19K03399, %
 and also by the JSPS Bilateral Program %
 ``Elliptic algebras, vertex operators and link invariant".}

\begin{abstract}
We study the specializations of parameters in Koornwinder polynomials to obtain 
Macdonald polynomials associated to the subsystems of the affine root system 
of type $(C_n^\vee,C_n)$ in the sense of Macdonald (2003), and summarize them 
in what we call the specialization table.
As a verification of our argument, we check the specializations to type $B,C$ and $D$
via Ram-Yip type formulas of non-symmetric Koornwinder and Macdonald polynomials. 
\end{abstract}

\maketitle
{\small \tableofcontents}

%%%%%%%%%%%%%%%%%%%%%%%%%%%%%%%%%%%%%%%%%%%%%%%%%%%%%%%%%%%%%%%%%%%%%%%%%%%%%%%%%%%%%%%%%%
%%%%%%%%%%%%%%%%%%%%%%%%%%%%%%%%%%%%%%%%%%%%%%%%%%%%%%%%%%%%%%%%%%%%%%%%%%%%%%%%%%%%%%%%%%
\section{Introduction}\label{s:intro}

In \cite{Mp}, Macdonald introduced families of multivariate $q$-orthogonal polynomials
associated to various root systems, which are today called the Macdonald polynomials.
Each family has additional $t$ parameters 
corresponding to the Weyl group orbits in the root system.
Following this work, in \cite{K}, Koornwinder introduced a multivariate analogue 
of Askey-Wilson polynomial, having additional five parameters aside from $q$,
which is today called the Koornwinder polynomial.
It was also shown in \cite{K} that by specializing these five parameters, 
we can obtain the Macdonald polynomials 
of type $(B C_n,B_n)$ and $(B C_n,C_n)$ in the sense of \cite{Mp}.
Today, these families of multivariate $q$-orthogonal polynomials 
are called the Macdonald-Koornwinder polynomials \cite{Cb,H,M,St3}.

After the development of the representation theoretic approach 
\cite{C1,C2,C3,C4,C5,N,Sa,Sa2,St,vD} using the (double) affine Hecke algebras, 
there appeared several versions of unified formulation 
of the Macdonald-Koornwinder polynomials \cite{Cb,H,M,St3}.
These studies are now called the Macdonald-Cherednik theory.

The specialization argument given by Koornwinder in \cite{K} 
is now understood in a more general form.
First, after the studies in \cite{N,Sa,Sa2,St,vD}, the Koornwinder polynomial can be 
formulated as the Macdonald polynomial associated to the affine root system 
of type $(C_n^\vee,C_n)$ in the sense of \cite{M}.
See also \cite{St2,St3} for the relevant explanation.
Then, as mentioned in \cite[p.12, (5.17)]{M}, the Macdonald polynomials associated to 
all the subsystems of type $(C_n^\vee,C_n)$ can be obtained by 
specializing the five parameters of the Koornwinder polynomial
in the way respecting the orbits of the extended affine Weyl group
acting on the affine root systems.
See also the comment in \cite[6.19]{H}.

However, it seems that the detailed explanation of the specialization argument 
is not given in literature.
The aim of this note is to clarify this point.

What troubled the authors in the early stages of the study is that there are 
tremendously many notations for the affine root systems and 
the parameters of Macdonald-Koornwinder polynomials,
and that even for the work \cite{Mp} and the book \cite{M} both by Macdonald,
there seems no explicit comparison in literature.
To the authors' best knowledge, in the present writing this note, 
the most general framework of the theory of Macdonald-Koornwinder polynomials
is given by Stokman \cite{St3}, which is based on the approach of Haiman \cite{H}.
It treats uniformly the four classes of Macdonald-Koornwinder polynomials:
$\GL_n$, the untwisted case, the twisted case, and the Koornwinder case.
The formulation by Macdonald in \cite{M} treats the latter three cases
along this classification.

Although it would be the best to work in the framework of \cite{St3},
we gave up to do so due to the following reasons.
First, since we are interested in the specialization of Koornwinder polynomials,
we may ignore $\GL_n$ case, and the formulation of \cite{M} will be enough.
Second, we are also motivated by Ram-Yip type formulas of non-symmetric 
Macdonald-Koornwinder polynomials \cite{RY,OS}, and will check our specialization 
argument in the level of those formulas. 
The calculations in the check are based on the recent paper \cite{YK}
by the second named author, which mainly follows the notation in \cite{M}.
Let us mention that some specialization arguments are 
given in \cite[Example 9.3.28, Remark 9.3.29]{St3}.

After these considerations, we decided to use the notation in the following literature:
\begin{enumerate}[nosep]
\item \label{i:intro:M}
\cite{M} for affine root systems. 
\item \label{i:intro:N}
\cite{N} for the parameters of Koornwinder polynomials.
\end{enumerate}

Let us explain \eqref{i:intro:M} in detail. We use the word ``affine root system" 
in the sense of \cite[\S 1.2]{M}, which originates in \cite{M1}. 
The word ``irreducible finite root system" means an irreducible root system in \cite{M}.
We also denote by $\vee$ the dualizing of finite and affine root systems.
Then, as explained in \cite[\S 1.3]{M}, similarity classes of irreducible 
affine root systems are divided into three cases:
\begin{itemize}[nosep]
\item 
reduced and of the form $S(R)$ with $R$ an irreducible finite root system.
\item 
reduced and of the form $S(R)^\vee$ with $R$ an irreducible finite root system.
\item
non-reduced and of the form $S_1 \cup S_2$ with $S_1$ and $S_2$ reduced affine root systems.
\end{itemize}
The appearing $R$ is one of the types 
$A_n$, $B_n$, $C_n$, $D_n$, $BC_n$, $E_6$, $E_7$, $E_8$, $F_4$ and $G_2$.
%the following types: \begin{align}
% A_n, \ B_n, \ C_n, \ D_n, \ BC_n, \ E_6, \ E_7, \ E_8, \ F_4, \ G_2.
%\end{align}
According to the type of $R$, we say
\begin{itemize}[nosep]
\item 
$S(R)$ is of type $X$ if $R$ is of type $X$,
\item 
$S(R)^\vee$ is of type $X^\vee$ if $R$ is of type $X$,
\item
a non-reduced system $S_1 \cup S_2$ is of type $(X,Y)$ 
if $S_1$ and $S_2$ are of type $X$ and $Y$, respectively.
\end{itemize}
We refer \cite[(1.3.1)--(1.3.18))]{M} for explicit descriptions of 
these irreducible affine root systems,
although some of them will be displayed in the main text.

As explained in \cite[\S 1.4]{M}, Macdonald developed a unified formulation to 
associate a family of $q$-orthogonal polynomials to each of 
the following pairs $(S,S')$ of irreducible affine root systems.
\begin{enumerate}[nosep, label=(\alph*)]
\item \label{i:intro:r}
$(S,S')=(S(R),S(R^\vee))$ with $R$ an irreducible finite root system.
\item \label{i:intro:rv}
$S=S'=S(R)^\vee$ with $R$ an irreducible finite root system.
\item \label{i:intro:nr}
$S=S'$ is non-reduced of type $(X,Y)$.
\end{enumerate}
%Note that we should distinguish $S(R^\vee)$ and $S(R)^\vee$.
For each pair $(S,S')$, we have the associated non-symmetric \cite[\S 5.2]{M}
and symmetric \cite[\S 5.3]{M} Macdonald polynomials.
For the reference in the main text, let us introduce:

\begin{dfn}\label{dfn:type}
We call the non-symmetric and symmetric Macdonald polynomials associated to 
$(S,S')$ in the class \ref{i:intro:r}, \ref{i:intro:rv} and \ref{i:intro:nr}
\emph{the non-symmetric and symmetric Macdonald polynomials of 
type $X$, $X^\vee$ and $(X,Y)$}, respectively.
\end{dfn}

In particular, the Koornwinder polynomial is the Macdonald polynomial 
of type $(C_n^\vee,C_n)$.
The detail of the affine root system of type $(C_n^\vee,C_n)$ 
will be explained in \S \ref{ss:CvC}.

As mentioned before, in \cite[p.12]{M}, Macdonald gives a comment that 
the affine root system of type $(C_n^\vee,C_n)$ 
has as its subsystem all the non-reduced affine root systems and 
the classical affine root systems of type 
$B_n$, $B_n^\vee$, $C_n$, $C_n^\vee$, $BC_n$ and $D_n$. 
Also, at \cite[(5.17)]{M}, he comments that an appropriate specialization of 
parameters in the Koornwinder polynomials yields the Macdonald polynomials 
associated to the corresponding subsystem.
Let us state again that the aim of this note is to clarify this point.

Let us turn to the explanation of \eqref{i:intro:N}, 
the notation of the five parameters of the Koornwinder polynomial. 
We will use those introduced by Noumi in \cite{N}:
\begin{align}\label{eq:intro:N}
  t,t_0,t_n,u_0,u_n.
\end{align}
Let us call them \emph{the Noumi parameters} for distinction. 
The details will be explained in \S \ref{ss:HCC}.

%Macdonald gives two versions of the theory.
%The first version uses admissible pairs, 
%which was presented in the original work \cite{Mp} in the late 1980s.
%An admissible pair is a pair $(R,S)$ of finite root systems.
%In contrast, the second one uses pairs affine root systems,
%which was given in the book \cite{M} to encompass Koornwinder polynomials.
%Since we are interested in Koornwinder polynomials, 
%our study is based on the second version in \cite{M}.

Now we can explain the main result of this note.

\begin{mth}[{Propositions \ref{prp:C}, \ref{prp:B}--\ref{prp:BvB}}]\label{thm:1}
For each type $X$ listed in Table \ref{tab:sp} and 
for each (not necessarily) dominant weight $\mu$ of type $C_n$, 
the specialization of the Noumi parameters 
in the (non-symmetric) Koornwinder polynomial with weight $\mu$
yields the (non-symmetric) Macdonald polynomial with $\mu$ of type $X$
in the sense of Definition \ref{dfn:type}.

\begin{table}[htbp]
\centering
\begin{tabular}{ll||lllll|ll||lllll}
\multicolumn{2}{c||}{reduced}     & $t$   & $t_0$  & $t_n$    & $u_0$ & $u_n$ &
\multicolumn{2}{c||}{non-reduced} & $t$   & $t_0$  & $t_n$    & $u_0$ & $u_n$ \\ \hline
            $B_n$& \S \ref{ss:B}  & $t_l$ & $1$    & $t_s$    & $1$   & $t_s$ &
     $(BC_n,C_n)$& \S \ref{ss:BCC}& $t_m$ & $t_l^2$& $t_s t_l$& $1$   & $t_s/t_l$ \\
       $B^\vee_n$& \S \ref{ss:Bv} & $t_s$ & $1$    & $t_l^2$  & $1$   & $1$   &
$(C^\vee_n,BC_n)$& \S \ref{ss:CBC}& $t_m$ & $ t_s$ & $t_s t_l$& $t_s$ & $t_s/t_l$ \\
            $C_n$& \S \ref{ss:C}  & $t_s$ & $t_l^2$& $t_l^2$  & $1$   & $1$   &
 $(B^\vee_n,B_n)$& \S \ref{ss:BB} & $t_m$ & $1$    & $t_s t_l$& $1$   & $t_s/t_l$ \\
       $C^\vee_n$& \S \ref{ss:Cv} & $t_l$ & $t_s$  & $t_s$    & $t_s$ & $t_s$ \\
           $BC_n$& \S \ref{ss:BC} & $t_m$ & $t_l^2$& $t_s$    & $1$   & $t_s$ \\
            $D_n$& \S \ref{ss:D}  & $t$   & $1$    & $1$      & $1$   & $1$
\end{tabular}
\caption{Specialization table}
\label{tab:sp}
\end{table}
\end{mth}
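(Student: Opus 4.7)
The plan is to prove Theorem \ref{thm:1} row by row, with each row corresponding to one of Propositions \ref{prp:C}, \ref{prp:B}--\ref{prp:BvB}. The unifying idea is that the Macdonald-Koornwinder polynomials in each case are characterized by eigenvector properties with respect to Cherednik/Macdonald operators, whose parameters correspond to orbits of the extended affine Weyl group $\wt W$ on the affine root system. Matching the orbit data of each subsystem with that of type $(C_n^\vee, C_n)$ reads off the required specialization of the Noumi parameters.

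First, I would fix the explicit realization of the affine root system of type $(C_n^\vee, C_n)$ following \cite{N}, and list the five $\wt W$-orbits corresponding to $t, t_0, t_n, u_0, u_n$, as will be done in \S \ref{ss:HCC}. For each type $X$ in Table \ref{tab:sp}, I would identify the affine root system of type $X$ as a subsystem of that of type $(C_n^\vee, C_n)$, decompose it into orbits under the corresponding extended affine Weyl group $\wt W_X$, and match these orbits with the $\wt W$-orbits of the Koornwinder system. The matching dictates which Noumi parameters must coincide and which must be set to $1$; the corresponding row of Table \ref{tab:sp} records precisely this matching.

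Second, to make the identification rigorous, I would exploit the fact that the non-symmetric Macdonald polynomial at weight $\mu$ is uniquely characterized (up to normalization) as the simultaneous eigenvector of the Cherednik $Y$-operators with a prescribed triangularity property. Under the parameter specialization, the Cherednik operators for Koornwinder reduce to those for type $X$, and the triangularity is preserved; hence the Koornwinder eigenvector at $\mu$ specializes to the type-$X$ eigenvector at $\mu$. The symmetric case then follows by the standard Hecke-theoretic symmetrization, which commutes with specialization. As an independent verification, I would invoke the Ram-Yip type formulas of \cite{RY, OS} in the form of \cite{YK}: the local weight of each foldable step depends on the crossed affine root only through its $\wt W$-orbit, so specializing the parameters as in the table transforms the Koornwinder Ram-Yip sum term by term into that for type $X$. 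As the abstract indicates, this verification is carried out explicitly for types $B_n$, $C_n$ and $D_n$.

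The main obstacle will be the careful bookkeeping that reconciles the conventions of \cite{M} with those of \cite{N}, in particular the half-integer shifts and root-length rescalings that distinguish $S(R)$ from $S(R)^\vee$, together with the handling of types where some Noumi parameters specialize to $1$, so that entire classes of affine roots become effectively invisible in the Ram-Yip sum and the corresponding local factors must collapse to the identity. A second delicate point concerns the non-reduced types $(BC_n, C_n)$, $(C_n^\vee, BC_n)$ and $(B_n^\vee, B_n)$: the target affine root system is itself a union $S_1 \cup S_2$, and one must simultaneously match the orbit data of both $S_1$ and $S_2$ with a single specialization of the five Noumi parameters, which forces the appearance of the middle-length parameter $t_m$ visible in the last three rows of the table.
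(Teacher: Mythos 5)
Your first step --- realizing each type $X$ as a subsystem of the affine root system of type $(C_n^\vee,C_n)$ and matching the orbits of the extended affine Weyl groups to read off the parameter specialization --- is exactly the combinatorial core of the paper's argument, including the observation that the non-reduced types force the extra parameter $t_m$. The gap is in the mechanism you propose to make this rigorous. You assert that under the specialization the Cherednik $Y$-operators of type $(C_n^\vee,C_n)$ reduce to those of type $X$, so that the eigenvector-plus-triangularity characterization transfers. This is precisely the route the paper flags as problematic: Remarks \ref{rmk:C} and \ref{rmk:B} show that the specialization sends $\beta(T_j)$ to $\beta^X(T_j)$ only for $1\le j\le n-1$, while the images of $T_0$ and $T_n$ do \emph{not} match, and since the $Y$-operators are products of the $T_i$ along reduced words for translations, the authors explicitly state that it is unclear whether the specialization can be seen on the level of the affine Hecke algebras. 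A further obstruction you do not address is that the translation lattices differ: for types $C_n$ and $C_n^\vee$ the commutative subalgebra is generated by $Y^{\lambda'}$ with $\lambda'\in P_{B_n}\supsetneq P_{C_n}$, so the two eigenvalue problems are not even posed for the same family of operators.

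The paper instead uses the other uniqueness characterization: the (non-symmetric) Macdonald--Koornwinder polynomials are determined by orthogonality with respect to the weight function $\Delta_{S,k}$ of \eqref{eq:Delta} together with triangularity, and Macdonald's degeneration formula \eqref{eq:Delta:sp} says that imposing $k(a)-k(2a)=0$ for $a\notin S^0$ collapses $\Delta_{S,k}$ to $\Delta_{S^0,k}$ for each subsystem $S^0$. Translating this condition through \eqref{eq:MN} into the Noumi parameters is then exactly the orbit bookkeeping of your first paragraph and produces each row of the table; see \S \ref{ss:C} for the model case. To salvage your operator-theoretic route you would have to prove directly that the specialized $Y$-operators coincide with the type-$X$ ones despite the mismatch at $T_0,T_n$, which the paper leaves open. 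Your Ram--Yip verification is sound in outline but is only a check for three types and requires nontrivial bijections between sets of alcove walks over different extended affine Weyl groups; note also that the Ram--Yip type $C_n$ polynomial turns out to be Macdonald's type $B_n^\vee$ rather than $C_n$ (Table \ref{tab:sp:RY}), so this check cannot substitute for the weight-function argument.
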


Hereafter we refer Table \ref{tab:sp} as the \emph{specialization table}.
%In the top line, we display the Noumi parameters.
%In each row of type $X$, we give the way to specialize the Noumi parameters to obtain 
%the Macdonald polynomial of type $X$ from the Koornwinder polynomial.

Let us explain how to read Theorem \ref{thm:1} and the specialization Table \ref{tab:sp}
in the case of type $C_n$.
The associated Macdonald polynomial has the parameters $q$ and two kinds of $t$'s.
The latter correspond to the two orbits of the extended affine Weyl group 
acting on the affine root system of type $C_n$, and we denote them by $t_s$ and $t_l$.
%where the letters $s$ and $l$ indicate the short and long roots, respectively.
Using them, we denote the symmetric Macdonald polynomial of type $C_n$ 
by $P_\mu^C(x;q,t_s,t_l)$ with dominant weight $\mu$. 
See \S \ref{ss:C} for the detail of these symbols for type $C_n$.
We also have the Koornwinder polynomial $P_\mu(x;q,t,t_0,t_n,u_0,u_n)$ 
with the same dominant weight, whose detail will be explained in \S \ref{ss:HCC}.
Then, specializing the Noumi parameters as indicated in the type $C_n$ row 
in Table \ref{tab:sp}, we obtain $P_\mu^C(x;q,t_s,t_l)$. 
In other words, the following identity holds.
\begin{align}\label{eq:intro:PC}
 P_\mu^C(x;q,t_s,t_l) = P_\mu(x;q,t_s,t_l^2,t_l^2,1,1).
\end{align}
See Proposition \ref{prp:C} for the detail of type $C_n$.

We derive each of the specializations in \S \ref{ss:C} and \S \ref{ss:oth}, 
as indicated in the specialization Table \ref{tab:sp}.
Our argument is based on the fact that each family of Macdonald-Koornwinder polynomials is 
uniquely determined by the inner product.
Thus, the desired specialization will be obtained by studying the degeneration of 
the weight function of the inner product,
which is actually described in the formula \cite[(5.1.7)]{M}.
See \eqref{eq:Delta:sp} for the precise statement.
As commented at \cite[(5.1.7)]{M}, all we have to do is to 
take care the correspondence of the orbits of the extended affine Weyl group.

In \S \ref{s:RY}, as a verification of the specializing Table \ref{tab:sp}, 
we check the obtained specializations by using 
explicit formulas of Macdonald-Koornwinder polynomials.
We focus on \emph{Ram-Yip type formulas} \cite{RY,OS} which were mentioned before.
These formulas give explicit description of the coefficients in the monomial expansion
of non-symmetric Macdonald-Koornwinder polynomials as a summation of terms over 
the so-called \emph{alcove walks}, the notion introduced by Ram \cite{R}.
We do this check for Ram-Yip formulas of type $B,C$ and $D$ in the sense of \cite{RY}.
The check is done just in case-by-case calculation, 
but since the situation is rather complicated due to the notational problem of
affine root systems and parameters, we believe that it has some importance.
The result is as follows.

\begin{mth}[{Propositions \ref{prp:RY:B}, \ref{prp:RY:C} and \ref{prp:RY:D}}]
For each $\mu \in P_{C_n}$, we have 
\begin{align}
 E_\mu(x;q,t_m^{\RY},1,t_l^{\RY},1,t_l^{\RY})&=E_\mu^{B,\RY}(x;q,t_m^{\RY},t_l^{\RY}), \\
 E_\mu(x;q,t_m^{\RY},1,t_s^{\RY},1,1) &= E_\mu^{C,\RY}(x;q,t_s^{\RY},t_m^{\RY}), \\
 E_\mu(x;q,t,1,1,1,1) &= E_\mu^{D,\RY}(x;q,t).
\end{align}
\end{mth}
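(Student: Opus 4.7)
The plan is to verify each identity by comparing both sides term-by-term as sums over alcove walks provided by the Ram-Yip type formulas recalled in Section~\ref{s:RY}. The Ram-Yip formula for the non-symmetric Koornwinder polynomial $E_\mu$, developed in \cite{YK}, expresses it as a weighted sum indexed by alcove walks in the extended affine Weyl group associated to $(C_n^\vee,C_n)$, with a local factor attached to each fold that depends polynomially on the Noumi parameters $(t,t_0,t_n,u_0,u_n)$. Analogously, for $X \in \{B,C,D\}$, the formula from \cite{RY} expresses $E_\mu^{X,\RY}$ as a sum over alcove walks in the extended affine Weyl group of type $X$, with local factors depending on the type-$X$ parameters.

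First I would fix a reduced expression for a chosen representative of $\mu$ in the extended affine Weyl group (for instance, the shortest element sending the fundamental alcove to an alcove with translation part $\mu$). Because $\mu \in P_{C_n}$ lies in the weight lattice common to $(C_n^\vee,C_n)$ and to each reduced affine root system of type $B_n$, $C_n$, $D_n$, this reduced expression can be chosen so that the sequences of simple affine reflections used on the two sides agree. Consequently, the index sets of alcove walks on the two sides are in natural bijection: a fold at position $i$ on the Koornwinder side corresponds to a fold at position $i$ on the type-$X$ side, and the monomial endpoint contributions $x^{\,\cdot}$ coincide.

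The core step is then to check that, under the specialization prescribed in Table~\ref{tab:sp}, each local factor on the Koornwinder side collapses to the corresponding local factor on the type-$X$ side. For each simple affine root $\alpha_i^{(C_n^\vee,C_n)}$ with $0 \le i \le n$, one reads off from the Ram-Yip formula which Noumi parameter(s) appear in the associated local factor, substitutes the values from the relevant row of Table~\ref{tab:sp}, and compares with the local factor attached to the corresponding simple affine root of type $X$. The positions where the specialization sets a Noumi parameter to $1$ correspond to affine roots of $(C_n^\vee,C_n)$ that lie outside the subsystem, so their fold contributions become trivial; the positions where the specialization is a nontrivial monomial in $t_s,t_l,t_m$ must reproduce exactly the type-$X$ local factor.

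The main obstacle is not conceptual but notational. The Koornwinder local factor at a boundary simple affine root (i.e.\ $i=0$ or $i=n$) is built from all five Noumi parameters, whereas the type-$X$ local factor at the corresponding boundary root involves only one or two parameters; correctly tracking how the five-parameter expression degenerates for each of the three specializations requires careful case-by-case bookkeeping, and is further complicated by the differing conventions for the extra generator of the extended affine Weyl group in types $B_n$, $C_n$, $D_n$ versus $(C_n^\vee,C_n)$. Once the three rows of substitutions are verified at the level of local factors for all $0 \le i \le n$, matching term-by-term along the common indexing set of alcove walks yields the three claimed identities.
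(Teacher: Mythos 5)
Your overall strategy --- match the two Ram--Yip formulas walk-by-walk after specializing the Noumi parameters --- is the right one, and for type $B$ it works exactly as you describe: there $W$ embeds into $W^{B^\vee,\RY}$ with the simple reflections literally coinciding, so $\Gamma(\oa{w(\mu)},e)=\Gamma_B(\oa{w(\mu)},e)$ and only the local factors $\psi_i^{\pm}$ need to be compared (Lemmas \ref{lem:WBhom} and \ref{lem:SpBRY}). But your key assumption for the other two cases --- that the reduced expressions on the two sides can be chosen so that ``the sequences of simple affine reflections agree,'' and hence that folds correspond position-by-position --- fails for types $C^{\RY}$ and $D$. In those cases $w_X(\mu)$ is strictly shorter than $w(\mu)$: the comparison map $\vp^C\colon W \to W^{C^\vee,\RY}$ sends $s_0$ to the length-zero diagram automorphism $\pi^{C^\vee}$, and $\vp^D\colon W\to W^D$ sends $s_0\mapsto \pi_1^D$ and $s_n\mapsto e$, so whole letters of the reduced word disappear and the surviving letters are relabelled by conjugation with the diagram automorphism. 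Moreover the simple affine roots themselves differ (e.g.\ $a_0^{C^\vee,\RY}=-(\ep_1+\ep_2)+c$ versus $a_0=-2\ep_1+c$), so the alcove geometries on the two sides are not literally the same.

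The mechanism that rescues the argument is also not the one you invoke. It is not that the fold contributions at the discarded positions ``become trivial'' (i.e.\ equal to $1$); rather, the specializations $u_0=u_n=1$ (resp.\ $t_0=t_n=u_0=u_n=1$) make $\psi_0^{\pm}$ (resp.\ both $\psi_0^{\pm}$ and $\psi_n^{\pm}$) identically \emph{zero}, so every alcove walk that folds at such a position contributes nothing and drops out of the sum entirely; only the walks crossing at all those positions survive (Lemmas \ref{lem:SpCRY} and \ref{lem:SpDRY}). One must then construct an explicit bijection from this proper subset $\Gamma_0(\oa{w(\mu)},e)$ (resp.\ $\Gamma_{0,n}(\oa{w(\mu)},e)$) onto $\Gamma_C(\oa{w_C(\mu)},e)$ (resp.\ $\Gamma_D(\oa{w_D(\mu)},e)$) by deleting the crossing steps at the dead positions, and separately verify that $\wgt$, $\dir$ and the remaining local factors are preserved (Lemmas \ref{lem:isom} and \ref{lem:isomD}). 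Without the vanishing of these $\psi$-factors and the deletion bijection, the two sums do not even have the same number of terms, so the position-by-position matching you propose cannot be carried out as stated.
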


Here the left hand sides denote specializations of the non-symmetric Koornwinder 
polynomials $E_\mu(x)$, and the right hand side denotes the non-symmetric Macdonald 
polynomials of type $B,C$ and $D$ in the sense of \cite{RY}.
For the detail, see the beginning of \S \ref{s:RY} for the explanation.
Comparing these identities with the specialization Table \ref{tab:sp},
we find that $E_\mu^{B,\RY}(x)$ is equivalent to the polynomial of type $B_n$,
$E_\mu^{C,\RY}(x)$ is to that of type $C_n^\vee$, and $E_\mu^{D,\RY}(x)$ is to 
that of type $D_n$ in the sense of Definition \ref{dfn:type},

%%%%%%%%%%%%%%%%%%%%%%%%%%%%%%%%%%%%%%%%%%%%%%%%%%%%%%%%%%%%%%%%%%%%%%%%%%%%%%%%
\subsubsection*{Notation and terminology}

Here are the notations and terminology used throughout in this paper. 
\begin{itemize}[nosep]
\item
We denote by $\bZ$ the ring of integers,
by $\bN = \bZ_{\ge 0} := \{0,1,2,\ldots\}$ the set of non-negative integers,
by $\bQ$ the field of rational numbers, and  by $\bR$ the field of real numbers.

\item
We denote by $e$ the unit of a group.

%\item
%For a commutative ring $k$ and a family of commutative variants $x=(x_1,x_2,\ldots)$,
%we denote by $k[x^{\pm1}]$ the Laurent polynomial ring $k[x_1^{\pm1},x_2^{\pm1},\ldots]$.
\end{itemize}

%%%%%%%%%%%%%%%%%%%%%%%%%%%%%%%%%%%%%%%%%%%%%%%%%%%%%%%%%%%%%%%%%%%%%%%%%%%%%%%%%%%%%%%%%%
%%%%%%%%%%%%%%%%%%%%%%%%%%%%%%%%%%%%%%%%%%%%%%%%%%%%%%%%%%%%%%%%%%%%%%%%%%%%%%%%%%%%%%%%%%
\section{Specialization table of Koornwinder polynomials}\label{s:table}

The aim of this section is to give the detail of the specialization Table \ref{tab:sp}.
As explained in \S \ref{s:intro}, we use the affine root systems 
in the sense of Macdonald \cite{M1,M}.
Our main system is that of type $(C_n^\vee,C_n)$, which will be denoted by $S$.
See \eqref{eq:S} for the precise definition.
According to the list of affine root systems in \cite[\S 1.3]{M},
those in Table \ref{tab:sp} are subsystems of $S$.
Explicitly, the following types are the subsystems of type $(C_n^\vee,C_n)$.
\begin{align}\label{eq:subsys}
  B_n, \ B_n^\vee, \ C_n, \ C_n^\vee, \ D_n \ BC_n, \ 
 (BC_n,C_n), \ (C_n^\vee,BC_n), \ (B_n^\vee,B_n).
\end{align}
The details of these subsystems will be explained in \S \ref{ss:C} and \S \ref{ss:oth}.
%and thus we can obtain the associated Macdonald polynomials
%by some appropriate specialization of parameters in the Koornwinder polynomials.

%%%%%%%%%%%%%%%%%%%%%%%%%%%%%%%%%%%%%%%%%%%%%%%%%%%%%%%%%%%%%%%%%%%%%%%%%%%%%%%%%%%%%%%%%%
\subsection{Affine root system of type $(C_n^\vee,C_n)$}\label{ss:CvC}

Here we introduce the notation for the affine root system of type $(C_n^\vee,C_n)$
in the sense of Macdonald \cite{M}, following Chapter 1 of loc.\ cit.

Let $n \in \bZ_{\ge 2}$, and $E$ be the $n$-dimensional Euclidean space 
with inner product $\br{\cdot, \cdot}$.
We take and fix an orthonormal basis $\{\ep_i \mid i=1,2,\dotsc,n\}$ of $E$.
Thus, we may identify $E=(V, \br{\cdot,\cdot})$ with $V=\oplus_{i=1}^n \bR \ep_i$.
Let $F$ be the $\bR$-linear space of affine linear functions $E \to \bR$.
The inner product $\br{\cdot,\cdot}$ yields the isomorphism $F \sto V \oplus \bR c$, 
where $c$ is the constant function $c(v)=1$ for any $v \in V$.
Hereafter we identify $F$ and $V \oplus \bR c$ by this isomorphism.

We denote by $S$ the affine root system of type $(C_n^\vee,C_n)$ in the sense of 
\cite[\S 1.3, (1.3.18)]{M}. Thus, $S$ is a subset of $F=V \oplus \bR c$ given by 
\begin{align}\label{eq:S}
\begin{split}
&  S  = O_1 \sqcup O_2 \sqcup \dotsb \sqcup O_5, \\
&O_1 := \{\pm \ep_i + r c \mid 1 \le i \le n, \, r \in \bZ\}, \ 
 O_2 := 2 O_1, \ 
 O_3 := O_1+\thf c, \ 
 O_4 := 2 O_3 = O_2 + c, \\
&O_5 := \{\pm \ep_i \pm \ep_j + r c \mid 1 \le i < j \le n, \, r \in \bZ\}.
\end{split}
\end{align}
An element of $S$ is called an affine root, or just a root.
Following the choice of \cite{YK}, we consider the affine roots
\begin{align}\label{eq:ai}
 a_0 := -2\ep_1+c, \quad 
 a_j :=  \ep_j-\ep_{j+1} \ (1 \le j \le n-1), \quad 
 a_n :=  2\ep_n.
\end{align}
Thy form a basis of $S$ in the sense of \cite[\S 1.2]{M}. Obviously we have 
\begin{align}
 \thf a_0 \in O_3, \quad a_0 \in O_4, \quad a_j \in O_5 \quad (1 \le j \le n-1), \quad
 \thf a_n \in O_1, \quad a_n \in O_2.
\end{align}
Below is the Dynkin diagram cited from \cite[(1.3.18)]{M}.
The mark $*$ above the index $i$ implies that $a_i, \hf a_i \in S$.
\begin{align}
 \dynkin[mark=o, edge length=.75cm, reverse arrows,
         labels={0,1,2,,n-1,n}, labels*={*,,,,,*}] C[1]{}
\end{align}
In fact, the description \eqref{eq:S} gives the orbit decomposition of $S$ 
by the action of the extended affine Weyl group. 
For the explanation, we need to introduce more symbols.

The inner product $\br{\cdot,\cdot}$ on $V$ is extended to $F=V \oplus \bR c$ by
\begin{align}
 \br{v+r c,w+s c} := \br{v,w}, \quad v,w \in V, \quad r,s \in \bR.
\end{align}
For a non-constant function $f \in F \setminus \bR c$, 
we define $s_f \in \GL_{\bR}(F)$ by 
\begin{align}
 F \ni g \lmto s_f(g) := g-\br{g,f^\vee}f, \quad f^\vee := \tfrac{2}{\br{f,f}}f.
\end{align}
It is the reflection with respect to the hyperplane $H_f:=f^{-1}(\pr{0}) \subset V$.
%Hereafter we use the notation for root systems and Weyl groups in \cite[1.2, 1.4]{M}.
Now we consider the subset 
\begin{align}\label{eq:R}
 R := \pr{\pm  \ep_i \pm \ep_j \mid 1 \le i < j \le n} \cup 
      \pr{\pm 2\ep_i \mid 1 \le i \le n} \subset S \cap V,
\end{align}
which is in fact the finite root system of type $C_n$.
Among the affine roots $a_i$ in \eqref{eq:ai}, those except $a_0$ belong to $R$,
which are the simple roots of type $C_n$.
Then the finite Weyl group $W_0$ is the subgroup 
\begin{align}\label{eq:W0}
 W_0 := \langle s_i \  (i=1,2,\dotsc,n) \rangle \subset \GL_{\bR}(V), \quad 
 s_i := s_{a_i}.
\end{align}
Note that each element in $W_0$ is an isometry for the inner product $\br{\cdot,\cdot}$.

Next, for $v \in V$, we define $t(v) \in \GL_\bR(F)$ by 
\begin{align}\label{eq:t(v)}
 F \ni f \lmto t(v)(f) := f-\br{f,v}c.
\end{align}
Then, for $w \in W_0$, we have
\begin{align}\label{eq:w.t}
 w \, t(v) \, w^{-1} = t(w v).
\end{align}
Let $P_{C_n} \subset F$ be given by 
\begin{align}\label{eq:PCn}
 P_{C_n} := \bZ\ep_1 \oplus \bZ\ep_2 \oplus \dotsb \oplus \bZ\ep_n,
\end{align}
which is in fact the weight lattice of the finite root system of type $C_n$. Then, 
\begin{align}\label{eq:tPCn}
 t(P_{C_n}) := \pr{t(\mu) \mid \mu\in P_{C_n}} \subset \GL_\bR(F)
\end{align}
is isomorphic to the additive group $P_{C_n}$.
Viewing \eqref{eq:w.t} as an action of $W_0$ on $t(P_{C_n})$, 
we can take the semigroup of \eqref{eq:W0} and \eqref{eq:tPCn} to obtain
the extended affine Weyl group $W$ of type $(C_n^\vee,C_n)$:
\begin{align}\label{eq:W}
 W := t(P_{C_n}) \rtimes W_0 \subset \GL_{\bR}(F).
\end{align}
It acts on $S$ by permutation \cite[(1.4.6), (1.4.7)]{M},
and the orbits are given in \eqref{eq:S} \cite[1.5]{M}.

Let us also give a description of $W$ as an abstract group. We set 
\begin{align}\label{eq:s0}
 s_0 := t(\ep_1)s_{2\ep_1} \in W.
\end{align}
Then $W$ has a presentation with generators 
\begin{align}\label{eq:W:gr}
 W = \br{s_0,s_1,\dotsc, s_n}
\end{align}
and the following relations.
\begin{align}\label{eq:Wrel}
\begin{split}
 s_i^2 = 1                                         \quad &(0 \le i \le n),   \\
 s_i s_j = s_j s_i                                 \quad &(\abs{i-j}>1),     \\
 s_j s_{j+1} s_j = s_{j+1} s_j s_{j+1}             \quad &(1 \le j \le n-2), \\
 s_i s_{i+1} s_i s_{i+1} = s_{i+1} s_i s_{i+1} s_i \quad &(i=0,n-1).
 \end{split}
\end{align}
Hereafter the length $\ell(w)$ of $w \in W$ indicates that 
for a reduced expression in terms of the generators $\{s_i \}_{i=0}^n$.
For later use, we write down a reduced expression of $t(\ep_i)$:
\begin{align}\label{eq:tep}
 t(\ep_i) = s_{i-1} s_{i-2} \dotsm s_1 s_0 s_1 s_2 \dotsm 
            s_n s_{n-1} s_{n-2} \dotsm s_{i+1} s_i \quad (1 \le i \le n).
\end{align}

Let us also introduce $F_\bZ \subset F$ by 
\begin{align}\label{eq:FbZ}
 F_\bZ := P_{C_n} \oplus \thf\bZ c.
\end{align}
Then we have $S \subset F_\bZ$.
We write down the action of $W$ on $F_\bZ$:
\begin{align}
&s_0(\ep_i)=\begin{cases} c-\ep_1 & (i=1)  \\ \ep_i  & (i \neq 1)\end{cases}, & 
&s_j(\ep_i)=\begin{cases}   \ep_j & (i=j+1)\\ \ep_j+1& (i   =  j) \\ 
                            \ep_i & (i \neq j, j+1) \end{cases} \quad (1 \le j \le n-1),\\
&s_n(\ep_i)=\begin{cases}  -\ep_n & (i=n)  \\ \ep_i  & (i \neq n)\end{cases}, & 
&s_k(c)=c \quad (0 \le k \le n).
\end{align}
By these formulas, we can check the orbit decomposition \eqref{eq:S} directly.

Closing this part, we recall the positive and negative parts of $S$. Let us write $S$ as 
\begin{align}
 S = \{\pm \ep_i+\tfrac{1}{2}r c, \, \pm 2\ep_i+r c \mid 1 \le i \le n, \, r \in \bZ \}
\cup \{\pm \ep_i \pm \ep_j + r c                \mid 1 \le i < j \le n, \, r \in \bZ \}.
\end{align}
It has the decomposition $S=S^+ \sqcup S^-$ with the sets $S^\pm$ of 
positive and negative roots, respectively.
To describe $S^\pm$, let us recall the decomposition of the finite root system $R$ 
of type $C_n$ (see \eqref{eq:R}) into positive and negative roots:
\begin{align}
 R = R_+\sqcup R_-, \quad 
 R_+ := \pr{2\ep_i \mid 1 \le i \le n} \cup 
        \pr{ \ep_i \pm \ep_j \mid 1 \le i < j \le n } \subset R, \quad 
 R_-:=-R_+.
\end{align}
Then, the sets $S_\pm$ are given by 
\begin{align}
 S^+:=\{\alpha+r c, \, \alpha^\vee+\thf r c \mid \alpha \in R_+, \ r \in \bN\} \cup
      \{\alpha+r c, \, \alpha^\vee+\thf r c \mid \alpha \in R_-, \ r \in \bN\},\quad 
 S^-:=-S^+.
\end{align}
Using \eqref{eq:ai}, we have $a_i \in S^+$ for each $i=0,1,\dotsc,n$. Moreover we have
\begin{align}\label{eq:S^+}
 S^+ = \sum_{i=0}^n \bN a_i \setminus \{0\}.
\end{align}
We also define $\wt{R}, \wt{R}_{\pm} \subset S$ by 
\begin{align}\label{eq:wtR}
 \wt{R} := \wt{R}_+\sqcup \wt{R}_-, \quad 
 \wt{R}_+ := \pr{\ep_i,   2\ep_i \mid 1 \le i \le n} \cup 
             \pr{\ep_i \pm \ep_j \mid 1 \le i < j \le n },\quad
 \wt{R}_- := -\wt{R}_+.
\end{align}
Then, any $a \in S$ can be presented as $a=\alpha+r c$ 
with $\alpha \in \wt{R}$ and $r \in \thf \bZ$, and we denote 
\begin{align}\label{eq:ola}
 \ol{a} := \alpha \in \wt{R}.
\end{align}

%%%%%%%%%%%%%%%%%%%%%%%%%%%%%%%%%%%%%%%%%%%%%%%%%%%%%%%%%%%%%%%%%%%%%%%%%%%%%%%%%%%%%%%%%%
\subsection{Parameters, weight function and Koornwinder polynomials}
\label{ss:HCC}

In this subsection, we explain the parameters and the weight function for type 
$(C_n^\vee,C_n)$, and introduce the symmetric and non-symmetric Koornwinder polynomials.
As for the parameters of Koornwinder polynomials, 
we mainly use \emph{the Noumi parameters} in \cite{N}, as mentioned in \S \ref{s:intro}.
Due to the necessity in the specialization argument,
we also give a summary of the comparison of the Noumi parameters with those 
given by Macdonald in \cite{M}, which we will refer as \emph{the Macdonald parameters}.

We begin with explanation on the parameters in \cite{M}.
Let us write again the $W$-orbits \eqref{eq:S} in  $S=O_1 \sqcup \dotsb \sqcup O_5$ 
and the affine roots $a_i$ in \eqref{eq:ai}:
\begin{align}
&O_1 := \{\pm \ep_i + r c \mid 1 \le i \le n, \, r \in \bZ\}, \quad 
 O_2 := 2 O_1, \quad 
 O_3 := O_1+\thf c, \quad 
 O_4 := 2 O_3 = O_2 + c, \\
&O_5 := \{\pm \ep_i \pm \ep_j + r c \mid 1 \le i < j \le n, \, r \in \bZ\}. \\
&a_0 := -2\ep_1+c, \quad 
 a_j :=  \ep_j-\ep_{j+1} \ (1 \le j \le n-1), \quad 
 a_n :=  2\ep_n, \\
&\thf a_0 \in O_3, \quad a_0 \in O_4, \quad a_j \in O_5 \quad (1 \le j \le n-1), \quad
 \thf a_n \in O_1, \quad a_n \in O_2.
 %\label{eq:ainO:app}
\end{align}
We attach a parameter $k_r \in \bR$ to each $W$-orbit as
\begin{align}\label{eq:kO}
 k_r \lrto O_r \quad (r=1,2,\dotsc,5),
\end{align}
and define the label $k$ \cite[\S 1.5]{M} as a map on given by 
\begin{align}\label{eq:k}
 k: S \lto \bR, \quad k(a) := k_r \quad \text{for } a \in O_r.
\end{align}
Let $q \in \bR$ be chosen, and define the set of parameters as
\begin{align}\label{eq:q^k}
 \{q^{k(a)} \mid a \in S\} = \{q^{k_1},q^{k_2},\dotsc,q^{k_5}\}.
\end{align}
We call $q^{k_r}$'s \emph{the Macdonald parameters}.
These are used in the formulation of Koornwinder polynomials in \cite{M}.
%それから双対ラベル$k_i'$を以下で定める \cite[1.5]{M}.
%\begin{align}
%&k_1' := \thf(k_1+k_2+k_3+k_4), \quad 
% k_2' := \thf(k_1+k_2-k_3-k_4), \\
%&k_3' := \thf(k_1-k_2+k_3-k_4), \quad
% k_4' := \thf(k_1-k_2-k_3+k_4), \quad
% k_5' := k_5.
%\end{align}
%関係式\eqref{eq:ktau}を使ってラベル\eqref{eq:q^k}と野海流のパラメータの対応を書くと

As mentioned at \eqref{eq:intro:N} and in the beginning of this \S \ref{ss:HCC}, 
in the following argument, we will mainly use \emph{the Noumi parameters} 
\begin{align}
 t,t_0,t_n,u_0,u_n
\end{align}
introduced in \cite{N}.
As will be shown in \S \ref{sss:MN} below, we have the following relation between
the Macdonald parameters and the Noumi parameters.
\begin{align}\label{eq:MN}
 (q^{2 k_1},q^{2 k_2},q^{2 k_3},q^{2 k_4},q^{k_5}) = 
 (t_n u_n,\tfrac{t_n}{u_n},t_0 u_0,\tfrac{t_0}{u_0},t).
%q^{2 k_1} \simeq t_n u_n, \quad 
%q^{2 k_2} \simeq t_n/u_n, \quad 
%q^{2 k_3} \simeq t_0 u_0, \quad 
%q^{2 k_4} \simeq t_0/u_0, \quad 
%q^{  k_5} \simeq t.
\end{align}
Restating by \eqref{eq:kO}, the Noumi parameters and the $W$-orbits correspond in the way 
\begin{align}\label{eq:NO}
 t_n u_n \lrto O_1, \quad t_n/u_n \lrto O_2, \quad 
 t_0 u_0 \lrto O_3, \quad t_0/u_0 \lrto O_4, \quad t \lrto O_5.
\end{align}

Now we introduce the base field for (non-symmetric) Koornwinder polynomials.
Adding the square $t^{\shf},t_i^{\shf},u_i^{\shf}$ of the Noumi parameters and 
the new parameter $q^{\shf}$, 
we define the base field $\bK$ to be the rational function field 
\begin{align}\label{eq:bK}
 \bK := \bQ(q^{\hf},t^{\hf},t_{0}^{\hf},t_{n}^{\hf},u_{0}^{\hf},u_{n}^{\hf}).
\end{align}

Next, following \cite[\S 5.1]{M}, we explain the weight function for 
(non-symmetric) Koornwinder polynomials.
Using the exponent $e$ in the sense of \cite[(1.4.5)]{M}, which is given by
$e=1$ in our $(C_n^\vee,C_n)$ case, we set 
$c_0 := e^{-1} \cdot \sum_{i=0}^n a_i = \thf c$.
Here we used the affine roots $a_i$ in \eqref{eq:ai}.
Also, using $L:=P_{C^n}$, we set 
\begin{align}
 \Lambda := L \oplus \bZ c_0 = P_{C^n} \oplus \thf \bZ 
          =   \oplus_{i=1}^n \bZ \ep_i \oplus \thf \bZ.
\end{align}
Note that we have $S \subset \Lambda$.
For each $f=\mu+r c_0 \in \Lambda$, we define 
\begin{align}\label{eq:e^f}
 e^f := e^\mu q^{r/e}=e^\mu q^r.
\end{align}
Then, for the label $k$ in \eqref{eq:k},
we define the weight function $\Delta_{S,k}$ \cite[(5.1.7)]{M} as 
\begin{align}\label{eq:Delta}
 \Delta_{S,k} := \prod_{a \in S^+} \Delta_a 
               = \prod_{a \in S^+} \frac{1-q^{k(2a)} e^a}{1-q^{k(a)}e^a}.
%\prod_{a \in S_1^+} \frac{1-q^{k(2a)} e^a}{1-q^{k(a)}e^a}
%\prod_{a \in S_0^+} \frac{1-e^a}{1-q^{k(a)} e^a}.
\end{align}
Here we used $S^+$ in \eqref{eq:S^+} and set $k(2a):=0$ if $2a \notin S$.
As explained in \cite[(5.1.14)]{M}, we can rewrite $\Delta_{S,k}$ as 
\begin{align}
   \Delta_{S,k} 
&=\prod_{r=1}^4 \prod_{a \in S^+ \cap O_r} \Delta_\alpha \cdot
                \prod_{a \in S^+ \cap O_5} \Delta_\alpha 
 =\prod_{\alpha \in R_s^+} 
  \frac{(e^{2\alpha},q e^{-2\alpha};q)_\infty}
       {\prod_{r=1}^4 (v_r e^\alpha, v_r' e^{-\alpha};q)_\infty} \cdot
  \prod_{\alpha \in R_l^+} 
  \frac{(e^\alpha,q e^{-\alpha};q)}{(q^{k_5}e^\alpha,q^{k_5+1}e^{-\alpha};q)_\infty}.
\end{align}
Here $R_s^+$ and $R_l^+$ are the set of positive and short roots 
in the finite root system $R$ of type $C_n$, respectively.
Explicitly, we have 
\begin{align}
 R_s^+ := \{\ep_i \mid 1 \le i \le n\}, \quad  
 R_l^+ := \{\ep_i \pm \ep_j \mid 1 \le i < j \le n \}.
\end{align}
We also used the following $4 \times 2$ parameters $v_1,\dotsc,v_4$ and $v_1',\dotsc,v_4'$.
\begin{align}
 (v_1, \dotsc,v_4 ) := (q^{k_1  },-q^{k_2  },q^{k_3+\hf},-q^{k_4+\hf}). \quad
 (v_1',\dotsc,v_4') := (q^{k_1+1},-q^{k_2+1},q^{k_3+\hf},-q^{k_4+\hf}).
\end{align}
Finally, as mentioned in the last part of \cite[(5.1.7)]{M},
the following relation holds for each subsystem $S^0$ of the affine root system $S$.
\begin{align}\label{eq:Delta:sp}
 \rst{\Delta_{S,k}}{k(a)-k(2a)=0 \ (a \notin S^0)} = \Delta_{S^0,k}.
\end{align}
For the complete set of the subsystems $S^0$ in $S$,
see the comment in the beginning of this \S \ref{s:table}.

%$S$の部分集合$S_1,$を$S_1 := \{ a \in S \mid \thf a \notin S\}$で定義し,
%$S_1^+ := S_1 \cap S^+$, $S_0^+ := S^+ \setminus S_1^+$とする.

%またパラメータ$\{t_a \mid a \in S \} \subset \bR_{>0}$は
%$W a = W b$ $\Rightarrow$ $t_a=t_b$を満たすものとする.
%これからラベル$k: S \to \bR_{\ge 0}$が
%\begin{align}
% q^{k(a)} = t_a t_{2a}^{1/2}, \ q^{k(2a)} = t_{2a}^{1/2} & a \in S_1
%\end{align}
%で一意に定まる.

The weight function $\Delta_{S,k}$ defines an inner product on the space 
\begin{align}
 \bK[x^{\pm1}] = \bK[x_1^{\pm1},x_2^{\pm1},\dotsc, x_n^{\pm 1}], \quad x_i := e^{\ep_i}
\end{align} 
of the $n$-variable Laurent polynomials, where in the last part we used \eqref{eq:e^f}.
Then, by \cite[\S 5.2]{M}, we have the family of non-symmetric Koornwinder polynomials 
\begin{align}\label{eq:Emu}
 E_\mu(x) = E_\mu(x; q,t,t_0,t_n,u_0,u_n) \in \bK[x^{\pm1}], \quad \mu \in P_{C_n},
\end{align}
as a unique orthogonal basis of the inner product on $\bK[X^{\pm1}]$
satisfying the triangular property.
Moreover, by \cite[\S 5.3]{M}, for a dominant weight $\mu$ in $P_{C_n}$, 
we have the symmetric Koornwinder polynomial 
\begin{align}\label{eq:Pmu}
 P_\mu(x) = P_\mu(x; q,t,t_0,t_n,u_0,u_n) \in \bK[x^{\pm1}]^{W_0}.
\end{align}

%%%%%%%%%%%%%%%%%%%%%%%%%%%%%%%%%%%%%%%%%%%%%%%%%%%%%%%%%%%%%%%%%%%%%%%%%%%%%%%%%%%%%%%%%%
\subsubsection{Derivation of \eqref{eq:MN}}\label{sss:MN}

Let us derive the relation \eqref{eq:MN} between the Macdonald and Noumi parameters.
We use the notation of the affine Hecke algebra given in \cite[Chapter 4]{M}.
We make one modification: The base field $K$ is enlarged so that 
it contains $\tau_i$'s and $\tau_i'$'s defined blow, and $q^{1/2}$
(in the version of \cite{M}, it contains $q$ but doesn't $q^{1/2}$).

Let $q$ be a real number such that $0<q<1$,
and $K$ be a subfield of $\bR$ containing $q^{1/2}$.
We denote by $H$ the affine Hecke algebra associated to 
the extended affine Weyl group $W$ of \eqref{eq:W} in the sense of \cite[4.1]{M}.
It is an associative $K$-algebra generated by 
% and as a $K$-linear space, 
%it is given by \cite[(4.2.7), (4.3.1)]{M}:
\begin{align}\label{eq:H:T_i}
 H = \br{T_0,T_1,\dots,T_n}
\end{align}
with certain defining relations, for which we refer \cite[(2.2.3)--(2.2.5)]{YK}.

\begin{rmk}\label{rmk:YTexp}
We give another description of 
the affine Hecke algebra $H$. As a $K$-linear space, it has the form
\begin{align}\label{eq:HHK}
 H = H_0 \otimes_K K[Y^{t(\ep_j)} \mid j=1,2,\dotsc,n] \simeq H_0 \otimes_K K P_{C_n},
\end{align}
where $H_0$ denotes the Hecke algebra associated to the finite Weyl group $W_0$
of type $C_n$ (see \eqref{eq:W0}), 
and $K P_{C_n}$ denotes the group algebra of the additive group $P_{C_n}$.
The commuting elements $Y^{t(\ep_j)}$'s are defined in \cite[\S 3.2]{M},
and using the reduced expressions \eqref{eq:tep}, we have the following relations 
between $Y^{t(\ep_j)}$'s and the generators $T_i$'s in \eqref{eq:H:T_i}.
\begin{align}\label{eq:Yred}
 Y^{t(\ep_j)} = T_{j-1}^{-1} \dotsm T_1^{-1} T_0 \dotsm T_{n-1} T_n T_{n-1} \dotsm T_j.
\end{align}
Note that the ordering of $T_i$'s is opposite of those in some literature, 
for example \cite[\S 2.2, p.399]{Sa}, \cite[\S 3.1, p.312]{I} and \cite{Chi}.
This discrepancy is reflected on the triangular property of 
the (non-symmetric) Koornwinder polynomials.
Namely, the choices of the ordering on the space $\bK[x^{\pm 1}]$ 
and $\bK[x^{\pm 1}]^{W_0}$, where the (non-symmetric) Koornwinder polynomials live,
are in opposite between ours and those other literature.
\end{rmk}

Recall the Macdonald parameters $q^{k_1},q^{k_2},\dotsc,q^{k_5}$ in \eqref{eq:k}.
Following \cite[(4.4.3)]{M}, we introduce the additional parameters 
$\kappa_i, \kappa_i' \in K$ for $i=0,1,\dotsc,n$ as 
%Noting \eqref{eq:ainO:app}, we introduce 
%($i=0,\dotsc,n$) in the following way \cite[(4.4.3)]{M}:
\begin{align}
&k_1 = k( a_n) = \thf(\kappa_n+\kappa_n'), & 
&k_2 = k(2a_n) = \thf(\kappa_n-\kappa_n'), \\
&k_3 = k( a_0) = \thf(\kappa_0+\kappa_0'), & 
&k_4 = k(2a_0) = \thf(\kappa_0-\kappa_0'), & 
&k_5 = k( a_j) = \kappa_j =\kappa_j' \quad (1 \le j \le n-1).
\end{align}
We also introduce $\tau_i,\tau_i' \in K$ for $i=0,1,\dotsc,n$ by 
\begin{align}
 \tau_i := q^{\kappa_i/2}, \quad  \tau_i' := q^{\kappa_i'/2}.
\end{align}
By definition, we have
\begin{align}\label{eq:ktau}
 q^{k_1} = \tau_n \tau_n', \quad 
 q^{k_2} = \tau_n/\tau_n', \quad 
 q^{k_3} = \tau_0 \tau_0', \quad 
 q^{k_4} = \tau_0/\tau_0', \quad 
 q^{k_5} = \tau_j \tau_j'  \quad (1 \le j \le n-1).
%      1 = \tau_j/\tau_j'  \quad 
\end{align}

Using the parameters $\tau_i$ and $\tau_i'$, 
we explain the basic representation $\beta$ of $H$ \cite[(4.3.10)]{M},
which actually goes back to Lusztig \cite{L}.
It is a faithful representation in the group algebra $A = K L$ of 
$L:=Q^\vee_{C_n}=Q_{B_n}=P_{C_n}$ given by 
\begin{align}\label{eq:beta}
 \beta\colon H \linj \End_{K}(K P_{C_n}), \quad 
 \beta(T_i) := \tau_i s_i + \bfb_i(1-s_i) \quad (0 \le i \le n),
\end{align}
where, expressing the element of $K P_{C_n}$ corresponding to $\alpha \in P_{C_n}$
as $e^{\alpha}$, the function $\bfb_i$ is defined by 
\begin{align}\label{eq:Lusztig}
&\quad \bfb_i = \bfb(\tau_i,\tau'_i;x^{\alpha_i}) := 
 \frac{\tau_i-\tau_i^{-1}+(\tau_i'-{\tau_i'}^{-1})x^{\alpha_i/2}}{1-x^{\alpha_i}}, \\
\nonumber
&x^{\alpha_i}:= \begin{cases} x_i/x_{i+1} = e^{\ep_i-\ep_{i+1}} & (1 \le i \le n-1) \\ 
                              q x_1^{-2} = q e^{-2\ep_1}        & (i=0) \\ 
                                x_n^2 = e^{2\ep_n}              & (i=n) \end{cases}.
\end{align}
Here the symbol $\bfb(t,u;z)$ is borrowed from \cite[(4.2.1)]{M}, 
and the symbol $x^{\alpha_i}$ is from \cite{YK}.
Note that the representation $\beta$ is well defined although the function $\bfb_i$
does not belong to the group algebra $K P_{C_n}$.

The relation of the Macdonald and Noumi parameters is obtained by the comparison between
the realizations of the basic representation in \cite{M} and \cite{N}.
Slightly extending the Noumi parameters as 
\begin{align}
 t_0, \ t_n, \ u_0, \ u_n, \ 
 t_j := t, \ u_j:=1 \ (j=1,2,\dotsc,n-1),
\end{align}
we define the function $d_i(z)$ for $i=0,1,\dotsc,n$ as 
\begin{align}
 d_i(z) := \frac{t_i^{1/2}-t_i^{-1/2}+(u_i^{1/2}-u_i^{-1/2})z^{1/2}}{1-z}.
\end{align}
Then, comparing \cite[p.52]{N} and \cite[\S 4.3]{M} 
(see also \cite[(2.2.8)--(2.2.11)]{YK}), we have the relation
\begin{align}\label{eq:b=d}
 \bfb_i  = d_i(x^{\alpha_i}).
\end{align}
This relation \eqref{eq:b=d} yields the correspondence 
\begin{align}\label{eq:tauN}
 (\tau_0,\tau_n,\tau_0',\tau_n',\tau_j=\tau_j') = 
 (t_0^\hf,t_n^\hf,u_0^\hf,u_n^\hf,t^\hf).
%\tau_0  \simeq t_0^{1/2}, \quad \tau_n  \simeq t_n^{1/2}, \quad 
%\tau'_0 \simeq u_0^{1/2}, \quad \tau'_n \simeq u_n^{1/2}, \quad
%\tau_j=\tau'_j \simeq t^{1/2} \quad (1 \le j \le n-1). 
\end{align}
Combining it with \eqref{eq:ktau}, we obtain the relation \eqref{eq:MN}.

%%%%%%%%%%%%%%%%%%%%%%%%%%%%%%%%%%%%%%%%%%%%%%%%%%%%%%%%%%%%%%%%%%%%%%%%%%%%%%%%%%%%%%%%%%
%%%%%%%%%%%%%%%%%%%%%%%%%%%%%%%%%%%%%%%%%%%%%%%%%%%%%%%%%%%%%%%%%%%%%%%%%%%%%%%%%%%%%%%%%%
\subsection{Specialization to affine root system of type $C_n$}\label{ss:C}

As an illustration of deriving the specialization Table \ref{tab:sp},
%the derivation of Table \ref{tab:sp}, 
we explain how to find the parameter specialization for type $C_n$:
\begin{center}
\begin{tabular}{l||lllll}
        & $t$   & $t_0$  & $t_n$    & $u_0$ & $u_n$  \\ \hline
 $C_n$  & $t_s$ & $t_l^2$& $t_l^2$  & $1$   & $1$   
\end{tabular}
\end{center}
As mentioned in \S \ref{s:intro}, we need to observe the correspondence 
of the orbits of extended affine Weyl groups of type $(C_n^\vee,C_n)$ 
and of type $C_n$.
So we start with the explanation on the description of the type $C_n$ 
as the affine root subsystem of the type $(C_n^\vee,C_n)$.

Using the description \eqref{eq:S} of the affine root system $S$ of type $(C_n^\vee,C_n)$,
let us consider the following subset $S^C$ of $S$.
\begin{align}\label{eq:S^C}
\begin{split}
 S^C := O^C_s \sqcup O^C_l, \quad
&O^C_l := O_2 \sqcup O_4 = \{\pm 2\ep_i+r \mid 1 \le i \le n, \, r \in \bZ\}, \\
&O^C_s := O_5 = \{\pm \ep_i \pm \ep_j + r \mid 1 \le i < j \le n, \, r \in \bZ\}.
\end{split}
\end{align}
It is the affine root system of type $C_n$ in the sense of \cite[\S 1.3, (1.3.4)]{M}.
The following gives a basis $\{a^C_0,a^C_1,\dotsc,a^C_n\}$ of $S^C$ 
in the sense of \cite[\S 1.2]{M}.
\begin{align}
 a^C_0 := 2a_0 = -2\ep_1+1, \quad 
 a^C_j :=  a_j =   \ep_j-\ep_{j+1} \quad (1 \le j \le n-1), \quad 
 a^C_n := 2a_n =  2\ep_n,
\end{align}
Here is the Dynkin diagram cited from \cite[(1.3.4)]{M}:
\begin{align}
 \dynkin[mark=o, edge length=.75cm, labels={}, labels*={0,1,2,,n-1,n}] C[1]{}
\end{align}
The description \eqref{eq:S^C} gives the decomposition of 
the extended affine Weyl group $W^C$. To describe it, recall the finite Weyl group $W_0$ 
of type $C_n$ in \eqref{eq:W0}, which can be rewritten as
$W_0=\br{s_{a_1^C},s_{a_2^C},\dotsc,s_{a_n^C}}$. We also denote by 
\begin{align}\label{eq:PB}
 L' = P_{C_n}^\vee = P_{B_n} := 
 \oplus_{i=1}^n \bZ \ep_i \oplus \bZ \thf(\ep_1+\cdots+\ep_n)
\end{align}
the weight lattice of finite root system of type $B_n$.
Then $W^C$ is given by 
\begin{align}\label{eq:W^C}
 W^C := W_0 \ltimes t(L') = W_0 \ltimes t(P_{B_n}),
\end{align}
and it acts on $S^C$ \cite[\S 1.4, (1.4.6), (1.4.7)]{M}. The corresponding 
$W^C$-orbits are given by the above $O^C_s$ and $O^C_l$ \cite[\S 1.5]{M}.
By \eqref{eq:S^C}, we have $a^C_0, a^C_n \in O_2 \sqcup O_4 = O^C_l$ and 
$a^C_j \in O_5=O^C_s$ ($1 \le j \le n-1$).

Next, we explain the parameters for $S^C$. Similarly as in \S \ref{ss:HCC}, 
we attach parameters $k^C_s,k^C_l \in \bR$ to the $W^C$-orbits as 
\begin{align}\label{eq:k^C}
 k^C_s \lrto O^C_s, \quad k^C_l \lrto O^C_l,
\end{align}
and define the label $k^C: S^C \to \bR$ in the same way as $k:S \to \bR$ in \eqref{eq:k}.
%We may call these parameters \emph{the Macdonald parameters of type $C_n$}.
%Then, after some observation explained in Remark \ref{rmk:C},
We also denote 
\begin{align}\label{eq:C:kN}
 t^C_l := q^{k^C_l}, \quad t^C_s := q^{k^C_s} \quad (1 \le j \le n-1),
\end{align}
%The correspondence between the Noumi parameters of type $C_n$ and the $W^C$-orbits 
%are given by 
%\begin{align}\label{eq:CNO}
% t^C_s \lrto O_s^C, \quad t^C_l \lrto O_l^C.
%\end{align}

We now argue that under the specialization
\begin{align}
 (t,t_0,t_n,u_0,u_n) \lmto \bigl(t^C_s, (t^C_l)^2, (t^C_l)^2, 1, 1\bigr),
\end{align}
the non-symmetric Koornwinder polynomials degenerate into 
the non-symmetric Macdonald polynomials of type $C_n$.
Recalling that both polynomials are determined uniquely by the inner products,  
or by the weight functions, we see that it is enough to check that 
the weight function $\Delta_{S,k}$ in \eqref{eq:Delta} of type $(C_n,C_n)$
degenerates to that of type $C_n$.
The latter weight function is given by \cite[(5.1.7)]{M}:
\begin{align}
 \Delta^C = \Delta_{S^C,k^C} := 
 \prod_{a \in (S^C)^+} \frac{1-q^{k^C(2a)} e^a}{1-q^{k^C(a)}e^a}.
\end{align}
Here $(S^C)^+ \subset S^C$ is the set of positive roots with respect to the basis 
$\{a^C_0,a^C_1,\dotsc,a^C_n\}$, i.e., $(S^C)^+ := \sum_{i=0}^n \bN a_i^C \setminus\{0\}$,
and $k^C: S^C \to \bR$ is the extension of the label $k^C$ (see \eqref{eq:k^C})
by $k^C(2a):=0$ ($a \notin S$).
Recalling \eqref{eq:Delta:sp}, we have 
\begin{align}
 \rst{\Delta_{S,k}}{k(a)-k(2a)=0 \ (a \in S \setminus S^C)} = \Delta_{S^C,k}.
\end{align}
Thus, the desired specialization is given by 
\begin{align}\label{eq:C:k-sp}
 k(a)-k(2a) \lmto 0 \quad (a \in S \setminus S^C), \quad 
 k(a)-k(2a) \lmto k^C(a) \quad (a \in S^C).
\end{align}
Since \eqref{eq:S^C} yields $S \setminus S^C = O_1 \sqcup O_3$, 
$S^C = O^C_s \sqcup O^C_l$, $O^C_s=O_5$ and $O^C_l=O_2 \sqcup O_4$, the map 
\eqref{eq:C:k-sp} can be rewritten in terms of $k_1,k_2,\dotsc,k_5$ and $k^C_s,k^C_l$ as
\begin{align}
 k_1-k_2, \, k_3-k_4 \lmto 0, \quad k_2, \, k_4 \lmto k^C_l. \quad k_5 \lmto k^C_s.
\end{align}
Using \eqref{eq:MN} and \eqref{eq:C:kN}, and assuming $u_0,u_n>0$, 
we can further rewrite it as 
\begin{align}
\notag
&(t_n u_n)/\tfrac{t_n}{u_n}, \, (t_0 u_0)/\tfrac{t_0}{u_0} \lmto 1, \quad 
  \tfrac{t_0}{u_0}, \, \tfrac{t_n}{u_n} \lmto (t^C_l)^2, \quad t \lmto t^C_s \\
&\iff (t,t_0,t_n,u_0,u_n) \lmto \left(t^C_s, (t^C_l)^2, (t^C_l)^2, 1, 1\right).
\label{eq:spC}
\end{align}

Now we suppress the superscript $C$ in $t^C_s$ and $t^C_l$, and denote by 
\begin{align}
 E_\mu^C(x;q,t_s,t_l), \quad \mu \in P_{C_n}
\end{align}
the non-symmetric Macdonald polynomial of type $C_n$ (Definition \ref{dfn:type}).
Similarly, for a dominant $\mu \in P_{C_n}$, we denote by $P_\mu^C(x;q,t_s,t_l)$
the symmetric Macdonald polynomials of type $C_n$ .
Then the conclusion of this \S \ref{ss:C} is:

\begin{prp}\label{prp:C}
For any $\mu \in P_{C_n}$, we have
\begin{align}
 E_\mu^C(x;q,t_s,t_l) = E_\mu(x;q,t_s,t_l^2,t_l^2,1,1).
\end{align}
Also, for a dominant weight $\mu$, we have
\begin{align}
 P_\mu^C(x;q,t_s,t_l) = P_\mu(x;q,t_s,t_l^2,t_l^2,1,1).
\end{align}
\end{prp}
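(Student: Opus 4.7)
The plan is to invoke the uniqueness of the non-symmetric (respectively symmetric) Macdonald-Koornwinder polynomials as orthogonal bases characterized by a triangularity condition with respect to the weight function, and to trace through the parameter dictionary established in the preceding discussion. Specifically, by \cite[\S 5.2, \S 5.3]{M}, for each $\mu \in P_{C_n}$ the polynomial $E_\mu^C(x;q,t_s,t_l)$ is the unique element of $\bK[x^{\pm1}]$ with prescribed leading monomial $x^\mu$ that is orthogonal to lower terms with respect to the inner product defined by $\Delta_{S^C,k^C}$, and similarly $E_\mu(x;q,t,t_0,t_n,u_0,u_n)$ is determined by $\Delta_{S,k}$. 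Since both polynomials live in the same Laurent polynomial ring indexed by the same lattice $P_{C_n}$, and the finite Weyl group $W_0$ in \eqref{eq:W0} coincides in the two cases, the symmetrization procedure producing the symmetric versions is also the same.

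First I would record the identification of orbits and basis roots already carried out above: $S \setminus S^C = O_1 \sqcup O_3$, while $S^C = O_5 \sqcup (O_2 \sqcup O_4)$ with $O_5 = O^C_s$ and $O_2 \sqcup O_4 = O^C_l$. Then I would apply the key identity \eqref{eq:Delta:sp} from \cite[(5.1.7)]{M}, which asserts
\begin{equation}
 \rst{\Delta_{S,k}}{k(a)-k(2a)=0 \ (a \in S \setminus S^C)} = \Delta_{S^C,k},
\end{equation}
together with the assignment $k_2 \mapsto k^C_l$, $k_4 \mapsto k^C_l$, $k_5 \mapsto k^C_s$ on the labels surviving on $S^C$. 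Translating via the dictionary \eqref{eq:MN} between Macdonald and Noumi parameters gives the system $t_n u_n = t_n/u_n$, $t_0 u_0 = t_0/u_0$, $t_n/u_n = (t^C_l)^2$, $t_0/u_0 = (t^C_l)^2$, $t = t^C_s$, which under the positivity assumption $u_0, u_n > 0$ forces $(u_0,u_n)=(1,1)$ and $(t_0,t_n) = \bigl((t^C_l)^2, (t^C_l)^2\bigr)$, recovering exactly the type-$C_n$ row of Table~\ref{tab:sp} and the formula \eqref{eq:spC}.

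With the weight function identification in hand, the proof reduces to observing that specializing parameters in $E_\mu$ produces a polynomial that retains the leading term $x^\mu$ and the orthogonality with respect to the specialized inner product, hence must equal $E_\mu^C$ by the uniqueness characterization. The symmetric statement follows by applying the Macdonald symmetrizer (or equivalently by the characterization of $P_\mu$ for dominant $\mu$ via $W_0$-invariance plus triangularity), which commutes with the specialization because $W_0$ is independent of the parameters.

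The main obstacle is not conceptual but bookkeeping: one must verify that the specialization is admissible, i.e., that no denominator in the construction of $E_\mu$ (such as the Cherednik kernel or the Y-operator eigenvalues used in the triangularity) vanishes at the specialized parameters, so that the limit of $E_\mu$ exists as a genuine polynomial and the uniqueness argument applies. This is the generic-parameter subtlety underlying \cite[(5.1.7)]{M}, and once addressed, the chain of equalities from the weight-function degeneration to the polynomial identity is immediate.
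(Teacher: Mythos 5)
Your proposal is correct and follows essentially the same route as the paper: identify the orbit decomposition $S\setminus S^C=O_1\sqcup O_3$, $S^C=O_5\sqcup(O_2\sqcup O_4)$, apply the restriction formula \eqref{eq:Delta:sp} for the weight function, translate the resulting conditions on $k_1,\dotsc,k_5$ into the Noumi parameters via \eqref{eq:MN}, and conclude by the uniqueness characterization of the (non-)symmetric polynomials. Your closing remark on admissibility of the specialization (non-vanishing of denominators) is a reasonable extra precaution that the paper leaves implicit, but it does not change the argument.
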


The following table shows the comparison of the correspondence \eqref{eq:NO} 
between the Noumi parameters and the $W$-orbits with that \eqref{eq:k^C} 
between the parameters of type $C_n$ and the $W^C$-orbits.
\begin{center}
\begin{tabular}{r|l}
 Type $(C_n^\vee,C_n)$ & Type $C_n$ \\ \hline
 $t_n u_n \lrto O_1$   & \\
 $t_0 u_0 \lrto O_3$   & \\ \hline
 $t_n/u_n \lrto O_2$   & \multirow{2}{*}{$t^C_l \lrto O_l = O_2\sqcup O_4$} \\
 $t_0/u_0 \lrto O_4$   & \\ \hline
 $t \lrto O_5$         & $t^C_s \lrto O_s=O_5$
\end{tabular}
\end{center}

\begin{rmk}\label{rmk:C}
One may wonder whether it is possible to see the specialization \eqref{eq:spC} 
on the level of affine Hecke algebras. To clarify the point, let us denote by $H^C$
the affine Hecke algebra associated to the group $W^C$ \eqref{eq:W^C}
in the sense of \cite[4.1]{M}. It is an associative algebra over $K \subset \bR$
(see \S \ref{ss:HCC}), and as a $K$-linear space, it has the form 
$H^C = H_0 \otimes_K K[Y_C^{\lambda'} \mid \lambda' \in P_{B_n}] 
 \simeq H_0 \otimes_K K P_{B_n}$ by \cite[(4.2.7), (4.3.1)]{M}.
Here we used similar notation as in \eqref{eq:HHK}. In particular, 
$H_0$ is the Hecke algebra associated to the finite Weyl group $W_0$ of type $C_n$, 
and the part $K[Y_C^{\lambda'} \mid \lambda' \in P_{B_n}]$ is a commutative subalgebra.
Also, following \cite[(4.4.2)]{M}, we define $\tau_{C,i} = \tau_{C,i}' := q^{k^C_r/2}$, 
where $r:=s$ ($a_i \in O^C_s$) and $r:=l$ ($a_i \in O^C_l$).
%By definition, we have
%\begin{align}
% q^{k^C_l} = \tau_{C,0} \tau_{C,0}' = \tau_{C,n} \tau_{C,n}', \quad 
% q^{k^C_s} = \tau_{C,j} \tau_{C,j}' \quad (1 \le j \le n-1).
%\end{align}
Then, using the function \eqref{eq:Lusztig} with the parameters 
$\tau_{C,i}$ and $\tau_{C,i}'$ instead of $\tau_i$ and $\tau_i'$,
we have a faithful $H^C$-module 
\begin{align}
 \beta^C\colon H^C \linj \End_{K}(K P_{C_n}).
\end{align}
which is the basic representation of type $C_n$.
The basic representations $\beta$ \eqref{eq:beta} and $\beta^C$ 
sit in the following diagram.
\begin{center}
\begin{tikzcd}
 H   \arrow[d, hook, "\beta"]   & 
 H^C \arrow[d, hook, "\beta^C"] \\
 \End_K(K P_{C_n}) \arrow[r, equal] & \End_K(K P_{C_n})
\end{tikzcd}
\end{center}
One can see that the specialization \eqref{eq:spC} maps 
$\beta(T_j) \mapsto \beta^C(T_j) \quad (1 \le j \le n-1)$,
but the images of $\beta(T_i)$ is not equal to $\beta^C(T_i)$ for $i=0,n$.
Thus, it is unclear whether we can see the specialization 
on the level of affine Hecke algebras $H$ and $H^C$.
\end{rmk}

%%%%%%%%%%%%%%%%%%%%%%%%%%%%%%%%%%%%%%%%%%%%%%%%%%%%%%%%%%%%%%%%%%%%%%%%%%%%%%%%%%%%%%%%%%
%%%%%%%%%%%%%%%%%%%%%%%%%%%%%%%%%%%%%%%%%%%%%%%%%%%%%%%%%%%%%%%%%%%%%%%%%%%%%%%%%%%%%%%%%%
\subsection{Specialization to other subsystems}\label{ss:oth}

For all the subsystems of the affine root system $S$ of type $(C_n^\vee,C_n)$,
we can make similar arguments as in \S \ref{ss:C}, 
which will yield the specialization Table \ref{tab:sp}.
In this subsection, we list all the arguments except type $C_n$ which is already done.
Let us write again the specialization table:
\begin{center}
\begin{tabular}{ll||lllll|ll||lllll}
\multicolumn{2}{c||}{reduced}     & $t$   & $t_0$  & $t_n$    & $u_0$ & $u_n$ &
\multicolumn{2}{c||}{non-reduced} & $t$   & $t_0$  & $t_n$    & $u_0$ & $u_n$ \\ \hline
            $B_n$& \S \ref{ss:B}  & $t_l$ & $1$    & $t_s$    & $1$   & $t_s$ &
     $(BC_n,C_n)$& \S \ref{ss:BCC}& $t_m$ & $t_l^2$& $t_s t_l$& $1$   & $t_s/t_l$ \\
       $B^\vee_n$& \S \ref{ss:Bv} & $t_s$ & $1$    & $t_l^2$  & $1$   & $1$   &
$(C^\vee_n,BC_n)$& \S \ref{ss:CBC}& $t_m$ & $ t_s$ & $t_s t_l$& $t_s$ & $t_s/t_l$ \\
            $C_n$& \S \ref{ss:C}  & $t_s$ & $t_l^2$& $t_l^2$  & $1$   & $1$   &
 $(B^\vee_n,B_n)$& \S \ref{ss:BB} & $t_m$ & $1$    & $t_s t_l$& $1$   & $t_s/t_l$ \\
       $C^\vee_n$& \S \ref{ss:Cv} & $t_l$ & $t_s$  & $t_s$    & $t_s$ & $t_s$ \\
           $BC_n$& \S \ref{ss:BC} & $t_m$ & $t_l^2$& $t_s$    & $1$   & $t_s$ \\
            $D_n$& \S \ref{ss:D}  & $t$   & $1$    & $1$      & $1$   & $1$
\end{tabular}
\end{center}

A remark is in order on the treatment of the type $BC_n$ and the non-reduced systems.
As we have seen in \S \ref{sss:MN}, the argument on the specialization to type $C_n$ 
used the extended affine Weyl group of of type $C_n$.
In contrast, as commented at the beginning of \S 5.1 and (5.1.7) in \cite{M},
we don't have the extended affine Weyl groups (or the affine Hecke algebras) 
associated to the type $BC_n$ and the non-reduced systems,
so we cannot follow the argument in \S \ref{sss:MN}.
However, the (non-symmetric) Macdonald polynomials for non-reduced systems 
are defined as the specialization of Koornwinder polynomials in \cite{M},
and thus the situations are easier than reduced systems.

%%%%%%%%%%%%%%%%%%%%%%%%%%%%%%%%%%%%%%%%%%%%%%%%%%%%%%%%%%%%%%%%%%%%%%%%%%%%%%%%%%%%%%%%%%
\subsubsection{Type $B_n$}\label{ss:B}

For $n \in \bZ_{\ge 3}$, the following subset $S^B \subset S$ forms 
the affine root system of type $B_n$ in the sense of \cite[\S 1.3, (1.3.2)]{M}.
\begin{align}\label{eq:S^B}
\begin{split}
 S^B := O^B_s \sqcup O^B_l, \quad
&O^B_s := O_1 = \{\pm \ep_i+r \mid 1 \le i \le n, \ r \in \bZ\}, \\
&O^B_l := O_5 = \{\pm \ep_i \pm \ep_j + r \mid 1 \le i < j \le n, \, r \in \bZ\}.
\end{split} \\
&\dynkin[mark=o, edge length=.75cm, labels={}, labels*={0,1,2,,,n-1,n}] B[1]{}
\end{align}
Using the symbol $L'=P^\vee_{B_n}=P_{C_n}=\oplus_{i=1}^n \bZ \ep_i$ in \cite[1.4]{M}, 
the extended affine Weyl group is given by 
\begin{align}
 W^B := W_0^B \ltimes t(L') = W_0^B \ltimes t(P_{C_n}) \simeq W.
\end{align}
Here $W_0^B$ denotes the Weyl group of the finite root lattice $B_n$.
The group $W^B$ acts on $S^B$ by permutation, 
and the $W^B$-orbits are given by $O^B_s$ and $O^B_l$.
We attach parameters $k^B_s$ and $k^B_l$ to the $W^B$-orbits as 
\begin{align}
 O^B_s \lrto k^B_s, \quad O^B_l \lrto k^B_l,
\end{align}
and define the label $k^B$ by 
\begin{align}
 k^B: S^B \lto \bR, \quad 
 k^B(a):=k^B_s \quad (a \in O^B_s), \quad  k^B(a):=k^B_l \quad (a \in O^B_l).
\end{align}
Mimicking the relation \eqref{eq:MN}, 
we introduce the parameters of type $B_n$ by 
\begin{align}\label{eq:B:kN}
 t^B_s := q^{k^B_s}, \quad  t^B_l := q^{k^B_l}.
\end{align}
They correspond to the $W^B$-orbits as $t^B_s \lrt O^B_s$ and $t^B_l \lrt O^B_l$.

The weight function of type $B_n$ is given by 
\begin{align}
 \Delta^B = \Delta_{S^B,k^B} := 
 \prod_{a \in (S^B)^+} \frac{1-q^{k^B(2a)} e^a}{1-q^{k^B(a)}e^a}.
\end{align}
Then, \eqref{eq:Delta:sp} yields 
\begin{align}
 \rst{\Delta_{S,k}}{k(a)-k(2a)=0 \ (a \in S \setminus S^B)} = \Delta_{S^B,k}.
\end{align}
Thus the desired specialization is given by 
\begin{align}
 k(a)-k(2a) \lmto 0      \quad (a \in S \setminus S^B), \quad 
 k(a)-k(2a) \lmto k^B(a) \quad (a \in S^B).
\end{align}
By \eqref{eq:S^B}, we have $S \setminus S^B = O_2 \sqcup O_3 \sqcup O_4$, 
$S^B = O^B_s \sqcup O^B_l$, $O^B_s=O_1$ and $O^B_l=O_5$. Then, we can rewrite 
the specialization in terms of $k_1,\dotsc,k_5$ and $k^B_s,k^B_l$ as 
\begin{align}
 k_2-0, \, k_3-k_4, \, k_4-0 \lmto 0, \quad k_1 \lmto k^B_s, \quad k_5 \lmto k^B_l.
\end{align}
Using \eqref{eq:MN} and \eqref{eq:B:kN}, and assuming $t_0,u_0>0$, we have 
\begin{align}
&\tfrac{t_n}{u_n}, \, (t_0 u_0)/\tfrac{t_0}{u_0}, \, \tfrac{t_0}{u_0} \lmto 1, \quad 
  t_n u_n \lmto (t^B_s)^2, \quad  t \lmto t^B_l \\
&\iff (t,t_0,t_n,u_0,u_n) \lmto \bigl(t^B_l, 1, t^B_s, 1, t^B_s \bigr). \label{eq:spB}
\end{align}

Now we suppress the superscript $B$ in the parameters, and denote by 
\begin{align}
 E_\mu^B(x;q,t_s,t_l), \quad 
 \mu \in P_{B_n} := \oplus_{i=1}^n \bZ \ep_i \oplus \thf(\ep_1+\ep_2+\dotsb+\ep_n)
\end{align} 
the non-symmetric Macdonald polynomial of type $B_n$  (Definition \ref{dfn:type}).
Having that $P_{C_n} \subset P_{B_n}$, we conclude:

\begin{prp}\label{prp:B}
For any $\mu \in P_{C_n}$, we have
\begin{align}
 E_\mu^B(x;q,t_s,t_l) = E_\mu(x;q,t_l,1,t_s,1,t_s).
\end{align}
\end{prp}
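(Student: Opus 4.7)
The plan is to follow exactly the template used for type $C_n$ in \S\ref{ss:C}. Both the non-symmetric Koornwinder polynomial $E_\mu(x;q,t,t_0,t_n,u_0,u_n)$ and the non-symmetric Macdonald polynomial $E_\mu^B(x;q,t_s,t_l)$ are uniquely characterized as orthogonal bases of the appropriate spaces of Laurent polynomials with respect to inner products induced by $\Delta_{S,k}$ and $\Delta_{S^B,k^B}$, together with a triangularity property. Therefore the desired identity will follow once we verify that, under the stated substitution of Noumi parameters, $\Delta_{S,k}$ specializes to $\Delta_{S^B,k^B}$, and that the relevant ordering restricts compatibly along the inclusion $P_{C_n} \subset P_{B_n}$.

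The first step is to invoke the general identity \eqref{eq:Delta:sp}: restricting the labels so that $k(a)-k(2a)=0$ for every $a \in S \setminus S^B$ reduces $\Delta_{S,k}$ to $\Delta_{S^B,k}$. The orbit decomposition \eqref{eq:S^B} gives $S \setminus S^B = O_2 \sqcup O_3 \sqcup O_4$ and $S^B = O_1 \sqcup O_5$ with $O_1 = O^B_s$ and $O_5 = O^B_l$, so the specialization is expressed by $k_2,\, k_3 - k_4,\, k_4 \mapsto 0$ together with $k_1 \mapsto k^B_s$ and $k_5 \mapsto k^B_l$. Translating via the relation \eqref{eq:MN} between Macdonald and Noumi parameters and the conventions \eqref{eq:B:kN}, and assuming $t_0, u_0 > 0$, this reads exactly as \eqref{eq:spB}, namely $(t,t_0,t_n,u_0,u_n) \mapsto (t^B_l, 1, t^B_s, 1, t^B_s)$.

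The second step, essentially automatic once the weight functions coincide, is to conclude the polynomial identity from the uniqueness statement recalled after \eqref{eq:Emu}. One records here that $P_{C_n} \subset P_{B_n}$ so that every weight $\mu$ appearing on the left hand side is a legitimate weight for the type $B_n$ polynomial, and that the dominance ordering on $P_{B_n}$ restricts to the ordering governing the triangularity of the Koornwinder polynomials on $P_{C_n}$; since both polynomials then satisfy the same triangularity relation with the same leading term $x^\mu$ and are orthogonal with respect to the same (specialized) inner product, they must agree.

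The main subtlety I expect is orbit bookkeeping: one must carefully distinguish the five $W$-orbits of $S$, labelled by $t_n u_n,\, t_n/u_n,\, t_0 u_0,\, t_0/u_0,\, t$ via \eqref{eq:NO}, from the two $W^B$-orbits of $S^B$, labelled by $t^B_s,\, t^B_l$, and verify that the substitution sends each Macdonald parameter $q^{k_r}$ to the value prescribed by the subsystem orbit containing the corresponding root. Everything else is parallel to the argument carried out in \S\ref{ss:C} for type $C_n$.
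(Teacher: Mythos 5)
Your proposal is correct and follows essentially the same route as the paper: reduce to the degeneration of the weight function via \eqref{eq:Delta:sp}, identify $S \setminus S^B = O_2 \sqcup O_3 \sqcup O_4$ and translate the condition $k(a)-k(2a)=0$ into $(t,t_0,t_n,u_0,u_n) \mapsto (t_l,1,t_s,1,t_s)$ through \eqref{eq:MN} and \eqref{eq:B:kN}, then conclude by the uniqueness of the orthogonal family on $P_{C_n} \subset P_{B_n}$. The orbit bookkeeping you flag as the main subtlety is exactly what the paper carries out, and your version of it is accurate.
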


Also, for a dominant weight $\mu$, we have
\begin{align}
 P_\mu^B(x;q,t_s,t_l) = P_\mu(x;q,t_l,1,t_s,1,t_s)
\end{align}
for the symmetric Macdonald polynomials of type $B_n$.

\begin{rmk}\label{rmk:B}
We can make a similar observation as in Remark \ref{rmk:C}.
Let us denote by $H^B$ the affine Hecke algebra 
for the extended Weyl group $W^B$ in the sense of \cite[Chap.\ 4]{M}.
As a linear space over the base field $K$, we have 
$H^B \simeq H_0 \otimes_K K P_{C_n} \simeq H$.
Denoting by $\beta^B$ the basic representation of $H^B$, we have the following diagram.
\begin{center}
\begin{tikzcd}
 H   \arrow[d, hook, "\beta"]   & 
 H^B \arrow[d, hook, "\beta^B"] \\
 \End_K(K P_{C_n}) \arrow[r,hook] & \End_K(K P_{B_n})
\end{tikzcd}
\end{center}
As in Remark \ref{rmk:C}, we can that the specialization \eqref{eq:spB} maps 
$\beta(T_j) \mapsto \beta^B(T_j)$ for $j=1,2,\dotsc,n-1$,
but the images of $\beta(T_i)$ is not equal to $\beta^B(T_i)$ for $i=0,n$.
\end{rmk}

%%%%%%%%%%%%%%%%%%%%%%%%%%%%%%%%%%%%%%%%%%%%%%%%%%%%%%%%%%%%%%%%%%%%%%%%%%%%%%%%%%%%%%%%%%
\subsubsection{Type $B^\vee_n$}\label{ss:Bv} 

For $n \in \bZ_{\ge 3}$, the following subset $S^{B^\vee} \subset S$ forms 
the affine root system of type $B_n^\vee$ in the sense of \cite[\S 1.3, (1.3.3)]{M}.
\begin{align}\label{eq:S^Bv}
\begin{split}
 S^{B^\vee} := O^{B^\vee}_l \sqcup O^{B^\vee}_s, \quad
&O^{B^\vee}_l := O_2 = \{\pm 2 \ep_i+2 r \mid 1 \le i \le n, \ r \in \bZ\}, \\
&O^{B^\vee}_s := O_5 = \{\pm \ep_i \pm \ep_j + r \mid 1 \le i < j \le n, \, r \in \bZ\}.
\end{split} \\
&\dynkin[mark=o, edge length=.75cm, reverse arrows, 
         labels={}, labels*={0,1,2,,,n-1,n}] B[1]{}
\end{align}
Using the symbol $L=L'=P^\vee_{B_n}=P_{C_n}=\oplus_{i=1}^n \bZ \ep_i$ in \cite[1.4]{M},
the extended affine Weyl group is given by 
\begin{align}
 W^{B^\vee} := W_0^B \ltimes t(L') = W_0^B \ltimes t(P_{C_n}) \simeq W.
\end{align}
It acts on $S^{B^\vee}$, and the $W^{B^\vee}$-orbits are $O^{B^\vee}_s$ and $O^{B^\vee}_l$.
We attach parameters to these orbits as 
\begin{align}
 k^{B^\vee}_s \lrto O^{B^\vee}_s, \quad k^{B^\vee}_l \lrto O^{B^\vee}_l,
\end{align}
and define the label $k^{B^\vee}: S^{B^\vee} \to \bR$ as before.
We also introduce another set of parameters as 
\begin{align}\label{eq:Bv:kN}
 t^{B^\vee}_l := \tau_{B^\vee,n}^2 = q^{k^{B^\vee}_n}, \quad 
 t^{B^\vee}_s := \tau_{B^\vee,j}^2 = q^{k^{B^\vee}_j}  \quad (0 \le i \le n-1).
\end{align}
They correspond to the $W^{B^\vee}$-orbits as 
$t^{B^\vee}_s \lrt O^{B^\vee}_s$ and $t^{B^\vee}_l \lrt O^{B^\vee}_l$.

The weight function of type $B_n^\vee$ is given by 
\begin{align}
 \Delta^{B^\vee} = \Delta_{S^{B^\vee},k^{B^\vee}} := 
 \prod_{a \in (S^{B^\vee})^+} \frac{1-q^{k^{B^\vee}(2a)} e^a}{1-q^{k^{B^\vee}(a)}e^a}.
\end{align}
Then, \eqref{eq:Delta:sp} yields
\begin{align}
 \rst{\Delta_{S,k}}{k(a)-k(2a)=0 \ (a \in S \setminus S^{B^\vee})} = 
 \Delta_{S^{B^\vee},k}.
\end{align}
Thus the specialization from type $(C_n^\vee,C_n)$ to type $B_n^\vee$ is given by 
\begin{align}
 k(a)-k(2a) \lmto 0      \quad (a \in S \setminus S^{B^\vee}), \quad 
 k(a)-k(2a) \lmto k^B(a) \quad (a \in S^B).
\end{align}
By \eqref{eq:S^Bv}, we have $S \setminus S^{B^\vee} = O_1 \sqcup O_3 \sqcup O_4$, 
$S^{B^\vee} = O^{B^\vee}_s \sqcup O^{B^\vee}_l$, 
$O^{B^\vee}_s=O_5$ and  $O^{B^\vee}_l=O_2$.
Then the above specialization can be written as 
\begin{align}
 k_1-k_2, \, k_3-k_4, \, k_4-0 \lmto 0, \quad 
 k_2 \lmto k^{B^\vee}_l, \quad k_5 \lmto k^{B^\vee}_s.
\end{align}
Using \eqref{eq:MN} and \eqref{eq:Bv:kN}, and assuming $t_0,u_n,u_0>0$, 
we can further rewrite it as 
\begin{align}
&(t_n u_n)/\tfrac{t_n}{u_n}, \, (t_0 u_0)/\tfrac{t_0}{u_0}, \, \tfrac{t_0}{u_0} \lmto 1,
 \quad t_n/u_n \lmto (t^{B^\vee}_l)^2, \quad  t \lmto t^{B^\vee}_s \\
&\iff (t,t_0,t_n,u_0,u_n) \lmto \bigl(t^{B^\vee}_s, 1, (t^{B^\vee}_l)^2, 1, 1\bigr).
\end{align}

Now we suppress the superscript $B^\vee$ in the parameters, and denote by 
\begin{align}
 E_\mu^{B^\vee}(x;q,t_s,t_l), \quad \mu \in P^\vee_{B_n} = P_{C_n}
\end{align} 
the non-symmetric Macdonald polynomial of type $B_n^\vee$ 
(Definition \ref{dfn:type}).
The conclusion of this \S \ref{ss:Bv} is:

\begin{prp}
For any $\mu \in P_{C_n}$, we have
\begin{align}
 E_\mu^{B^\vee}(x;q,t_s,t_l) = E_\mu(x;q,t_s,1,t_l^2,1,1).
\end{align}
\end{prp}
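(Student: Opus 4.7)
The plan is to reproduce exactly the same pattern already carried out for type $C_n$ in \S\ref{ss:C} and for type $B_n$ in \S\ref{ss:B}. Both the non-symmetric Koornwinder polynomial $E_\mu(x)$ and the non-symmetric Macdonald polynomial $E_\mu^{B^\vee}(x)$ are uniquely determined by their weight-function inner product together with the triangularity property, and the underlying weight lattice is the same in both settings, namely $P_{C_n} = P^\vee_{B_n}$, carrying the same order used in the triangularity condition. Hence it suffices to show that under the claimed Noumi specialization, the Koornwinder weight function $\Delta_{S,k}$ degenerates to the type-$B_n^\vee$ weight function $\Delta_{S^{B^\vee},k^{B^\vee}}$.

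To carry this out, I would first read off the subsystem $S^{B^\vee} \subset S$ from \eqref{eq:S^Bv}, noting $S^{B^\vee} = O_2 \sqcup O_5$ with $O^{B^\vee}_l = O_2$ and $O^{B^\vee}_s = O_5$, so that the complementary affine roots form $S \setminus S^{B^\vee} = O_1 \sqcup O_3 \sqcup O_4$. Applying the general degeneration formula \eqref{eq:Delta:sp} then amounts to imposing $k(a) - k(2a) = 0$ for every $a \in S \setminus S^{B^\vee}$ together with $k(a) = k^{B^\vee}(a)$ for every $a \in S^{B^\vee}$. Expanded in terms of the Macdonald parameters $k_1, \dotsc, k_5$, these read $k_1 - k_2 = 0$ (from $O_1$, whose doubles lie in $O_2$), $k_3 - k_4 = 0$ (from $O_3$, whose doubles lie in $O_4$), $k_4 = 0$ (from $O_4$, whose doubles are absent from $S$), and finally $k_2 = k^{B^\vee}_l$, $k_5 = k^{B^\vee}_s$.

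Next, I would translate these conditions to the Noumi parameters via the dictionary \eqref{eq:MN} and the convention \eqref{eq:Bv:kN}. Under the positivity assumption $t_0, u_0, u_n > 0$, the equalities $t_n u_n = t_n/u_n$, $t_0 u_0 = t_0/u_0$, and $t_0/u_0 = 1$ force $u_n = u_0 = 1$ and $t_0 = 1$, while $t_n/u_n = (t^{B^\vee}_l)^2$ and $t = t^{B^\vee}_s$ deliver the remaining two. Assembling the five conditions and then suppressing the superscript $B^\vee$ yields the desired specialization $(t, t_0, t_n, u_0, u_n) \mapsto (t_s, 1, t_l^2, 1, 1)$, and invoking the uniqueness of $E_\mu$ on both sides completes the proof.

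The main obstacle is the careful bookkeeping of the doubling relation $a \mapsto 2a$ among the $W$-orbits $O_1, \dotsc, O_5$: the non-reduced structure of $S$ (with both $O_1, 2O_1 = O_2$ and $O_3, 2O_3 = O_4$ present) is precisely what forces the distinction between ``killing'' conditions of the form $k_i - k_{i+1} = 0$ versus $k_i = 0$, depending on whether the orbit in question has or lacks a companion inside $S$. Once this orbit-level analysis is settled correctly by a direct reading of \eqref{eq:S}, the remaining manipulation is entirely routine and structurally identical to the type $B_n$ calculation in \S\ref{ss:B}.
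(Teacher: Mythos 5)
Your proposal is correct and follows essentially the same route as the paper's own derivation in \S\ref{ss:Bv}: identify $S \setminus S^{B^\vee} = O_1 \sqcup O_3 \sqcup O_4$, impose $k_1-k_2,\,k_3-k_4,\,k_4 \mapsto 0$ and $k_2 \mapsto k^{B^\vee}_l$, $k_5 \mapsto k^{B^\vee}_s$ via \eqref{eq:Delta:sp}, and translate through \eqref{eq:MN} and \eqref{eq:Bv:kN} to obtain $(t,t_0,t_n,u_0,u_n) \mapsto (t_s,1,t_l^2,1,1)$. The orbit-level bookkeeping of the doubling map that you flag as the main point is handled exactly as the paper does it, so there is nothing to add.
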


%%%%%%%%%%%%%%%%%%%%%%%%%%%%%%%%%%%%%%%%%%%%%%%%%%%%%%%%%%%%%%%%%%%%%%%%%%%%%%%%%%%%%%%%%%
\subsubsection{Type $C^\vee_n$}\label{ss:Cv}

For $n \in \bZ_{\ge 2}$, the following subset $S^{C^\vee} \subset S$ forms 
the affine root system of type $C_n^\vee$ in the sense of \cite[\S 1.3, (1.3.5)]{M}.
\begin{align}\label{eq:S^Cv}
\begin{split}
 S^{C^\vee} := O^{C^\vee}_s \sqcup O^{C^\vee}_l, \quad
&O^{C^\vee}_s := O_1 \sqcup O_3 = \{\pm \ep_i+\thf r \mid 1 \le i \le n, \ r \in \bZ\}, \\
&O^{C^\vee}_l := O_5 = \{\pm \ep_i \pm \ep_j + r \mid 1 \le i < j \le n, \, r \in \bZ\}.
\end{split} \\
&\dynkin[mark=o, edge length=.75cm, reverse arrows, 
         labels={}, labels*={0,1,2,,n-1,n}] C[1]{}
\end{align}
Using 
$L=L'=P_{C_n}^\vee=P_{B_n}=\oplus_{i=1}^n \bZ \ep_i \oplus \thf(\ep_1+\cdots+\ep_n)$,
the extended affine Weyl group is given by 
\begin{align}
 W^{C^\vee} := W_0 \ltimes t(L') = W_0 \ltimes t(P_{B_n}) = W^C.
\end{align}
The $W^{C^\vee}$-orbits on $S^{C^\vee}$ are $O^{C^\vee}_s$ and $O^{C^\vee}_l$.
We define the label $k^{C^\vee}: S^{C^\vee} \to \bR$ using the correspondence 
\begin{align}
 k^{C^\vee}_s \lrto O^{C^\vee}_s, \quad k^{C^\vee}_l \lrto O^{C^\vee}_l.
\end{align}
Mimicking the relation \eqref{eq:MN}, we define another set of parameters as 
\begin{align}\label{eq:Cv:kN}
 t^{C^\vee}_s := q^{k^{C^\vee}_s}, \quad 
 t^{C^\vee}_l := q^{k^{C^\vee}_l}.
\end{align}
They correspond to the $W^{C^\vee}$-orbits as 
$t^{C^\vee}_s \lrt O^{C^\vee}_s$ and $t^{C^\vee}_l \lrt O^{C^\vee}_l$.

The weight function is given by 
\begin{align}
 \Delta^{C^\vee} = \Delta_{S^{C^\vee},k^{C^\vee}} := 
 \prod_{a \in (S^{C^\vee})^+} \frac{1-q^{k^{C^\vee}(2a)} e^a}{1-q^{k^{C^\vee}(a)}e^a}.
\end{align}
Then \eqref{eq:Delta:sp} yields 
\begin{align}
 \rst{\Delta_{S,k}}{k(a)-k(2a)=0 \ (a \in S \setminus S^{C^\vee})} 
 = \Delta_{S^{C^\vee},k}.
\end{align}
Thus the specialization to type $C_n^\vee$ is given by 
\begin{align}
 k(a)-k(2a) \lmto 0      \quad (a \in S \setminus S^{C^\vee}), \quad 
 k(a)-k(2a) \lmto k^B(a) \quad (a \in S^B).
\end{align}
By \eqref{eq:S^Cv}, we have $S \setminus S^{C^\vee} = O_2 \sqcup O_4$, 
$S^{C^\vee} = O^{C^\vee}_s \sqcup O^{C^\vee}_l$, 
$O^{C^\vee}_s=O_1 \sqcup O_3$ and $O^{C^\vee}_l=O_5$. 
Then we can rewrite the above specialization as 
%$k_1,\ldots,k_5$及び$k^{C^\vee}_s,k^{C^\vee}_l$で書き直すと
\begin{align}
 k_2-0, \, k_4-0 \lmto 0, \quad 
 k_1, \, k_3 \lmto k^{C^\vee}_s, \quad k_5 \lmto k^{C^\vee}_l.
\end{align}
Using \eqref{eq:MN} and \eqref{eq:Cv:kN}, we can rewrite it as 
\begin{align}
&\tfrac{t_n}{u_n}, \, \tfrac{t_0}{u_0} \lmto 1, \quad 
 t_n u_n, \, t_0 u_0 \lmto (t^{C^\vee}_s)^2, \quad  t \lmto t^{C^\vee}_l \\
&\iff (t,t_0,t_n,u_0,u_n) \lmto 
 \bigl(t^{C^\vee}_l, t^{C^\vee}_s, t^{C^\vee}_s, t^{C^\vee}_s, t^{C^\vee}_s \bigr).
\end{align}

We suppress the superscript $C^\vee$ in the parameters, and denote by 
\begin{align}
 E_\mu^{C^\vee}(x;q,t_s,t_l), \quad \mu \in P^\vee_{C_n} = P_{B_n}
\end{align} 
the non-symmetric Macdonald polynomial of type $C_n^\vee$ (Definition \ref{dfn:type}).
Noting that $P_{C_n} \subset P_{B_n}$, we have the conclusion:

\begin{prp}
For any $\mu \in P_{C_n}$, we have
\begin{align}
 E_\mu^{C^\vee}(x;q,t_s,t_l) = E_\mu(x;q,t_l,t_s,t_s,t_s,t_s).
\end{align}
\end{prp}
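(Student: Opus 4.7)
The plan is to follow the template already developed for types $C_n$, $B_n$, and $B_n^\vee$ in the preceding subsections. Both $E_\mu$ and $E_\mu^{C^\vee}$ are uniquely characterized as orthogonal bases satisfying a triangular property relative to the inner products defined by $\Delta_{S,k}$ and $\Delta_{S^{C^\vee},k^{C^\vee}}$, respectively, so it is enough to show that the specialization of the Noumi parameters stated in the proposition converts the first weight function into the second. The degeneration formula \eqref{eq:Delta:sp} furnishes the abstract mechanism: I need to impose $k(a)=k(2a)$ for all $a\in S\setminus S^{C^\vee}$, and then match the remaining label values on $S^{C^\vee}$ with those of $k^{C^\vee}$.

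First I would read off the orbit structure from \eqref{eq:S^Cv}: $S\setminus S^{C^\vee}=O_2\sqcup O_4$, while $S^{C^\vee}=(O_1\sqcup O_3)\sqcup O_5$ with $O^{C^\vee}_s=O_1\sqcup O_3$ and $O^{C^\vee}_l=O_5$. The conditions $k(a)=k(2a)$ on the complementary orbits amount to $k_2=0$ and $k_4=0$, while the matching conditions on the surviving orbits give $k_1=k_3=k^{C^\vee}_s$ and $k_5=k^{C^\vee}_l$. The key feature distinguishing this case from type $C_n$ treated in \S \ref{ss:C} is the collapse of the two $W$-orbits $O_1$ and $O_3$ into a single $W^{C^\vee}$-orbit $O^{C^\vee}_s$; this is what forces the identification $k_1=k_3$, which in turn is responsible for the appearance of the common value $t_s$ in four of the five slots in the final substitution.

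Next I would translate these $k$-conditions into Noumi parameters via \eqref{eq:MN}. The equations $k_2=0$, $k_4=0$ yield $t_n/u_n=1$, $t_0/u_0=1$, hence $t_n=u_n$ and $t_0=u_0$ after choosing positive square roots. Combined with $k_1=k_3=k^{C^\vee}_s$, i.e.\ $t_n u_n=t_0 u_0=(t^{C^\vee}_s)^2$, this gives $t_n=u_n=t_0=u_0=t^{C^\vee}_s$. Finally $k_5=k^{C^\vee}_l$ becomes $t=t^{C^\vee}_l$. These are exactly the substitutions recorded in Table \ref{tab:sp} and in the statement of the proposition, once the superscript $C^\vee$ is suppressed.

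The one subtlety worth flagging is that $E_\mu$ is indexed by $\mu\in P_{C_n}$, whereas $E_\mu^{C^\vee}$ is naturally defined for $\mu\in P_{C_n}^\vee=P_{B_n}$, and $P_{C_n}\subsetneq P_{B_n}$ is a proper inclusion. This is why the proposition restricts to $\mu\in P_{C_n}$; on this sublattice the triangular property for $E_\mu^{C^\vee}$ uses the same finite Weyl group $W_0$ of type $C_n$ and is therefore preserved under the specialization. Beyond this bookkeeping point I do not anticipate any deeper difficulty, and the argument is then strictly parallel to \S \ref{ss:C}.
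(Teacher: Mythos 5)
Your proposal is correct and follows essentially the same route as the paper: identify $S\setminus S^{C^\vee}=O_2\sqcup O_4$ and $O^{C^\vee}_s=O_1\sqcup O_3$, impose $k_2,k_4\mapsto 0$ and $k_1,k_3\mapsto k^{C^\vee}_s$, $k_5\mapsto k^{C^\vee}_l$ via \eqref{eq:Delta:sp}, and translate through \eqref{eq:MN} to get $(t,t_0,t_n,u_0,u_n)\mapsto(t_l,t_s,t_s,t_s,t_s)$. The remark on $P_{C_n}\subset P_{B_n}$ matches the paper's own closing observation.
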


%%%%%%%%%%%%%%%%%%%%%%%%%%%%%%%%%%%%%%%%%%%%%%%%%%%%%%%%%%%%%%%%%%%%%%%%%%%%%%%%%%%%%%%%%%
\subsubsection{Type $BC_n$}\label{ss:BC}

For $n \in \bZ_{\ge 1}$, the following subset $S^{BC} \subset S$ forms 
the affine root system of type $BC_n$ in the sense of \cite[\S 1.3, (1.3.6)]{M}.
\begin{align}\label{eq:S^BC}
\begin{split}
 S^{BC} := O^{BC}_s \sqcup O^{BC}_m \sqcup O^{BC}_l, \quad
&O^{BC}_s := O_1 = \{\pm  \ep_i+ r    \mid 1 \le i \le n, \ r \in \bZ\}, \\
&O^{BC}_l := O_4 = \{\pm 2\ep_i+ 2r+1 \mid 1 \le i \le n, \ r \in \bZ\}, \\
&O^{BC}_m := O_5 = \{\pm  \ep_i \pm \ep_j + r \mid 1 \le i < j \le n, \, r \in \bZ\}.
\end{split}
\end{align}
Hereafter we assume $n \ge 2$ to make the argument compatible with that so far.
The Dynkin diagram is then given by 
\begin{align}\label{dynkin:BC}
 \dynkin[mark=o, edge length=.75cm, reverse arrows, 
         labels={}, labels*={0,1,2,,,n-1,n}] A[2]{even}
\end{align}

Recall the comment in the beginning of this \S \ref{ss:oth}.
We will not introduce a new extended affine Weyl group, 
but consider the group $W$ of type $(C_n^\vee,C_n)$ (see \eqref{eq:W}).
It acts on $S^{BC}$, and the $W$-orbits are given by 
$O^{BC}_s$, $O^{BC}_m$ and $O^{BC}_l$.
Hence, we already have the correspondence between the Macdonald parameters
of type $(C_n^\vee,C_n)$ and the $W$-orbits on $S^{B C}$.
Let us denote 
\begin{align}\label{eq:BC:kN}
 t^{BC}_s := q^{k_1}, \quad t^{BC}_m := q^{k_5}, \quad t^{BC}_l := q^{k_4},
\end{align}
which correspond to the $W$-orbits $O^{BC}_s$, $O^{BC}_m$ and $O^{BC}_l$, respectively.

Following \cite[(5.1.77)]{M}, we define the weight function $\Delta_{S^{BC},k}$ 
of type $BC_n$ to be the specialization of $\Delta_{S,k}$ of type $(C_n^\vee,C_n)$.
In other words, we take the right hand side of \eqref{eq:Delta:sp} as the definition:
\begin{align}
 \Delta^{BC} = \Delta_{S^{BC},k} := 
 \rst{\Delta_{S,k}}{k(a)-k(2a)=0 \ (a \in S \setminus S^{BC})}.
\end{align}
By \eqref{eq:S^BC}, we have $S \setminus S^{BC} = O_2 \sqcup O_3$ and 
$S^{BC} = O^{BC}_s \sqcup O^{BC}_m \sqcup O^{BC}_l = O_1 \sqcup O_5 \sqcup O_4$.
Them, we can see that the specialization to type $BC_n$ is given by 
\begin{align}
 k_2-0, \, k_3-k_4 \lmto 0.
\end{align}
Using \eqref{eq:MN} and \eqref{eq:BC:kN}, we can rewrite it as 
\begin{align}
&\tfrac{t_n}{u_n}, \, (t_0 u_0)/\tfrac{t_0}{u_0} \lmto 1, \quad 
 t_n u_n \lmto (t^{BC}_s)^2, \quad 
 \tfrac{t_0}{u_0} \lmto (t^{BC}_l)^2, \quad  t \lmto t^{BC}_m \\
&\iff (t,t_0,t_n,u_0,u_n) \lmto 
 \bigl(t^{BC}_m, (t^{BC}_l)^2, t^{BC}_s, 1, t^{BC}_s \bigr).
\end{align}

Now we suppress the superscript $BC$ in the parameters, and denote by 
\begin{align}
 E_\mu^{BC}(x;q,t_s,t_m,t_l), \quad \mu \in P_{C_n}
\end{align} 
the non-symmetric Macdonald polynomial of type $BC_n$ (Definition \ref{dfn:type}).
Then the conclusion is:

\begin{prp}
For any $\mu \in P_{C_n}$, we have
\begin{align}
 E_\mu^{BC}(x;q,t_s,t_m,t_l) = E_\mu(x;q,t_m,t_l^2,t_s,1,t_s).
\end{align}
\end{prp}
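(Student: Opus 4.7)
The plan is to adapt the template from \S\ref{ss:C} for type $C_n$ to the non-reduced setting. Since the non-symmetric Koornwinder polynomial $E_\mu(x)$ is uniquely determined by orthogonality and triangularity with respect to the inner product defined by $\Delta_{S,k}$, and since by \cite[(5.1.77)]{M} the non-symmetric Macdonald polynomial $E_\mu^{BC}(x)$ of type $BC_n$ is characterized analogously by the weight function $\Delta^{BC}$ that is itself \emph{defined} as the specialization of $\Delta_{S,k}$ via \eqref{eq:Delta:sp}, it suffices to verify that the parameter substitution
\begin{align}
 (t,t_0,t_n,u_0,u_n) \lmto \bigl(t^{BC}_m,\,(t^{BC}_l)^2,\,t^{BC}_s,\,1,\,t^{BC}_s\bigr)
\end{align}
realizes the identity $\Delta_{S,k}|_{k(a)-k(2a)=0\ (a \in S \setminus S^{BC})} = \Delta^{BC}$ in terms of the Noumi parameters.

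First I would read off the $W$-orbit structure on $S^{BC}$ inherited from the ambient action of the extended affine Weyl group $W$ of type $(C_n^\vee,C_n)$. Comparing \eqref{eq:S^BC} with \eqref{eq:S} gives $O^{BC}_s = O_1$, $O^{BC}_m = O_5$, and $O^{BC}_l = O_4$, while the complement decomposes as $S \setminus S^{BC} = O_2 \sqcup O_3$. This orbit identification is the geometric heart of the argument: it allows the Macdonald parameters $k_1,\ldots,k_5$ attached to the $W$-orbits on $S$ to be reused verbatim for the subsystem $S^{BC}$ through the assignments \eqref{eq:BC:kN}.

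Next, I would turn the orbit data into an explicit parameter specialization. Applying \eqref{eq:Delta:sp}, the defining condition $k(a)-k(2a)=0$ must be imposed for every $a \in O_2 \sqcup O_3$. For $a \in O_2$ we have $2a \notin S$, hence $k(2a)=0$, forcing $k_2 = 0$; for $a \in O_3$ we have $2a \in O_4$, forcing $k_3 = k_4$. The Macdonald--Noumi dictionary \eqref{eq:MN} then translates $k_2 = 0$ into $t_n/u_n = 1$ and $k_3 = k_4$ into $u_0^2 = 1$, which, under the positivity assumption $u_0,u_n > 0$, yield $u_0 = 1$ and $t_n = u_n$. Combining these with the remaining orbit-to-Noumi correspondences $t_n u_n \lrto (t^{BC}_s)^2$, $t_0/u_0 \lrto (t^{BC}_l)^2$, and $t \lrto t^{BC}_m$ produces precisely the specialization recorded in Table \ref{tab:sp}.

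The main subtlety, flagged at the start of \S\ref{ss:oth}, is that $S^{BC}$ is non-reduced and therefore does not carry its own extended affine Weyl group, so no affine Hecke algebra comparison as in Remark \ref{rmk:C} is available. On the other hand, this is precisely why the present case is lighter than the reduced cases: because $E_\mu^{BC}$ is \emph{defined} through the specialized weight function $\Delta^{BC}$ dictated by \eqref{eq:Delta:sp}, the orbit bookkeeping above is all that is genuinely required, and the uniqueness of the orthogonal-triangular basis for the specialized inner product concludes the proof.
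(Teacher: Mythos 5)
Your proposal is correct and follows essentially the same route as the paper: identify the $W$-orbit decomposition $S^{BC}=O_1\sqcup O_5\sqcup O_4$ and $S\setminus S^{BC}=O_2\sqcup O_3$, impose $k_2=0$ and $k_3=k_4$ via \eqref{eq:Delta:sp}, and translate through \eqref{eq:MN} and \eqref{eq:BC:kN} to get $(t,t_0,t_n,u_0,u_n)\mapsto(t_m,t_l^2,t_s,1,t_s)$, using that $E_\mu^{BC}$ is by definition characterized by the specialized weight function. Your explicit justification of $k(2a)=0$ for $a\in O_2$ and of $u_0^2=1$ from $k_3=k_4$ only makes explicit what the paper leaves implicit.
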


%%%%%%%%%%%%%%%%%%%%%%%%%%%%%%%%%%%%%%%%%%%%%%%%%%%%%%%%%%%%%%%%%%%%%%%%%%%%%%%%%%%%%%%%%%
\subsubsection{Type $D_n$}\label{ss:D}

For $n \in \bZ_{\ge 4}$, the following subset $S^D \subset S$ forms 
the affine root system of type $D_n$ in the sense of \cite[\S 1.3, (1.3.7)]{M}.
\begin{gather}\label{eq:S^D}
 S^D := O_5 = \{\pm \ep_i \pm \ep_j + r \mid 1 \le i < j \le n, \, r \in \bZ\}. \\
 \dynkin[mark=o, edge length=.75cm, reverse arrows, 
         labels={}, labels*={0,1,2,,,n-2,n-1,n}] D[1]{}
\end{gather}
%また以下で$S^D$の基底$\{a^D_0,\ldots,a^D_n\}$が定まる.
%\begin{align}
% a^D_0 := -\ep_1-\ep_2+1, \quad 
% a^D_j := a_j =  \ep_j-\ep_{j+1} \quad (1 \le j \le n-1), \quad 
% a^D_n := \ep_{n-1}+\ep_n.
%\end{align}
Using the Weyl group $W_0^D$ and the weight lattice 
\begin{align}\label{eq:PD}
 L' = P_{D_n} := 
 \bZ\ep_1 \oplus \dotsb \oplus \bZ\ep_n \oplus \bZ\thf(\ep_1+\dotsb+\ep_n)
\end{align}
of the finite root system of type $D_n$, the extended affine Weyl group is given by 
\begin{align}\label{eq:WD}
 W^D := W_0^D \ltimes t(L') = W_0^D \ltimes t(P_{D_n}).
\end{align}
It acts on $S^D$ by permutation, and there is a unique orbit.
Attaching $k^D \in \bR$ to this unique orbit, 
we define the label by $k^D(a):=k^D$ ($a \in S^D$), and introduce
\begin{align}\label{eq:D:kN}
 t^D := q^{k^D}.
\end{align}

The weight function is given by 
\begin{align}
 \Delta^D = \Delta_{S^D,k^D} := 
 \prod_{a \in (S^D)^+} \frac{1-q^{k^B(2a)} e^a}{1-q^{k^B(a)}e^a}.
\end{align}
The relation \eqref{eq:Delta:sp} yields 
\begin{align}
 \rst{\Delta_{S,k}}{k(a)-k(2a)=0 \ (a \in S \setminus S^D)} = \Delta_{S^D,k}.
\end{align}
Thus, the specialization to type $D_n$ is given by 
\begin{align}
 k(a)-k(2a) \lmto 0      \quad (a \in S \setminus S^D), \quad 
 k(a)-k(2a) \lmto k^D(a) \quad (a \in S^D).
\end{align}
By \eqref{eq:S^D}, we have $S \setminus S^D = O_1 \sqcup \cdots \sqcup O_4$ and 
$S^B = O_5$. Then, we can rewrite the specialization in terms of 
$k_1,\dotsc,k_5$ and $k^D$ as 
\begin{align}
 k_2-0, \, k_3-k_4, \, k_4-0 \lmto 0, \quad k_1 \lmto k^B_s, \quad k_5 \lmto k^B_l.
\end{align}
Using \eqref{eq:MN} and \eqref{eq:D:kN}, we have 
\begin{align}
&t_n u_n, \, t_n/u_n, \, t_0 u_0, \, t_0/u_0 \lmto 1, \quad 
 t \lmto t^D 
 \iff (t,t_0,t_n,u_0,u_n) \lmto \left(t^D, 1, 1, 1, 1\right).
\end{align}

We suppress the superscript $D$ in the parameters, and denote by 
\begin{align}
 E_\mu^D(x;q,t), \quad \mu \in P_{D_n}
\end{align} 
the non-symmetric Macdonald polynomial of type $D_n$ (Definition \ref{dfn:type}).
Since $P_{C_n} \subset P_{D_n}$, we have:

\begin{prp}\label{prp:D}
For any $\mu \in P_{C_n}$, we have
\begin{align}
 E_\mu^D(x;q,t) = E_\mu(x;q,t,1,1,1,1).
\end{align}
\end{prp}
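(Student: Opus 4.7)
The proof follows the same template established in Propositions \ref{prp:C} through the earlier results in \S \ref{ss:oth}. The plan is to combine three ingredients: the identification of $S^D$ as a subsystem of $S$ given in \eqref{eq:S^D}, the weight-function specialization formula \eqref{eq:Delta:sp}, and the uniqueness of the non-symmetric Koornwinder/Macdonald polynomials as the triangular orthogonal basis with respect to the inner product defined by the weight function.

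First, I would observe that $S \setminus S^D = O_1 \sqcup O_2 \sqcup O_3 \sqcup O_4$ and apply \eqref{eq:Delta:sp} to conclude that the specialization $k(a) - k(2a) \mapsto 0$ for $a \notin S^D$ takes $\Delta_{S,k}$ to $\Delta_{S^D,k}$. Rewriting in terms of the Macdonald parameters $k_1,\dotsc,k_5$ this becomes $k_2, k_4 \mapsto 0$, $k_3 - k_4 \mapsto 0$, $k_1 \mapsto 0$ (from the $O_1$ orbit, since $2a$ lies in $O_2$ already forced to trivial; more precisely, since only $O_5$ survives nontrivially), and $k_5 \mapsto k^D$. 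Translating via \eqref{eq:MN} and \eqref{eq:D:kN} immediately yields $(t,t_0,t_n,u_0,u_n) \mapsto (t^D,1,1,1,1)$, as recorded just before the proposition.

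Next, I would invoke the defining property \eqref{eq:Emu}: $E_\mu(x;q,t,t_0,t_n,u_0,u_n)$ is the unique element of $\bK[x^{\pm1}]$ characterized by the triangularity condition and orthogonality with respect to the inner product whose weight is $\Delta_{S,k}$. Under the above specialization of parameters the inner product degenerates to the one attached to $\Delta_{S^D,k^D}$, and the triangularity condition specializes correspondingly. Because the triangular-plus-orthogonal characterization is preserved by this parameter substitution (the Gram--Schmidt procedure defining both polynomial families behaves continuously in the parameters, and the leading monomials are unaffected), the resulting element must coincide with $E^D_\mu(x;q,t)$. Finally, the inclusion $P_{C_n} \subset P_{D_n}$ ensures that the weight $\mu$ on the left-hand side of the identity is an admissible index for $E^D_\mu$ as well, so the statement makes sense for every $\mu \in P_{C_n}$.

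The main obstacle, as in all the earlier cases, is not the computation itself but the careful bookkeeping of which $W$-orbit of $S$ is absorbed into which $W^D$-orbit of $S^D$, and then the translation back through the two distinct parameter conventions (Macdonald $k_r$ versus Noumi $t,t_0,t_n,u_0,u_n$) via \eqref{eq:MN}. For type $D_n$ this is actually the simplest instance: all four orbits $O_1,\dotsc,O_4$ are collapsed to trivial labels, leaving only the single $W^D$-orbit $O_5$ carrying the parameter $t^D$, so no subtle orbit-matching is required and the computation reduces to the direct substitution displayed above.
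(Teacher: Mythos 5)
Your proposal is correct and follows exactly the route the paper takes for all the reduced subsystems: identify $S \setminus S^D = O_1 \sqcup O_2 \sqcup O_3 \sqcup O_4$, apply the weight-function degeneration \eqref{eq:Delta:sp}, translate the conditions $k(a)-k(2a)\mapsto 0$ into $k_1,\dotsc,k_4 \mapsto 0$, $k_5 \mapsto k^D$, convert to the Noumi parameters via \eqref{eq:MN}, and conclude by the uniqueness of the polynomials determined by the inner product, using $P_{C_n} \subset P_{D_n}$. No substantive difference from the argument in \S\ref{ss:D}.
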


%%%%%%%%%%%%%%%%%%%%%%%%%%%%%%%%%%%%%%%%%%%%%%%%%%%%%%%%%%%%%%%%%%%%%%%%%%%%%%%%%%%%%%%%%%
\subsubsection{Type $(BC_n,C_n)$}\label{ss:BCC}

For $n \in \bZ{\ge 1}$, the following subset $S^{BC,C} \subset S$ forms 
the affine root system of type $(BC_n,C_n)$ in the sense of \cite[\S 1.3, (1.3.15)]{M}.
\begin{align}\label{eq:S^BCC}
\begin{split}
&S^{BC,C} := O^{BC,C}_s \sqcup O^{BC,C}_m \sqcup O^{BC,C}_l, \\
&O^{BC,C}_s := O_1 = \{\pm  \ep_i+ r  \mid 1 \le i \le n, \ r \in \bZ\}, \\
&O^{BC,C}_l := O_2 \sqcup O_4 = \{\pm 2\ep_i+ r \mid 1 \le i \le n, \ r \in \bZ\}, \\
&O^{BC,C}_m := O_5 = \{\pm  \ep_i \pm \ep_j + r \mid 1 \le i < j \le n, \, r \in \bZ\}.
\end{split} \\
&\dynkin[mark=o, edge length=.75cm, reverse arrows, 
         labels*={,,,,,,*}, labels={0,1,2,,,n-1,n}] A[2]{even}
\end{align}
The diagram is for $n \ge 2$, and hereafter we assume this condition.
The mark $*$ above the index $n$ implies that there is a basis $\{a_i^{BC,C}\}_{i=0}^n$
such that $a_n^{BC,C}, 2 a_n^{BC,C} \in S^{BC,C}$.
%Hereafter we assume $n \ge 2$. 
%また以下で$S^{BC,C}$の基底$\{a^{BC,C}_0,\ldots,a^{BC,C}_n\}$が定まる.
%\begin{align}
% a^{BC,C}_0 := -2\ep_1+1, \quad 
% a^{BC,C}_j := a_j =  \ep_j-\ep_{j+1} \quad (1 \le j \le n-1), \quad 
% a^{BC,C}_n := a_n =  \ep_n.
%\end{align}
%$a^{BC,C}_0 \in O^{BC,C}_l$, $a^{BC,C}_j \in O^{BC}_m$, $a^{BC,C}_n \in O^{BC}_s$に注意する.
There are three $W$-orbits $O^{BC,C}_s$, $O^{BC,C}_m$ and $O^{BC,C}_l$.
We introduce the parameters 
\begin{align}\label{eq:BC,C:kN}
 t^{BC,C}_s := q^{k_1}, \quad t^{BC,C}_m := q^{k_5}, \quad t^{BC,C}_l := q^{k_2},
\end{align}
which correspond to the $W$-orbit $O^{BC,C}_s$, $O^{BC,C}_m$ 
and $O^{BC,C}_l$, respectively.

Similarly as in the previous \S \ref{ss:BC}, the weight function $\Delta_{S^{BC,C},k}$
of type $(BC_n,C_n)$ is defined by the specialization of $\Delta_{S,k}$ as 
\begin{align}
 \Delta^{BC,C} = \Delta_{S^{BC,C},k} := 
 \rst{\Delta_{S,k}}{k(a)-k(2a)=0 \ (a \in S \setminus S^{BC,C})}.
\end{align}
By \eqref{eq:S^BCC}, we have $S \setminus S^{BC,C} = O_3$, $O^{BC,C}_l=O_2 \sqcup O_4$, 
which implies that the specialization to type $(BC_n,C_n)$ is given by 
\begin{align}
 k_3-k_4 \lmto 0, \quad k_2 \lmto k_4.
\end{align}
Using \eqref{eq:MN} and \eqref{eq:BC,C:kN}, and assuming $u_0>0$, we can rewrite it as 
\begin{align}
&(t_0 u_0)/\tfrac{t_0}{u_0}  \lmto 1, \quad 
 t_n u_n \lmto (t^{BC,C}_s)^2, \quad 
 \tfrac{t_n}{u_n}, \, \tfrac{t_0}{u_0} \lmto (t^{BC,C}_l)^2, \quad  
 t \lmto t^{BC,C}_m \\
&\iff (t,t_0,t_n,u_0,u_n) \lmto \bigl(
 t^{BC,C}_m, (t^{BC,C}_l)^2, t^{BC,C}_s t^{BC,C}_l, 1, t^{BC,C}_s/t^{BC,C}_l \bigr).
\end{align}

We suppress the superscript $BC,C$ in the parameters, and denote by 
\begin{align}
 E_\mu^{BC,C}(x;q,t_s,t_m,t_l), \quad \mu \in P_{C_n}
\end{align} 
the non-symmetric Macdonald polynomial of type $(BC_n,B_n)$ (Definition \ref{dfn:type}).
The conclusion of this \S \ref{ss:BCC} is:

\begin{prp}
For any $\mu \in P_{C_n}$, we have
\begin{align}
 E_\mu^{BC,C}(x;q,t_s,t_m,t_l) = E_\mu(x;q,t_m,t_l^2,t_s t_l,1,t_s/t_l).
\end{align}
\end{prp}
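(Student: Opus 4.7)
The plan is to follow the identical template used in \S\ref{ss:BC} for the parallel non-reduced subsystem $BC_n$. As noted at the beginning of \S\ref{ss:oth}, the non-symmetric Macdonald polynomial for a non-reduced system is \emph{defined} as the specialization of the Koornwinder polynomial via the weight-function identity \eqref{eq:Delta:sp}, so the task reduces entirely to determining which Noumi-parameter values realize the correct weight function $\Delta^{BC,C}$; there is no separate uniqueness argument to invoke.

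First I would read off the $W$-orbit decomposition from \eqref{eq:S^BCC}: one has $S \setminus S^{BC,C} = O_3$, while the three $W$-orbits inside $S^{BC,C}$ are $O^{BC,C}_s = O_1$, $O^{BC,C}_m = O_5$, and $O^{BC,C}_l = O_2 \sqcup O_4$. Applying \eqref{eq:Delta:sp} then imposes two kinds of constraints on the label $k$ of type $(C_n^\vee,C_n)$. The \emph{deletion} constraint $k(a) - k(2a) = 0$ for $a \in O_3$ forces $k_3 = k_4$, since $2a \in O_4$ when $a \in O_3$. The \emph{fusion} constraint, that the single orbit $O^{BC,C}_l = O_2 \sqcup O_4$ must receive one common label $k^{BC,C}_l$, forces $k_2 = k_4$. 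The remaining identifications $k_1 = k^{BC,C}_s$ and $k_5 = k^{BC,C}_m$ are just renamings as in \eqref{eq:BC,C:kN}.

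Second I would translate these relations into the Noumi parameters using \eqref{eq:MN}. The equation $k_3 = k_4$ reads $t_0 u_0 = t_0/u_0$, so under positivity of $u_0$ we get $u_0 = 1$, whence $t_0 = t_0/u_0 = (t^{BC,C}_l)^2$. The equation $k_2 = k_4$ reads $t_n/u_n = t_0/u_0 = (t^{BC,C}_l)^2$, and combining with $t_n u_n = (t^{BC,C}_s)^2$ (from the renaming of $k_1$) gives $t_n = t^{BC,C}_s t^{BC,C}_l$ and $u_n = t^{BC,C}_s/t^{BC,C}_l$. Finally $t = t^{BC,C}_m$ from the renaming of $k_5$. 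Dropping the superscripts, this is precisely the row for $(BC_n,C_n)$ in Table \ref{tab:sp}, so substituting these values into $E_\mu(x; q, t, t_0, t_n, u_0, u_n)$ yields the polynomial associated to the weight function $\Delta^{BC,C}$, which by definition equals $E_\mu^{BC,C}(x; q, t_s, t_m, t_l)$.

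I do not anticipate any genuine obstacle. The only subtlety worth emphasizing is the \emph{fusion} step $k_2 = k_4$: two distinct $W$-orbits of $S$ merge into the single orbit $O^{BC,C}_l$ of $S^{BC,C}$, so the specialization must equate the two corresponding Macdonald labels in addition to satisfying the deletion condition from \eqref{eq:Delta:sp}. This is exactly the phenomenon already handled in \S\ref{ss:C} for type $C_n$, and the bookkeeping is completely parallel.
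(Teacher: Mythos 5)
Your proposal is correct and follows essentially the same route as the paper: read off $S\setminus S^{BC,C}=O_3$ and $O^{BC,C}_l=O_2\sqcup O_4$ from \eqref{eq:S^BCC}, impose the deletion condition $k_3=k_4$ and the fusion condition $k_2=k_4$, and translate via \eqref{eq:MN} and \eqref{eq:BC,C:kN} to get $(t,t_0,t_n,u_0,u_n)\mapsto(t_m,t_l^2,t_st_l,1,t_s/t_l)$. Your remark that no separate uniqueness argument is needed because the non-reduced case is defined by specializing the weight function is exactly the point the paper makes at the start of \S\ref{ss:oth}.
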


%%%%%%%%%%%%%%%%%%%%%%%%%%%%%%%%%%%%%%%%%%%%%%%%%%%%%%%%%%%%%%%%%%%%%%%%%%%%%%%%%%%%%%%%%%
\subsubsection{Type $(C^\vee_n,BC_n)$}\label{ss:CBC}

For $n \in \bZ_{\ge 1}$, the following subset $S^{C^\vee,BC} \subset S$ forms 
the affine root system of type $(BC_n,C_n)$ in the sense of \cite[\S 1.3, (1.3.16)]{M}.
\begin{align}\label{eq:S^CvBC}
\begin{split}
&S^{C^\vee,BC} := O^{C^\vee,BC}_s \sqcup O^{C^\vee,BC}_m \sqcup O^{C^\vee,BC}_l, \\
&O^{C^\vee,BC}_s := O_1 \sqcup O_3 = 
 \{\pm  \ep_i+ \thf r  \mid 1 \le i \le n, \ r \in \bZ\}, \\
&O^{C^\vee,BC}_l := O_2 = \{\pm 2\ep_i+ 2r \mid 1 \le i \le n, \ r \in \bZ\}, \\
&O^{C^\vee,BC}_m := O_5 = \{\pm  \ep_i \pm \ep_j + r \mid 1 \le i < j \le n, \, r \in \bZ\}.
\end{split}
\end{align}
Hereafter we assume $n \ge 2$. Then the Dynkin diagram is given by 
\begin{align}
 \dynkin[mark=o, edge length=.75cm, reverse arrows,
         labels={0,1,2,,n-1,n}, labels*={,,,,,*}] C[1]{}
\end{align}
%また以下で$S^{BC,C}$の基底$\{a^{BC,C}_0,\ldots,a^{BC,C}_n\}$が定まる.
%\begin{align}
% a^{BC,C}_0 := -2\ep_1+1, \quad 
% a^{BC,C}_j := a_j =  \ep_j-\ep_{j+1} \quad (1 \le j \le n-1), \quad 
% a^{BC,C}_n := a_n =  \ep_n.
%\end{align}
%$a^{BC,C}_0 \in O^{BC,C}_l$, $a^{BC,C}_j \in O^{BC}_m$, 
%$a^{BC,C}_n \in O^{BC}_s$に注意する.
There are three $W$-orbits 
$O^{C^\vee,BC}_s$, $O^{C^\vee,BC}_m$ and $O^{C^\vee,BC}_l$, and 
the parameters are defined to be 
\begin{align}\label{eq:Cv,BC:kN}
 t^{C^\vee,BC}_s := q^{k_1}, \quad t^{C^\vee,BC}_m := q^{k_5}, \quad 
 t^{C^\vee,BC}_l := q^{k_2}.
\end{align}

The weight function of type $(C^\vee_n,BC_n)$ is defined by 
\begin{align}
 \Delta^{C^\vee,BC} = \Delta_{S^{C^\vee,BC},k} := 
 \rst{\Delta_{S,k}}{k(a)-k(2a)=0 \ (a \in S \setminus S^{C^\vee,BC})}.
\end{align}
By \eqref{eq:S^CvBC}, we have $S \setminus S^{C^\vee,BC} = O_4$ and 
$O^{C^\vee,BC}=O_1 \sqcup O_3$, which implies that 
\begin{align}
 k_4-0 \lmto 0, \quad k_1 \lmto k_3
\end{align}
give the desired specialization.
Using \eqref{eq:MN} and \eqref{eq:Cv,BC:kN}, we can rewrite it as 
\begin{align}
&t_0/u_0 \lmto 1, \quad 
 t_0 u_0, \, t_n u_n \lmto (t^{C^\vee,BC}_s)^2, \quad 
 t_n/u_n \lmto (t^{C^\vee,BC}_l)^2, \quad  t \lmto t^{C^\vee,BC}_m \\
&\iff (t,t_0,t_n,u_0,u_n) \lmto 
 \bigl(t^{C^\vee,BC}_m, t^{C^\vee,BC}_s, t^{C^\vee,BC}_s t^{C^\vee,BC}_l, 
       t^{C^\vee,BC}_s, t^{C^\vee,BC}_s/t^{C^\vee,BC}_l \bigr).
\end{align}

We suppress the superscript $C^\vee,BC$ in the parameters, and denote by 
\begin{align}
 E_\mu^{C^\vee,BC}(x;q,t_s,t_m,t_l), \quad \mu \in P_{C_n}
\end{align} 
the non-symmetric Macdonald polynomial of type $(C^\vee_n,BC_n)$ 
(Definition \ref{dfn:type}). The conclusion of this \S \ref{ss:BCC} is:

\begin{prp}
For any $\mu \in P_{C_n}$, we have
\begin{align}
 E_\mu^{C^\vee,BC}(x;q,t_s,t_m.t_l) = E_\mu(x;q,t_m,t_s,t_s t_l,t_s,t_s/t_l).
\end{align}
\end{prp}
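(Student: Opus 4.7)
The plan is to follow the same template that has been applied to all previous subsystems in \S\ref{ss:oth}, relying on the characterization of the non-symmetric polynomials by the inner product induced from the weight function, together with the specialization identity \eqref{eq:Delta:sp}. Since $E_\mu^{C^\vee,BC}$ is defined via the weight function $\Delta_{S^{C^\vee,BC},k}$ and $E_\mu$ via $\Delta_{S,k}$, it suffices to determine the specialization of Noumi parameters that identifies the former with a restriction of the latter.

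First, I would read off from \eqref{eq:S^CvBC} the complement $S \setminus S^{C^\vee,BC} = O_4$, and note the $W$-orbit assignments $O^{C^\vee,BC}_s = O_1 \sqcup O_3$, $O^{C^\vee,BC}_l = O_2$, $O^{C^\vee,BC}_m = O_5$. Then \eqref{eq:Delta:sp} gives the specialization requirement $k(a) - k(2a) = 0$ for $a \in O_4$, which in the Macdonald-parameter notation of \eqref{eq:q^k} reads $k_4 = 0$. Compatibility with the labels on $O^{C^\vee,BC}_s$ forces $k_1 = k_3$ (both equal to $k^{C^\vee,BC}_s$), while $k_2$ and $k_5$ specialize to $k^{C^\vee,BC}_l$ and $k^{C^\vee,BC}_m$ respectively, matching \eqref{eq:Cv,BC:kN}.

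Next, I would transport these conditions to the Noumi parameters via \eqref{eq:MN}. Under the assumption $u_0>0$, the condition $k_4=0$ becomes $t_0/u_0 = 1$; the condition $k_1=k_3$ becomes $t_n u_n = t_0 u_0$, both equal to $(t^{C^\vee,BC}_s)^2$; while $k_2 \mapsto k^{C^\vee,BC}_l$ and $k_5 \mapsto k^{C^\vee,BC}_m$ become $t_n/u_n = (t^{C^\vee,BC}_l)^2$ and $t = t^{C^\vee,BC}_m$. Solving this linear system in $\log$-variables yields exactly
\begin{align}
 (t,t_0,t_n,u_0,u_n) = \bigl(t^{C^\vee,BC}_m, \, t^{C^\vee,BC}_s, \, t^{C^\vee,BC}_s t^{C^\vee,BC}_l, \, t^{C^\vee,BC}_s, \, t^{C^\vee,BC}_s/t^{C^\vee,BC}_l\bigr),
\end{align}
which is the tuple appearing in the claimed identity.

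Finally, the conclusion follows from uniqueness: under this specialization the weight function $\Delta_{S,k}$ degenerates to $\Delta_{S^{C^\vee,BC},k}$, hence the induced inner product on $\bK[x^{\pm 1}]$ agrees with the one defining $E_\mu^{C^\vee,BC}$, and the triangular property that uniquely characterizes the non-symmetric Koornwinder polynomial coincides with that characterizing the non-symmetric Macdonald polynomial of type $(C_n^\vee,BC_n)$ for $\mu \in P_{C_n} \subset P$. I do not anticipate a substantive obstacle: the only subtle point is ensuring that the specialization is well defined (no cancellation with poles in the weight function) and that the resulting orbits on $S^{C^\vee,BC}$ are compatible with the chosen labels, both of which are already handled in Macdonald's framework and verified in the parallel computations of \S\ref{ss:BCC} and \S\ref{ss:BC}; the present case is structurally identical, only the orbit omitted changes from $O_3$ to $O_4$.
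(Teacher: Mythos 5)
Your proposal is correct and follows essentially the same route as the paper: identify $S\setminus S^{C^\vee,BC}=O_4$ and the orbit mergers, impose $k_4\mapsto 0$ and $k_1\mapsto k_3$ via \eqref{eq:Delta:sp}, translate through \eqref{eq:MN} and \eqref{eq:Cv,BC:kN}, and solve for the Noumi parameters. The only (harmless) difference in emphasis is that for the non-reduced types the paper takes the restricted weight function as the \emph{definition} of $\Delta_{S^{C^\vee,BC},k}$, so the final uniqueness-of-orthogonal-basis step you invoke is essentially built in.
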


%%%%%%%%%%%%%%%%%%%%%%%%%%%%%%%%%%%%%%%%%%%%%%%%%%%%%%%%%%%%%%%%%%%%%%%%%%%%%%%%%%%%%%%%%%
\subsubsection{Types $(C_2,C^\vee_2)$ and $(B^\vee_n,B_n)$}\label{ss:BB}

The affine root systems of type $(C_2,C^\vee_2)$ and of type $(B^\vee_n,B_n)$ 
with $n \in \bZ_{\ge 3}$ in the sense of \cite[\S 1.3, (1.3.17)]{M}
are given by the following subset $S^{B^\vee,B} \subset S$.
\begin{align}\label{eq:S^BvB}
\begin{split}
 S^{B^\vee,B} :=
&O^{B^\vee,B}_s \sqcup O^{B^\vee,B}_m \sqcup O^{B^\vee,B}_l, \\
&O^{B^\vee,B}_s := O_1 = \{\pm  \ep_i+ r  \mid 1 \le i \le n, \ r \in \bZ\}, \\
&O^{B^\vee,B}_l := O_2 = \{\pm 2\ep_i+ 2r \mid 1 \le i \le n, \ r \in \bZ\}, \\
&O^{B^\vee,B}_m := O_5 = \{\pm  \ep_i \pm \ep_j + r \mid 1 \le i < j \le n, \, r \in \bZ\}.
\end{split}
\end{align}
In the case $n \ge 3$, the Dynkin diagram is given by 
\begin{align}
&\dynkin[extended, mark=o, edge length=.75cm, 
         labels*={,,,,,,*}, labels={0,1,2,,,n-1,n}] B{}
\end{align}
The $W$-orbits are $O^{B^\vee,B}_s$, $O^{B^\vee,B}_m$ and $O^{B^\vee,B}_l$.
The corresponding parameters are defined to be
\begin{align}\label{eq:BvB:kN}
 t^{B^\vee,B}_s := q^{k_1}, \quad t^{B^\vee,B}_m := q^{k_5}, \quad 
 t^{B^\vee,B}_l := q^{k_2}.
\end{align}

The weight function $\Delta_{S^{B^\vee,B},k}$ is defined by
\begin{align}
 \Delta^{B^\vee,B} = \Delta_{S^{B^\vee,B},k} := 
 \rst{\Delta_{S,k}}{k(a)-k(2a)=0 \ (a \in S \setminus S^{B^\vee,B})}.
\end{align}
By \eqref{eq:S^BvB}, we have $S \setminus S^{B^\vee,B} = O_3 \sqcup O_4$, 
which implies that 
\begin{align}
 k_3-k_4, \, k_4-0 \lmto 0
\end{align}
gives the specialization to type $(C_2,C^\vee_2)$ and $(B^\vee_n,B_n)$.
Using \eqref{eq:MN} and \eqref{eq:BvB:kN}, we can rewrite it as 
\begin{align}
&t_0 u_0, \, t_0/u_0 \lmto 1, \quad 
 t_n u_n \lmto (t^{B^\vee,B}_s)^2, \quad 
 t_n/u_n \lmto (t^{B^\vee,B}_l)^2, \quad  t \lmto t^{B^\vee,B}_m \\
&\iff (t,t_0,t_n,u_0,u_n) \lmto \bigl(t^{B^\vee,B}_m, 1, 
 t^{B^\vee,B}_s t^{B^\vee,B}_l, 1, t^{B^\vee,B}_s/t^{B^\vee,B}_l \bigr).
\end{align}

We suppress the superscript $B^\vee,B$ in the parameters, and denote by 
\begin{align}
 E_\mu^{B^\vee,B}(x;q,t_s,t_m,t_l), \quad \mu \in P_{C_n}
\end{align} 
the non-symmetric Macdonald polynomial of types $(C_2,C_2^\vee)$ and $(B^\vee_n,B_n)$ 
(Definition \ref{dfn:type}). The conclusion of this \S \ref{ss:BCC} is:

\begin{prp}\label{prp:BvB}
For any $\mu \in P_{C_n}$, we have
\begin{align}
 E_\mu^{B^\vee,B}(x;q,t_s,t_m.t_l) = E_\mu(x;q,t_m,1,t_s t_l,1,t_s/t_l).
\end{align}
\end{prp}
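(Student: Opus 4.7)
The plan is to mirror the template established in the earlier subsections \S\ref{ss:BC}, \S\ref{ss:BCC}, and \S\ref{ss:CBC}, since the affine root system of type $(B^\vee_n, B_n)$ is non-reduced and, as remarked at the beginning of \S\ref{ss:oth}, there is no separate extended affine Weyl group to introduce. The argument rests on the fact that the non-symmetric Koornwinder polynomial $E_\mu(x)$ is uniquely characterized by orthogonality with respect to the inner product defined by $\Delta_{S,k}$ together with the triangularity property, and that the non-symmetric Macdonald polynomial of type $(B^\vee_n, B_n)$ is \emph{defined} (as in \cite[\S 5.1]{M}) via the weight function $\Delta_{S^{B^\vee,B},k}$. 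Therefore, once the Noumi-parameter specialization in the statement is shown to send $\Delta_{S,k}$ to $\Delta_{S^{B^\vee,B},k}$, the equality of the two polynomials follows from uniqueness.

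The first step is to read off from \eqref{eq:S^BvB} that $S \setminus S^{B^\vee,B} = O_3 \sqcup O_4$, so by the restriction formula \eqref{eq:Delta:sp} we need
\begin{align}
 k(a) - k(2a) \lmto 0 \quad (a \in O_3), \qquad k(a) - k(2a) \lmto 0 \quad (a \in O_4),
\end{align}
which in terms of Macdonald parameters is $k_3 - k_4 \lmto 0$ and $k_4 - 0 \lmto 0$. The second step is to translate these two scalar conditions into the Noumi variables using \eqref{eq:MN}: the former becomes $(t_0 u_0)/(t_0/u_0) \lmto 1$ and the latter $t_0/u_0 \lmto 1$, which under positivity of $t_0, u_0$ are equivalent to $t_0, u_0 \lmto 1$. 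The third step is to identify the surviving three parameters with the orbit parameters \eqref{eq:BvB:kN}: combining \eqref{eq:MN} with $t_s^{B^\vee,B} = q^{k_1}$, $t_l^{B^\vee,B} = q^{k_2}$, and $t_m^{B^\vee,B} = q^{k_5}$ produces $t_n u_n \lmto (t_s^{B^\vee,B})^2$, $t_n / u_n \lmto (t_l^{B^\vee,B})^2$ and $t \lmto t_m^{B^\vee,B}$, so that $t_n \lmto t_s^{B^\vee,B} t_l^{B^\vee,B}$ and $u_n \lmto t_s^{B^\vee,B}/t_l^{B^\vee,B}$, matching the claimed specialization.

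There is essentially no conceptual obstacle here; the only thing to watch out for is orbit bookkeeping (the non-reduced orbits $O_3, O_4$ are the ``thin'' ones that disappear, and the correct short/long/middle roles must be matched to the $W$-orbits on $S^{B^\vee,B}$, which are $O_1, O_2, O_5$ respectively). Once the specialization is verified at the level of $\Delta_{S,k}$, the conclusion $E_\mu^{B^\vee,B}(x;q,t_s,t_m,t_l) = E_\mu(x;q,t_m,1,t_s t_l,1,t_s/t_l)$ follows verbatim as in the preceding propositions, and the analogous identity for the symmetric polynomials follows in parallel for dominant $\mu$. Since this treats type $(C_2, C_2^\vee)$ and $(B^\vee_n, B_n)$ uniformly via the same subset $S^{B^\vee,B}$ of \eqref{eq:S^BvB}, a single calculation covers both cases.
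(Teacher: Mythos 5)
Your proposal is correct and follows essentially the same route as the paper: identify $S\setminus S^{B^\vee,B}=O_3\sqcup O_4$, impose $k_3-k_4\mapsto 0$ and $k_4\mapsto 0$ via the restriction formula \eqref{eq:Delta:sp}, translate through \eqref{eq:MN} to get $t_0,u_0\mapsto 1$, and match the surviving orbits $O_1,O_2,O_5$ to $t_s,t_l,t_m$ via \eqref{eq:BvB:kN} to obtain $t_n\mapsto t_s t_l$, $u_n\mapsto t_s/t_l$, $t\mapsto t_m$. The orbit bookkeeping and the appeal to the definition of the non-reduced-type polynomial as a specialization of the Koornwinder polynomial both agree with the paper's argument.
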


%%%%%%%%%%%%%%%%%%%%%%%%%%%%%%%%%%%%%%%%%%%%%%%%%%%%%%%%%%%%%%%%%%%%%%%%%%%%%%%%%%%%%%%%%%
%%%%%%%%%%%%%%%%%%%%%%%%%%%%%%%%%%%%%%%%%%%%%%%%%%%%%%%%%%%%%%%%%%%%%%%%%%%%%%%%%%%%%%%%%%
\subsection{Relation to Koornwinder's specializations in admissible pairs}

As mentioned in \S \ref{s:intro}, in the original theory \cite{Mp}, 
Macdonald used admissible pairs to formulate 
his family of multivariate orthogonal polynomials for general root systems.
Here, an admissible pair means a pair $(R,S)$ of root systems satisfying
the following conditions.
\begin{itemize}[nosep]
\item 
Both $R$ and $S$ span the common finite-dimensional Euclidean space $V$.
\item
$S$ is a reduced.
\item
The Weyl groups are identical, i.e., $W_R = W_S$.
\end{itemize}

In \cite[\S 6.1]{K}, Koornwinder obtained Macdonald polynomials 
of the admissible pairs 
\begin{align}
 (R,S) = (R_{BC_n},S_{B_n}), \ (R_{BC_n},S_{C_n})
\end{align}
by specializing the parameters in his polynomials.
The parameters in \cite{K} are denoted as
\begin{align}
 a,b,c,d,t,q,
\end{align}
and we call them the Koornwinder parameters.
The finite root systems $R_{BC_n}$, $S_{B_n}$ and $S_{C_n}$ are 
\begin{align}
 R_{BC_n}&:= \{\pm \ep_i \mid 1 \le i \le n \} \sqcup \{\pm 2\ep_i \mid 1 \le i \le n \}
             \sqcup \{\pm \ep_i \pm \ep_j \mid 1 \le i < j \le n \}, \notag \\
 S_{B_n} &:= \{\pm \ep_i \mid 1 \le i \le n \} \sqcup
             \{\pm \ep_i \pm \ep_j \mid 1 \le i < j \le n \}, \notag \\
 S_{C_n} &:= \{\pm \ep_i \mid 1 \le i \le n \} \sqcup
             \{\thf(\pm \ep_i \pm \ep_j) \mid 1 \le i < j \le n \}.
 \label{eq:adp:S_C}
\end{align}
Using them, the specializations in \cite[\S 6.1]{K} are described as 
\begin{align}
\label{eq:K:BCB}
(R_{BC_n},S_{B_n}): \ 
&(a,b,c,d,t,q) \lmto (q^{1/2},-q^{1/2},a_B b_B^{1/2}, -b_B^{1/2},t_B,q), \\
\label{eq:K:BCC}
(R_{BC_n},S_{C_n}): \  
&(a,b,c,d,t,q) \lmto (a_C b_C^{1/2}, q a_C b_C^{1/2}, -b_C^{1/2}, -q b_C^{1/2}, t_C, q^2).
\end{align}
There are only given these results in \cite[\S 6.1]{K} .
We guess that they are derived by the comparison of the weight functions of inner products,
as we did in the previous \S \ref{ss:C} and \S \ref{ss:oth}.

In \cite[p.54]{N}, Noumi gave the correspondence between
the Noumi parameters $q,t,t_0,t_n,u_0,u_n$ and the Koornwinder parameters $a,b,c,d,t,q$.
The correspondence is that $q$ and $t$ are common, and 
\begin{align}
 (t_0,t_n,u_0,u_n) = (-c d/q,-a b,-c/d,-a/b). 
%t_n \simeq -a b, \quad u_n \simeq -a/b, \quad t_0 \simeq -c d/q, \quad u_0 \simeq -c/d.
\end{align}
We can then rewrite the specialization \eqref{eq:K:BCB} to the admissible pair 
$(R_{BC_n},S_{B_n})$ as 
\begin{align}
 (t,t_0,t_n,u_0,u_n) \lmto (t_B,1,a_B b_B,1,a_B).
\end{align}
Thus, setting $t_B=t^{B^\vee,B}_m$, $a_B=t^{B^\vee,B}_s/t^{B^\vee,B}_l$ and
$b_B=(t^{B^\vee,B}_l)^2$, we see that it coincides with the specialization to 
type $(B_n^\vee,B_n)$ in \S \ref{ss:BB}.

Let us remark that a similar rewriting of the specialization 
\eqref{eq:K:BCC} to the admissible pair $(R_{BC_n},S_{C_n})$ 
does not have a corresponding one in Table \ref{tab:sp}.
It seems to be due to that the root system $S_{C_n}$ in \eqref{eq:adp:S_C}
cannot be treated in the formulation of \cite{M}.

%Finally, we comment on the reduced case.
%In the formulation of \cite{Mp}, 
%we have the Macdonald polynomials associated to admissible pairs
%\begin{align}
% (R,S) = (B_n,B_n), \quad (C_n,C_n).
%\end{align}
%associated to the admissible pair 

%%%%%%%%%%%%%%%%%%%%%%%%%%%%%%%%%%%%%%%%%%%%%%%%%%%%%%%%%%%%%%%%%%%%%%%%%%%%%%%%%%%%%%%%%%
%%%%%%%%%%%%%%%%%%%%%%%%%%%%%%%%%%%%%%%%%%%%%%%%%%%%%%%%%%%%%%%%%%%%%%%%%%%%%%%%%%%%%%%%%%
\section{Specialization in Ram-Yip type formula}\label{s:RY}

In this section, we give a partial check of the specialization Table \ref{tab:sp}
in the level of \emph{Ram-Yip type formulas}.
Precisely speaking, we show that the non-symmetric Koornwinder polynomial degenerates to 
the non-symmetric Macdonald polynomials of types $B,C,D$ in the sense of \cite{RY}
by the specializations given in Table \ref{tab:sp}, 
using explicit Ram-Yip type formulas of those polynomials. 

Let us explain what we mean by the word \emph{Ram-Yip type formulas}.
In \cite{RY}, Ram and Yip derived explicit formulas of non-symmetric 
Macdonald polynomials of reduced affine root systems using alcove walks.
Their argument is designed to work in general setting, and the details are 
later given by Orr and Shimozono in \cite{OS}, which derives among many results 
an explicit formula of the non-symmetric Koornwinder polynomial. We call 
all of these formulas Ram-Yip type formulas of non-symmetric Macdonald polynomials.

A caution is now in order. The realization of affine root systems in \cite{RY} 
is different from our default one in \cite{M}.
For distinction, we denote by $S^{X,\RY}$ the affine root system of type $X$
used in \cite{RY}, and call the non-symmetric Macdonald polynomials of type $X$
treated in loc.\ cit. the polynomial \emph{of Ram-Yip type $X$}.

Let us summarize the results given in this \S \ref{s:RY} 
in the following Table \ref{tab:sp:RY}, 

\begin{table}[htbp]
\centering
{\setlength{\extrarowheight}{1pt}%
 \begin{tabular}{ll||ccccc}
             &                 &$t$         &$t_0$ &$t_n$       &$u_0$ &$u_n$ \\ \hline
 $B_n^{\RY}$ &\S \ref{ss:RY:B} &$t_m^{\RY}$ &$1$   &$t_l^{\RY}$ &$1$   &$t_l^{\RY}$ \\  
       $B_n$ &                 &$t_l$       &$1$   &$t_s$       &$1$   &$t_s$ \\ \hline
 $C_n^{\RY}$ &\S \ref{ss:RY:C} &$t_m^{\RY}$ &$1$   &$t_s^{\RY}$ &$1$   &$1$   \\
 $B_n^\vee $ &                 &$t_s$       &$1$   &$t_l^2$     &$1$   &$1$   \\ \hline
       $D_n$ &\S \ref{ss:RY:D} &$t$         &$1$   &$1$         &$1$   &$1$
 \end{tabular}}
\caption{Specialization table for Ram-Yip formulas}
\label{tab:sp:RY}
\end{table}

As mentioned above, we treat the types $B_n$, $C_n$ and $D_n$ in the sense of \cite{RY},
each in \S \ref{ss:RY:B}, \S \ref{ss:RY:C} and \S\ref{ss:RY:D}, respectively.
Since the types $B_n$ and $C_n$ have discrepancy from those in our default \cite{M},
we use the symbols $B_n^{\RY}$ and $C_n^{\RY}$ in Table \ref{tab:sp:RY}.
The type $D_n$ has no discrepancy, and we use the symbol $D_n$.
The $B_n^{\RY}$ row in Table \ref{tab:sp:RY} indicates the specialization of 
the Noumi parameters to obtain the non-symmetric polynomial of Ram-Yip type $B_n$.
More explicitly, denoting the latter by $E^{B,\RY}_\mu(x)$, we have 
\begin{align}
 E_\mu(x;q,t_m^{\RY},1,t_l^{\RY},1,t_l^{\RY}) = E^{B,\RY}_\mu(x;q,t_m^{\RY},t_l^{\RY}).
\end{align}
This equality will be shown in Proposition \ref{prp:RY:B}.
The $B_n$ row in Table \ref{tab:sp:RY} is a copy from 
the specialization Table \ref{tab:sp}, which we give in the intention of checking
the specialization argument in \S \ref{ss:C} and \S \ref{ss:oth}. 
As for the other types, Table \ref{tab:sp:RY} claims that the type $D_n$ is clean,
but that the type $C_n^{\RY}$ (Ram-Yip type $C_n$) is a little confusing, 
which turns out to correspond to the type $B_n^\vee$ in the sense of \cite{M}.

%%%%%%%%%%%%%%%%%%%%%%%%%%%%%%%%%%%%%%%%%%%%%%%%%%%%%%%%%%%%%%%%%%%%%%%%%%%%%%%%%%%%%%%%%%
\subsection{Ram-Yip type formula of type $(C^\vee_n,C_n)$}

In this subsection, we recall Ram-Yip type formula of type $(C^\vee_n,C_n)$,
i.e., the Ram-Yip type formula of non-symmetric Koornwinder polynomial, 
derived in \cite{OS}. For the notation, we follow \cite{YK}.

%%%%%%%%%%%%%%%%%%%%%%%%%%%%%%%%%%%%%%%%%%%%%%%%%%%%%%%%%%%%%%%%%%%%%%%%%%%%%%%%%%%%%%%%%%
\subsubsection{Alcove walks of type $(C^\vee_n,C_n)$}

Here we recall the alcove walks of type $(C^\vee_n,C_n)$ introduced in \cite{OS}.
See \cite[\S 2.1.3]{YK} for more information and illustrated examples.

We keep the notation for the affine root system $S$ of type  $(C^\vee_n,C_n)$ 
introduced in \S \ref{ss:CvC}.
Thus, the system $S$ is realized in $F=V \oplus \bR c$, $V=\oplus_{i=1}^n \bR \ep_i$
with the $W$-orbit decomposition $S=O_1 \sqcup \dotsb \sqcup O_5$.
We write again the decomposition \eqref{eq:S} and the affine roots $a_i$ in \eqref{eq:ai}:
\begin{align}
&O_1 := \{\pm \ep_i + r c \mid 1 \le i \le n, \, r \in \bZ\}, \quad 
 O_2 := 2 O_1, \quad 
 O_3 :=   O_1+\thf c, \quad 
 O_4 := 2 O_3 = O_2 + c, \\
&O_5 := \{\pm \ep_i \pm \ep_j + r c \mid 1 \le i < j \le n, \, r \in \bZ\}. \\
&a_0 := -2\ep_1+c, \quad 
 a_j :=   \ep_j-\ep_{j+1} \quad (1 \le j \le n-1), \quad 
 a_n :=  2\ep_n, 
 \label{eq:RY:a0} \\
&\thf a_0 \in O_3, \quad a_0 \in O_4, \quad a_j \in O_5 \ (1 \le j \le n-1), \quad
 \thf a_n \in O_1, \quad a_n \in O_2.
\end{align}

For each affine root $a \in S$, we denote by $H_a:=\pr{x\in V \mid a(x)=0}$
the associated hyperplanes in $V$.
An alcove is a connected component of $V \setminus \bigcup_{a \in S}H_a$.
The distinguished alcove 
\begin{equation}\label{eq:FA}
 A := \pr{x\in V \mid a_i(x)>0 \ (i=0,1,\dotsc,n)} \subset V
\end{equation}
is called the fundamental alcove.
We have a bijection 
\begin{align}
 W \ni w \longmapsto w A \in \pi_0(V \setminus \tbcup_{a \in S}H_a).
\end{align}
The boundary of an alcove $w A$ consists of $n+1$ hyperplanes,
each of which is called an edge of $w A$.
For an edge $H$ of an alcove $w A$, there exists $i \in \{0,1,\dotsc,n\}$ 
such that $H$ separates $w A$ and $w s_i A$.
The edge $H$ has two sides facing $w A$ and $w s_i A$, 
and we call them $w A$-side and $w s_i A$-side, respectively.

We assign $+$ or $-$ to each side of an edge of an alcove $w A$ as follows.
Let $\pr{H_{\gamma_i} \mid i=0,1,\dotsc,n}$ be the hyperplanes surrounding 
the alcove $wA$. Reordering $\gamma_i$'s if necessary, we can assume that
$H_{\gamma_i}$ separates $w A$ and $w s_i A$ for each $i$. 
Then, using the notation \eqref{eq:ola}, the assignment is given by:
\begin{align}
\begin{split}\label{eq:alc-sgn}
 &\text{if $\ol{\gamma_i} \in \wt{R}_+$, 
        then assign $+$ to $w A$-side, and $-$ to $w s_i A$-side,} \\
 &\text{if $\ol{\gamma_i} \in \wt{R}_-$, 
        then assign $-$ to $w A$-side, and $+$ to $w s_i A$-side.}
\end{split}
\end{align}
In Figure \ref{fig:fa}, we give an illustration for the case $n=2$.

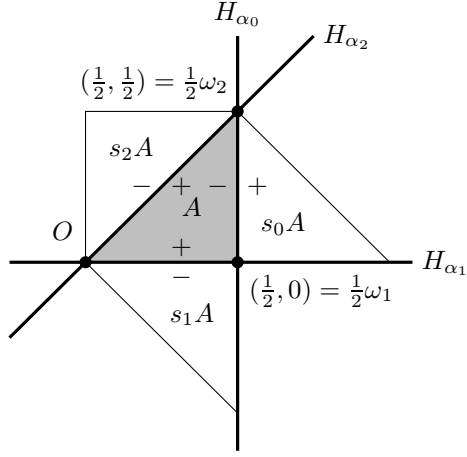
\begin{figure}[htbp]
\centering
\begin{tikzpicture}
\filldraw[fill=lightgray] (0,0)--(2,2)--(2,0)--(0,0);
 \draw (0,0)--(2,2)--(2,0)--(0,0)--(0,2)--(2,2)--(4,0)--(2,0)--(2,-2)--(0,0);
 \draw[very thick] (-1,-1.0)--(3.0,3);
 \draw[very thick] (-1, 0.0)--(4.3,0);
 \draw[very thick] ( 2,-2.5)--(2.0,3);
 \coordinate [label=center: {$H_{\alpha_0}$}]  (A) at ( 2.0, 3.3);
 \coordinate [label=right:  {$H_{\alpha_2}$}]  (B) at ( 3.0, 3.0);
 \coordinate [label=right:  {$H_{\alpha_1}$}]  (C) at ( 4.3, 0.0);
 \coordinate [label=center:            {$A$}]  (D) at ( 1.4, 0.7);
 \coordinate [label=center:        {$s_2 A$}]  (E) at ( 0.6, 1.5);
 \coordinate [label=center:        {$s_1 A$}]  (F) at ( 1.4,-0.7);
 \coordinate [label=center:        {$s_0 A$}]  (G) at ( 2.6, 0.5);
 \coordinate [label=above:             {$O$}] (O0) at (-0.3, 0.1);
 \coordinate [label=below: {$(\hf,  0)=\hf \omega_1$}] (O1) at (3.1,-0.1);
 \coordinate [label=above: {$(\hf,\hf)=\hf \omega_2$}] (O2) at (0.9, 2.1);
 \fill (0,0) circle [radius=0.08];
 \fill (2,0) circle [radius=0.08];
 \fill (2,2) circle [radius=0.08];
 \path (1, 1.0) node [right] {$+$};
 \path (1, 1.0) node [left]  {$-$};
 \path (1, 0.2) node [right] {$+$};
 \path (1,-0.2) node [right] {$-$};
 \path (2, 1.0) node [right] {$+$};
 \path (2, 1.0) node [left]  {$-$};
\end{tikzpicture}
\caption{The fundamental alcove of type $(C_2^\vee,C_2)$} \label{fig:fa}
\end{figure}

Next, we introduce alcove walks for the system $S$. 
Let $w,z \in W$ be arbitrary, and take a reduced expression 
$w=s_{i_1} s_{i_2} \dotsm s_{i_r}$ with $r:=\ell(w)$.
For a bit sequence $b=(b_1,b_2,\dotsc,b_r) \in \pr{0,1}^r$, 
consider the sequence of alcoves 
\begin{align}
 p = \bigl(p_0 := z A, \ 
           p_1 := z s_{i_1}^{b_1}A, \ 
           p_2 := z s_{i_1}^{b_1}s_{i_2}^{b_2}A, \ \dotsc, \ 
           p_r := z s_{i_1}^{b_1} \dotsm s_{i_r}^{b_r}A\bigr),
\end{align}
which is called an alcove walk with start $z$ of type $\oa{w}:=(i_1,i_2,\dotsc,i_r)$.
Note that it depends on the choice of the reduced expression of $w$,
which is indicated in the symbol $\oa{w}$.
We denote by $\Gamma(\oa{w},z)$ 
the set of all alcove walks with start $z$ of type $\oa{w}$.

\begin{eg}[Alcove walks of type $(C^\vee_2,C_2)$]\label{eg:ap}
We cite from \cite[Example 2.1.1]{YK} some illustrated examples of alcove walks
in the case $n=2$. 
Let us take $w,z \in W$ as $w = s_1 s_2 s_1 s_0$ and $z=e$.
Then we have the following two alcove walks which belong to $\Gamma(\oa{w},z)$.
\begin{align}
 p_1 := (A, A, s_2 A, s_2 s_1 A, s_2 s_1 s_0 A), \ 
 p_2 := (A, s_1 A, s_1 s_2 A, s_1 s_2 s_1 A, s_1 s_2 s_1 s_0 A).
\end{align}
We depict them in Figure \ref{fig:ap},
where the grayed alcove is the fundamental alcove $A$, 
and the number $i=0,1,2$ on each hyperplane $H$ indicates that 
$H$ belongs to the $W$-orbit of $H_{\alpha_i}$.

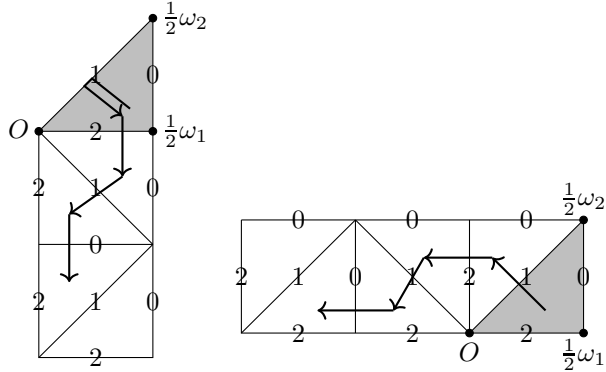
\begin{figure}[htbp]
\centering
\begin{tikzpicture}
%\draw (-2.5,0) node[] {$p_1\longleftrightarrow$};
 \filldraw[fill=lightgray] 
       (0.0, 0.0)--node[midway]{2}(1.5, 0.0)--node[midway]{0}(1.5, 1.5)
                 --node[midway]{1}(0.0, 0.0);
 \draw (0.0, 0.0)--node[midway]{2}(0.0,-1.5)--node[midway]{2}(0.0,-3.0)
                 --node[midway]{2}(1.5,-3.0)--node[midway]{0}(1.5,-1.5)
                 --node[midway]{0}(1.5, 0.0);
 \draw (0.0,-1.5)--node[midway]{0}(1.5,-1.5);
 \draw (0.0, 0.0)--node[midway]{1}(1.5,-1.5);
 \draw (1.5,-1.5)--node[midway]{1}(0,-3);
 \draw[->,thick] (1.2, 0.3)--(0.7, 0.7)--(0.6,0.6)--(1.1,0.2);
 \draw[->,thick] (1.1, 0.2)--(1.1,-0.6);
 \draw[->,thick] (1.1,-0.6)--(0.4,-1.1);
 \draw[->,thick] (0.4,-1.1)--(0.4,-2.0);
 \coordinate[label=left:            {$O$}] (A) at (0.0,0.0);
 \coordinate[label=right: {$\hf\omega_1$}] (B) at (1.5,0.0);
 \coordinate[label=right: {$\hf\omega_2$}] (C) at (1.5,1.5);
 \filldraw (A) circle [radius=0.05];
 \filldraw (B) circle [radius=0.05];
 \filldraw (C) circle [radius=0.05];
\end{tikzpicture}
\begin{tikzpicture}
%\draw (-4,0) node[] {$p_2\leftrightarrow$};
 \filldraw[fill=lightgray] 
       ( 0.0,0.0)--node[midway]{2}( 1.5,0.0)--node[midway]{0}( 1.5,1.5)
                 --node[midway]{1}( 0.0,0.0);
 \draw ( 0.0,0.0)--node[midway]{2}(-1.5,0.0)--node[midway]{2}(-3.0,0.0)
                 --node[midway]{2}(-3.0,1.5);
 \draw (-3.0,1.5)--node[midway]{0}(-1.5,1.5)--node[midway]{0}( 0.0,1.5)
                 --node[midway]{0}( 1.5,1.5);
 \draw (-1.5,1.5)--node[midway]{0}(-1.5,0.0);
 \draw ( 0.0,1.5)--node[midway]{2}( 0.0,0.0);
 \draw ( 0.0,0.0)--node[midway]{1}(-1.5,1.5);
 \draw (-1.5,1.5)--node[midway]{1}(-3.0,0.0);
 \coordinate[label=below:           {$O$}] (A) at (0.0,0.0);
 \coordinate[label=below: {$\hf\omega_1$}] (B) at (1.5,0.0);
 \coordinate[label=above: {$\hf\omega_2$}] (C) at (1.5,1.5);
 \filldraw (A) circle [radius=0.05];
 \filldraw (B) circle [radius=0.05];
 \filldraw (C) circle [radius=0.05];
 \draw[->,thick] ( 1.0,0.3)--( 0.3,1);
 \draw[->,thick] ( 0.3,1.0)--(-0.6,1);
 \draw[->,thick] (-0.6,1.0)--(-1,0.3);
 \draw[->,thick] (-1.0,0.3)--(-2,0.3);
\end{tikzpicture}
\caption{Alcove walks $p_1$ and $p_2$ of type $(C^\vee_2,C_2)$}
\label{fig:ap}
\end{figure}
\end{eg}

Let us explain how we depicted alcove walks in Figure \ref{fig:ap}.
Hereafter, for an alcove walk $p \in \Gamma(\oa{w},z)$ and $k=1,2,\dotsc,r:=\ell(w)$,
the $k$-th step of $p$ means the transition $p_{k-1} \to p_k$.
Then the $k$-th bit $b_k \in \{0,1\}$ and the $k$-th step drawing correspond 
as in Table \ref{tab:bp}, where we denote by $v_{k-1}\in W$ the element giving 
the ($k-1$)-th alcove $p_{k-1}$, i.e, we have $p_{k-1}=v_{k-1}A$.
We call the $k$-th step with $b_k=1$ a crossing, 
and the step with $b_k=0$ a folding, corresponding to the drawing.

\begin{table}[htb]
\centering
\begin{tabular}{c|c|c}
$b_k$ & $1$ & $0$ \\ \hline
& crossing & folding \\ \hline
&
\begin{tikzpicture}
 \draw (1,0.8)--(1,-1);
 \draw[thick, ->] (0,0)--(2,0);
 \coordinate [label=below: {$p_{k-1}$}] (A) at (0.0,-0.2);
 \coordinate [label=below: {$p_k$    }] (B) at (2.1,-0.2);
\end{tikzpicture}
&
\begin{tikzpicture}
 \draw (1,0.8)--(1,-1);
 \draw[thick, ->] (0,0)--(1,0)--(1,-0.1)--(0,-0.1);
 \coordinate [label=below: {$p_{k-1}=p_k$}   ] (A) at (0,-0.2);
 \coordinate [label=below: {$v_{k-1}s_{i_k}A$}] (B) at (2.0,-0.2);
\end{tikzpicture} 
\end{tabular}
\caption{The correspondence of bit and alcove walk}\label{tab:bp}
\end{table}

Take $z,w \in W$ and a reduced expression $w=s_{i_1} s_{i_2} \dotsm s_{i_r}$. 
For each $p = (z A, \dotsc, z s_{i_1}^{b_1} \dotsm s_{i_r}^{b_r}A) \in \Gamma(\oa{w},z)$,
we define $e(p) \in W$ to be the element corresponding to the endpoint of $p$, i.e., 
\begin{align}\label{eq:e(p)}
 %b(p) := z, \quad  
 e(p) := z s_{i_1}^{b_1} s_{i_2}^{b_2} \dotsm s_{i_r}^{b_r}.
\end{align}
Also, for $k=1,2,\dotsc,r$, we define $h_k(p) \in S$ to be 
the affine root associated to the hyperplane $H$ separating $p_{k-1} = v A$ and $v s_k A$.
We also divide steps of $p\in \Gamma(\oa{w},z)$ into the four classes in 
Table \ref{tab:step} according to the signs on the sides of edges in \eqref{eq:alc-sgn}.
\begin{table}[htb]
\centering
\begin{tabular}{c|c|c|c}
positive crossing & negative crossing & positive folding & negative folding \\ \hline
\begin{tikzpicture}
 \draw (1,0.8)--(1,-1);
 \draw[thick,->] (0,0)--(2,0);
 \path (1,0.6) node [left]  {$-$};
 \path (1,0.6) node [right] {$+$};
 \coordinate [label=below:  {$p_{k-1}$}] (A) at (0.0,-0.2);
 \coordinate [label=below:  {$p_k$    }] (B) at (2.0,-0.2);
\end{tikzpicture}
&
\begin{tikzpicture}
 \draw (1,0.8)--(1,-1);
 \draw[thick,->] (0,0)--(2,0);
 \path (1,0.6) node [left ] {$+$};
 \path (1,0.6) node [right] {$-$};
 \coordinate [label=below:  {$p_{k-1}$}] (A) at (0.0,-0.2);
 \coordinate [label=below:  {$p_k    $}] (B) at (2.0,-0.2);
\end{tikzpicture}
&
\begin{tikzpicture}
 \draw (1,0.8)--(1,-1);
 \draw[thick,->] (0,0)--(1,0)--(1,-0.1)--(0,-0.1);
 \path (1,0.6) node [left ] {$+$};
 \path (1,0.6) node [right] {$-$};
 \coordinate [label=below:  {$p_{k-1}=p_k$   }] (A) at (0.0,-0.2);
 \coordinate [label=below:  {$v_{k-1}s_{i_k}A$}] (B) at (2.0,-0.2);
\end{tikzpicture}
&
\begin{tikzpicture}
 \draw (1,0.8)--(1,-1);
 \draw[thick,->] (0,0)--(1,0)--(1,-0.1)--(0,-0.1);
 \path (1,0.6) node [left ] {$-$};
 \path (1,0.6) node [right] {$+$};
 \coordinate [label=below:  {$p_{k-1}=p_k$   }] (A) at (0.0,-0.2);
 \coordinate [label=below:  {$v_{k-1}s_{i_k}A$}] (B) at (2.0,-0.2);
\end{tikzpicture}
\end{tabular}
\caption{Four classes of steps of alcove walks}
\label{tab:step}
\end{table}

Along the classification in Table \ref{tab:step}, for each 
$p \in \Gamma(\oa{w},z)$, we define $\varphi_{\pm}(p) \subset \{1,\dotsc,r\}$ by
\begin{align} %\label{eq:vp(p)}
\begin{split}
 \varphi_+(p) &:= \pr{k \mid \text{the $k$-th step of $p$ is a positive folding}}, \\
 \varphi_-(p) &:= \pr{k \mid \text{the $k$-th step of $p$ is a negative folding}}. 
\end{split}
\end{align}

%%%%%%%%%%%%%%%%%%%%%%%%%%%%%%%%%%%%%%%%%%%%%%%%%%%%%%%%%%%%%%%%%%%%%%%%%%%%%%%%%%%%%%%%%%
\subsubsection{Ram-Yip type formula}

Recall that we introduced in \eqref{eq:Emu} the non-symmetric Koornwinder polynomial 
\begin{align}
 E_\mu(x) = E_\mu(x; q,t,t_0,t_n,u_0,u_n) \in \bK[x^{\pm1}].
\end{align}
for each $\mu \in P_{C_n}$.
We now explain Ram-Yip type formula of type $(C_n^\vee,C_n)$ in \cite{RY,OS},
which expresses the monomial expansion of $E_\mu(x)$ with the coefficients
given by summation over alcove walks.

For any $a=\alpha+r c\in S$, $\alpha \in \wt{R}$ 
(see \eqref{eq:wtR} and \eqref{eq:ola}), we define 
\begin{align}%\label{eq:shhgt}
 q^{ \sh(\alpha+r c)} := q^{-r}, \quad
 t^{\hgt(\alpha+r c)} := t^{\tbr{\rho_s^{C^\vee C},\alpha}}
 (t_0t_n)^{\tbr{\rho_l^{C^\vee C},\alpha}}, \quad 
 \rho^{C^\vee C}_s :=     \sum_{i=1}^n (n-i)\ep_i, \quad 
 \rho^{C^\vee C}_l := \thf\sum_{i=1}^n      \ep_i.
\end{align}
Let us given $v,w \in W$ and a reduced expression of $w$.
Then, for an alcove walk $p \in \Gamma(\oa{w},z)$, 
we define $\dir(p)$ and $\wgt(p)$ by the decomposition of the element $e(p) \in W$ 
(see \eqref{eq:e(p)}) along $W=t(P_{C_n})\rtimes W_0$ as 
\begin{align}%\label{eq:wtdr}
 e(p) = t(\wgt(p)) \dir(p), \quad \dir(p) \in W_0, \ \wgt(p) \in P_{C_n}.
\end{align}
Also, for $\mu \in P_{C_n}$, we denote the shortest element in the coset $t(\mu) W_0$ by
\begin{align}\label{eq:w(mu)}
 w(\mu) \in W.
\end{align} 
In the case $\mu=\ep_i$, $i=1,\dotsc,n$, the element $w(\ep_i)$ is given by
\begin{align}\label{eq:wmuCC}
 w(\ep_i) = s_{i-1}\cdots s_0.
\end{align}

\begin{fct}[{\cite[Theorem 3.1]{RY}, \cite[Theorem 3.13]{OS}}]\label{fct:RYOS}
For any $\mu \in P_{C_n}$, take a reduced expression $w(\mu)=s_{i_1}\cdots s_{i_r}$
of $w(\mu) \in W$. Then 
\begin{align}
&E_\mu(x)=\sum_{p\in\Gamma(\oa{w(\mu)},e)} f_p
 t_{\dir(p)}^{\hf}x^{\wgt(p)}, \quad 
 f_p  := 
 \prod_{k\in\varphi_+(p)}\psi_{i_k}^{+}(q^{\sh(-\beta_k)} t^{\hgt(-\beta_k))})
 \prod_{k\in\varphi_-(p)}\psi_{i_k}^{-}(q^{\sh(-\beta_k))} t^{\hgt(-\beta_k)}).
\end{align}
Here we set $\beta_k := s_{i_r} s_{i_{r-1}} \dotsm s_{i_{k+1}}(a_{i_r})$ 
for $k=1,\dotsc,r$, and $\psi_i(z)$ for $i=0,1,\dotsc,n$ are given by
\begin{align}\label{eq:psi}
\begin{split}
 \psi_j^{\pm}(z)
 :=\pm\frac{t^{-\hf}-t^{\hf}}{1-z^{\pm1}} \quad(1 \le j \le n-1),\\
 \psi_0^{\pm}(z)
 :=\pm\frac{(u_n^{-\hf}-u_n^{\hf})+z^{\pm\hf}(u_0^{-\hf}-u_0^{\hf})}{1-z^{\pm1}}, \quad 
 \psi_n^{\pm}(z)
  :=\pm\frac{(t_n^{-\hf}-t_n^{\hf})+z^{\pm\hf}(t_0^{-\hf}-t_0^{\hf})}{1-z^{\pm1}}.
\end{split}
\end{align}
\end{fct}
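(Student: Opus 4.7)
The plan is to derive the formula by iterating the Lusztig-type action $\beta(T_i) = \tau_i s_i + \bfb_i(1-s_i)$ from \eqref{eq:beta} applied to $1 \in \bK[x^{\pm 1}]$ along a reduced expression of $w(\mu) \in W$, and then to identify the resulting sum of $2^{\ell(w(\mu))}$ terms with the sum over $\Gamma(\oa{w(\mu)},e)$.

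First, I would invoke the standard characterization of $E_\mu(x)$ inside the polynomial representation of the affine Hecke algebra: up to an overall normalization (passing from $\beta(T_i)$ to the associated intertwiner), $E_\mu(x)$ is obtained from $1$ by applying the intertwiners attached to the generators in a reduced expression $w(\mu) = s_{i_1} \cdots s_{i_r}$. This rests on $E_\mu$ being the unique common eigenfunction of the Cherednik operators $\beta(Y^{t(\nu)})$ with eigenvalues prescribed by $\mu$, together with the fact that the intertwiners shift these eigenvalues as expected; the extra scalar factor introduced by normalization can be absorbed into the $\tau_{i_k}$ coefficient in the crossing term.

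Second, I would expand the iterated product $\beta(T_{i_1}) \beta(T_{i_2}) \cdots \beta(T_{i_r}) \cdot 1$. Each factor splits into a ``crossing'' part $\tau_{i_k} s_{i_k}$ (bit $b_k = 1$) and a ``folding'' part $\bfb_{i_k}(1 - s_{i_k})$ (bit $b_k = 0$), producing a sum indexed by $b \in \{0,1\}^r$ that is in bijection with $\Gamma(\oa{w(\mu)}, e)$: the partial product $v_{k-1} := s_{i_1}^{b_1} \cdots s_{i_{k-1}}^{b_{k-1}}$ corresponds to the alcove $p_{k-1} = v_{k-1} A$. Collecting the crossings and the translation part of $e(p)$ contributes the $t_{\dir(p)}^{1/2} x^{\wgt(p)}$ prefactor.

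Third, for each folding I would evaluate the rational function $\bfb_{i_k}$ as it is transported past the already-accumulated $s_{i_j}$'s. Using \eqref{eq:w.t}, the argument $x^{\alpha_{i_k}}$ becomes $v_{k-1}^{-1}(x^{\alpha_{i_k}})$, and a standard geometric identity for reduced expressions turns this into $q^{\sh(-\beta_k)} x^{-\ol{\beta_k}}$ for $\beta_k := s_{i_r} \cdots s_{i_{k+1}}(a_{i_r})$. Evaluating the remaining $x^{\ol{\beta_k}}$ on the character that produces $t^{\hgt(-\beta_k)}$ gives the argument of $\psi_{i_k}^{\pm}$ in \eqref{eq:psi}, while the sign $\pm$ is dictated by whether $\ol{\beta_k} \in \wt{R}_+$ or $\wt{R}_-$, matching exactly the alcove-walk sign convention \eqref{eq:alc-sgn}. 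The main obstacle is the bookkeeping here: one must verify in detail that the combinatorics of alcove walks (the labeling of boundary hyperplanes by the $\beta_k$ and the signs on edges) precisely matches the algebraic behavior of $\bfb_{i_k}$ transported through the successive $s_{i_j}$. In particular, the two-term numerators in $\psi_0^{\pm}$ and $\psi_n^{\pm}$ must be traced through the Noumi-Macdonald parameter identification \eqref{eq:MN}, and one must handle correctly the half-shift between the orbits $O_1, O_2$ and $O_3, O_4$ characteristic of type $(C_n^\vee, C_n)$.
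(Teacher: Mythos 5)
The statement you are proving is presented in the paper as a \emph{Fact} quoted verbatim from \cite[Theorem 3.1]{RY} and \cite[Theorem 3.13]{OS}; the paper itself contains no proof of it, so there is nothing internal to compare against. Your outline is, in substance, the strategy of those cited sources: realize $E_\mu$ as a normalized product of intertwiners along a reduced word for $w(\mu)$ applied to $E_0=1$, expand each factor into a crossing term and a folding term so that the $2^{\ell(w(\mu))}$ branches are indexed by bit sequences, i.e.\ by $\Gamma(\oa{w(\mu)},e)$, and track the arguments of the Lusztig functions through the partial products to land on $q^{\sh(-\beta_k)}t^{\hgt(-\beta_k)}$. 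That is the right skeleton.

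Two places in your third step are stated too loosely to count as a proof. First, the folding coefficient is not obtained by ``evaluating $x^{\ol{\beta_k}}$ on a character'': in the actual argument the rational part of the intertwiner is a function of the commuting elements $Y^{t(\ep_j)}$, and it becomes the scalar $\psi^{\pm}_{i_k}(q^{\sh(-\beta_k)}t^{\hgt(-\beta_k)})$ because it acts on an intermediate $Y$-eigenfunction whose spectral point is exactly this $q,t$-power; making this precise requires the explicit spectral vector of $E_\nu$ for type $(C_n^\vee,C_n)$ and the verification that the affine root recording the crossed hyperplane at step $k$ is $\beta_k=s_{i_r}\dotsm s_{i_{k+1}}(a_{i_r})$. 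Second, the prefactor $t_{\dir(p)}^{\hf}x^{\wgt(p)}$ is not simply ``the product of the crossing $\tau$'s times the translation part'': the overall normalization constant of the intertwiner product cancels the translation contribution of the $\tau$'s, leaving only the finite part $t_{\dir(p)}^{\hf}$, and this cancellation has to be checked against the decomposition $e(p)=t(\wgt(p))\dir(p)$. Both points are carried out in \cite{RY,OS}; since the paper deliberately imports the result rather than reproving it, your sketch is an acceptable account of where the formula comes from, but it is not self-contained at these two junctures.
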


%%%%%%%%%%%%%%%%%%%%%%%%%%%%%%%%%%%%%%%%%%%%%%%%%%%%%%%%%%%%%%%%%%%%%%%%%%%%%%%%%%%%%%%%%%
\subsection{Ram-Yip type $C_n$}\label{ss:RY:C}

In this subsection, we show that the Ram-Yip formula of the non-symmetric Macdonald 
polynomial of type $C_n$ in the sense of \cite{RY} can be obtained from 
the Ram-Yip type formula of type $(C_n^\vee,C_n)$ (Fact \ref{fct:RYOS}) 
by the corresponding specialization in Table \ref{tab:sp:RY}:
\begin{align}
 t_0=u_0=u_n=1.
\end{align}
See Proposition \ref{prp:RY:C} for the precise statement.

A caution on the notation is in order.
In \cite{RY}, the Ram-Yip formula for what they call type $C_n$ is derived
using the affine root system of type $C_n^\vee$ in the sense of loc.\ cit. 
As mentioned before, it turns out that both the polynomial and the root system are 
different from those in \cite{M}. For distinction, we denote by $E^{C,\RY}_\mu(x)$ and 
$S^{C^\vee,\RY}$ the polynomial and the system treated in \cite{RY}, and call them
\emph{the Macdonald polynomial of Ram-Yip type $C_n$} and 
\emph{the affine root system of Ram-Yip type $C_n^\vee$}, respectively.

%%%%%%%%%%%%%%%%%%%%%%%%%%%%%%%%%%%%%%%%%%%%%%%%%%%%%%%%%%%%%%%%%%%%%%%%%%%%%%%%%%%%%%%%%%
\subsubsection{Affine root system of Ram-Yip type $C_n^\vee$}

We start with the explanation on the system $S^{C^\vee,\RY}$.
Let $S$ be the affine root system of type $(C_n^\vee,C_n)$ in \eqref{eq:S}.
The affine root system $S^{C^\vee,\RY}$ of Ram-Yip type $C_n^\vee$ 
is the subset of $S$ given by 
\begin{align}\label{eq:CvRY}
\begin{split}
 S^{C^\vee,\RY} := &O_1 \sqcup O_5 \\
 = &\{\pm \ep_i + r c \mid 1 \le i \le n, \, r \in \bZ\} \sqcup 
    \{\pm \ep_i \pm \ep_j + r c \mid 1 \le i < j \le n, \, r \in \bZ\},
\end{split}
\end{align}
where we used the $W$-orbits in \eqref{eq:S}.
%Then, comparing it with $S^B$ in \eqref{eq:S^B}, we have
%\begin{align}
% S(C^{\vee,\RY}) = S^B.
%\end{align}
The basis of $S^{C^\vee,\RY}$ in \cite{RY} is given by 
\begin{align}
 a_0^{C^\vee,\RY} := -(\ep_1+\ep_2)+c, \quad 
 a_j^{C^\vee,\RY} := a_j = \ep_j-\ep_{j+1} \quad (j=1,\dotsc,n-1), \quad 
 a_n^{C^\vee,\RY} := \ep_n.
\end{align}
Note that we have $a_j^{C^\vee,\RY}=a_j$ in \eqref{eq:ai}, 
but the other two roots are different from those in \eqref{eq:ai}.

Next, we turn to the extended affine Weyl group.
The refections associated to the above basis are denoted by 
\begin{align}\label{eq:Cn:sr}
 s_0^{C^\vee} := s_{a_0^{C^\vee,\RY}}, \quad 
 s_i = s_{a_i^{C^\vee,\RY}} \quad (i=1,\dotsc,n),
\end{align}
where we used $s_i \in W_0$ in \eqref{eq:W0}. 
Note that we have the common $s_n$ although $a_n^{C^\vee,\RY} \neq a_n$.
We also consider the automorphism group $\Omega^{C^\vee,\RY}$ 
of the extended Dynkin diagram of type $B_n$:
\begin{align}
 \dynkin[Kac, extended,label, label macro/.code={},labels={0,1,2,3,\ell-2,\ell-1,\ell},
         labels*={0,1,2,3,,n-1,n}]{B}{}
\end{align}
Explicitly, using the weight lattice 
$P_{B_n}=\oplus_{i=1}^n \bZ \ep_i \oplus \bZ \thf(\ep_1+\cdots+\ep_n)$ 
of type $B_n$ in \eqref{eq:PB}, we have
\begin{align}
 \Omega^{C^\vee,\RY}
 := P^\vee_{C_n}/Q^\vee_{C_n} = P_{B_n}/Q_{B_n}
  = \bbr{\pi^{C^\vee} \mid \bigl(\pi^{C^\vee}\bigr)^2=e}.
\end{align}
The generator $\pi^{C^\vee}$ flips the diagram by transposing the vertices $0 \lrt 1$.
Then, the extended affine Weyl group $W^{C^\vee,\RY}$ is defined to be the subgroup 
of $\GL_{\bR}(V)$ generated by the reflections in \eqref{eq:Cn:sr} and $\pi^{C^\vee}$. 
In other words, we have 
\begin{align}
 W^{C^\vee,\RY} := \bbr{s_0^{C^\vee},s_1,\dotsc,s_n,\pi^{C^\vee}}.
\end{align}
As an abstract group, $W^{C^\vee,\RY}$ is presented by these generators 
with the following relations.
\begin{align}\label{eq:WCrel} 
&\pi^{C^\vee}s_0^{C^\vee} = s_1\pi^{C^\vee}, & 
&{s_i}^{2} = (s_0^{C^\vee})^2=(\pi^{C^\vee})^2=e \quad (1 \le i \le n), \\
&s_0^{C^\vee} s_1 = s_1 s_0^{C^\vee}, & 
&s_i s_j = s_j s_i  \quad (\abs{i-j}>1, (i,j) \notin \pr{(0,2),(2,0)}),\\
&s_0^{C^\vee} s_2 s_0^{C^\vee}=s_2 s_0^{C^\vee} s_2, & 
&s_i s_{i+1} s_i = s_{i+1} s_i s_{i+1} \quad (1 \le i \le n-2),  \\
&s_n s_{n-1} s_n s_{n-1} = s_{n-1} s_n s_{n-1} s_n.
\end{align}
In the second line, we abusively denoted $s_0:=s_0^{C^\vee}$.
Let us write down the action of $W^{C^\vee,\RY}$ on 
$F_\bZ=P_{C_n} \oplus \thf \bZ$ in \eqref{eq:FbZ}.
\begin{align}
 s_0^{C^\vee}(\ep_i) &=
 \begin{cases}c-\ep_2 & (i = 1) \\ c-\ep_1 & (i = 2) \\ \ep_i & (i \neq 1,2)\end{cases}, & 
 s_j(\ep_i) &=
 \begin{cases}\ep_j & (i = j+1) \\ \ep_j+1 & (i = j) \\ \ep_i & (i \neq j, j+1)\end{cases}
 & (1 \le j \le n-1), \\
 s_n(\ep_i) &= 
 \begin{cases} -\ep_n & (i = n) \\ \ep_i & (i \neq n) \end{cases}, & 
 \pi^{C^\vee}(\ep_i) &= 
 \begin{cases} c-\ep_1 & (i=1) \\ \ep_i & (i \neq 1) \end{cases}. 
\end{align}
We can see from this action that $W^{C^\vee,\RY}$ preserves $S^{C^\vee,\RY} \subset S$, 
and the description $S^{C^\vee,\RY} = O_1 \sqcup O_5$ in \eqref{eq:CvRY}
is actually the decomposition into $W^{C^\vee,\RY}$-orbits.

In fact, as the following lemma shows,
the group $W^{C^\vee,\RY}$ is identical to $W$ in \eqref{eq:W}.

\begin{lem}\label{lem:WChom}
The following gives a group isomorphism $\vp^C\colon W \sto W^{C^\vee,\RY}$.
\begin{align}
 \vp^C(s_i) := s_i \quad (1 \le i \le n), \quad  \vp^C(s_0) := \pi^{C^\vee}.
\end{align}
In particular, we have the following relations of subgroups 
in $\GL_\bR(F_\bZ)$, $F_\bZ=V \oplus \bR c$.
\begin{align}
 W = W^{C^\vee,\RY} = t(P_{C_n}) \ltimes W_0.
\end{align}
\end{lem}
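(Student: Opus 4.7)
The plan is to prove both groups are the \emph{same} subgroup of $\GL_\bR(F)$ and that $\vp^C$ is the identity map under this identification. The key observation, which I would verify first, is that $s_0 \in W$ and $\pi^{C^\vee} \in W^{C^\vee,\RY}$ act identically on $F = V \oplus \bR c$. Using $s_0 = t(\ep_1) s_{2\ep_1}$ from \eqref{eq:s0}, a direct computation gives $s_0(\ep_1) = c - \ep_1$, $s_0(\ep_i) = \ep_i$ for $i \neq 1$, and $s_0(c) = c$, which matches the formulas for $\pi^{C^\vee}$ listed in the excerpt. Hence $s_0 = \pi^{C^\vee}$ as elements of $\GL_\bR(F)$.

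Next I would show the two sets of generators generate the same subgroup of $\GL_\bR(F)$. By \eqref{eq:W:gr}, $W = \br{s_0, s_1, \dotsc, s_n}$; all of these lie in $W^{C^\vee,\RY}$, since $s_1, \dotsc, s_n$ are among its generators and $s_0 = \pi^{C^\vee}$ by the previous step. For the reverse inclusion, I need $s_0^{C^\vee} \in W$. The relation $\pi^{C^\vee} s_0^{C^\vee} = s_1 \pi^{C^\vee}$ in \eqref{eq:WCrel}, combined with $(\pi^{C^\vee})^2 = e$, yields $s_0^{C^\vee} = \pi^{C^\vee} s_1 \pi^{C^\vee} = s_0 s_1 s_0 \in W$. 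Thus $W = W^{C^\vee,\RY}$ as subgroups of $\GL_\bR(F)$.

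With the two subgroups identified, the map $\vp^C$ defined on generators by $\vp^C(s_i) = s_i$ ($1 \le i \le n$) and $\vp^C(s_0) = \pi^{C^\vee}$ is simply the identity on the underlying subgroup of $\GL_\bR(F)$, since $s_0 = \pi^{C^\vee}$. In particular, it is automatically a well-defined group isomorphism (no need to verify relations by hand). Finally, the decomposition $W = t(P_{C_n}) \ltimes W_0$ is exactly the definition \eqref{eq:W}, transferred through this identification to $W^{C^\vee,\RY}$.

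There is no real obstacle here — the verification reduces to a one-line calculation showing $s_0 = \pi^{C^\vee}$ and the manipulation $s_0^{C^\vee} = s_0 s_1 s_0$. The only point requiring a bit of care is making sure the identification of the two groups inside $\GL_\bR(F)$ is used, rather than trying to verify the eight defining relations of $W$ in \eqref{eq:Wrel} against the presentation \eqref{eq:WCrel} of $W^{C^\vee,\RY}$ by abstract manipulation; the latter approach works but is unnecessarily long.
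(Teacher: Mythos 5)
Your proof is correct, but it takes a genuinely different route from the paper's. The paper regards $W$ as the abstractly presented group $\br{s_0,s_1,\dotsc,s_n}$ with relations \eqref{eq:Wrel} and proves the lemma by checking that $\vp^C$ sends each defining relation of $W$ to a relation in the presentation \eqref{eq:WCrel} of $W^{C^\vee,\RY}$ (well-definedness), together with the computation $\vp^C(s_0 s_1 s_0)=\pi^{C^\vee} s_1 \pi^{C^\vee}=s_0^{C^\vee}$ (surjectivity); the only relation checked in detail is $s_0 s_1 s_0 s_1 = s_1 s_0 s_1 s_0 \mapsto s_0^{C^\vee} s_1 = s_1 s_0^{C^\vee}$, which is exactly the manipulation you also use. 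You instead work entirely inside $\GL_\bR(F)$: the explicit actions recorded in the paper show $s_0 = t(\ep_1)s_{2\ep_1}$ and $\pi^{C^\vee}$ agree on every $\ep_i$ and on $c$, hence coincide as linear operators, so the two generating sets generate the same subgroup and $\vp^C$ is literally the identity map. Your approach buys injectivity for free and avoids checking the remaining relations one by one (a point the paper's proof leaves somewhat implicit, since verifying that relations are preserved only gives a well-defined surjection, not yet an isomorphism, unless one also invokes the concrete realization or runs the argument in both directions). The paper's approach has the advantage of being purely presentation-theoretic and of making the surjectivity computation $s_0^{C^\vee}=s_0s_1s_0$ the visible pivot of the argument; yours makes the same identity an immediate consequence of the relation $\pi^{C^\vee}s_0^{C^\vee}=s_1\pi^{C^\vee}$. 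Both are complete; yours is shorter and, as you note, sidesteps the case-by-case relation check.
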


\begin{proof}
We regard $W$ as the group with the presentation 
$\br{s_0,s_1,\dotsc,s_n}$ in \eqref{eq:W:gr}.
Since $\vp^C(s_0 s_1 s_0) = \pi^{C^\vee} s_1 \pi^{C^\vee}=s^{C^\vee}_0$, 
we have the surjectivity of the homomorphism $\varphi^\vee$ up to well-definedness.
Thus, it is enough to show that the defining relations \eqref{eq:Wrel} of $W$ 
are mapped by $\vp^C$ to those \eqref{eq:WCrel} of $W^{C^\vee,\RY}$.
The non-trivial parts are those containing $s_0 \in W$. 
As for the fourth relation $s_0s_1s_0s_1=s_1s_0s_1s_0$ in \eqref{eq:Wrel},
the application of $\vp^C$ yields 
\begin{align}
            \vp^C(s_0 s_1 s_0 s_1) = \vp^C(s_1 s_0 s_1 s_0) \iff
 \pi^{C^\vee} s_1 \pi^{C^\vee} s_1 = s_1 \pi^{C^\vee} s_1 \pi^{C^\vee} \iff
                  s_0^{C^\vee} s_1 = s_1 s_0^{C^\vee},
\end{align}
which is in the third line of \eqref{eq:WCrel}.
The other relations are similarly checked.
\end{proof}

For later use, we write down the reduced expression of 
$t(\ep_i) \in W^{C^\vee,\RY}$ for $i=1,2,\dotsc,n$.
\begin{align}\label{eq:tepC}
\begin{split}
&t(\ep_1)=\pi^{C^\vee} s_1 \dotsm s_n s_{n-1} \dotsm s_1 \\
&t(\ep_2)=\pi^{C^\vee} s_0^{C^\vee} s_1 \dotsm s_n s_{n-1} \dotsm s_2 \\
&t(\ep_i)=\pi^{C^\vee} s_{i-1} \dotsm s_2s_0^{C^\vee}s_1 \dotsm s_n s_{n-1} \dotsm s_i
 \quad  (3 \le i \le n).
\end{split}
\end{align}

%%%%%%%%%%%%%%%%%%%%%%%%%%%%%%%%%%%%%%%%%%%%%%%%%%%%%%%%%%%%%%%%%%%%%%%%%%%%%%%%%%%%%%%%%%
\subsubsection{Ram-Yip formula of non-symmetric Macdonald polynomials %
 of Ram-Yip type $C_n$}

Recalling the $W^{C^\vee,\RY}$-orbit decomposition $S^{C^\vee,\RY}=O_1 \sqcup O_5$
in \eqref{eq:CvRY}, we take parameters in the correspondence 
\begin{align}
 t_s^{\RY} \lrto O_1, \quad t_m^{\RY} \lrto O_5.
\end{align}
For each $\mu \in P_{C_n}$, 
we have the non-symmetric Macdonald polynomial of Ram-Yip type $C_n$, 
%in the sense of \cite{RY}, 
which is then denoted by 
\begin{align}
 E_\mu^{C,\RY}(x) = E_\mu^{C,\RY}(x;q,t_s^{\RY},t_m^{\RY}) 
 \in \bK_{C,\RY}[x^{\pm1}], \quad 
 \bK_{C,\RY} := \bQ\bigl(q^{\hf}, (t_s^{\RY})^{\hf},(t_m^{\RY})^{\hf} \bigr).
\end{align}
Below we explain the explicit formula of $E_\mu^{C,\RY}(x)$ given in \cite{RY}.

For each $a=\alpha+r c \in S^{C^\vee,\RY} \subset P_{C_n} \oplus \bR c$, 
we define $q^{\sh^C(a)}$ and $t^{\hgt^C(a)}$ by 
\begin{align}\label{eq:shhgtC}
 q^{ \sh^C(\alpha+rc)} := q^{-r}, \quad
 t^{\hgt^C(\alpha+rc)} := (t_s^{\RY})^{\tbr{\rho^C_s,\alpha}}
                          (t_m^{\RY})^{\tbr{\rho^C_m,\alpha}}, \quad 
 \rho^C_s := \sum_{i=1}^n\ep_i, \quad 
 \rho^C_m=\sum_{i=1}^n(n-i)\ep_i.
\end{align}

We also denote the fundamental alcove of $S^{C^\vee,\RY}$ by 
\begin{align} 
 A^{C^\vee,\RY} := \pr{x\in V \mid a_i^{C^\vee,\RY}(x) \ge 0, \  i=0,1,\dotsc,n}.
\end{align}
Then we have $A^{C^\vee,\RY} = A \cup s_0 A $, where $A$ is the fundamental alcove
\eqref{eq:FA} and $s_0$ is the $0$-th reflection associated to 
$a_0 = -2\ep_1+c \in S$ \eqref{eq:RY:a0}, both of type $(C^\vee_n,C_n)$.
Note that $a_0 \neq a_0^{C^\vee,\RY}$, so that the corresponding hyperplanes 
and reflections are different. See Figure \ref{fig:faC} for the case $n=2$.

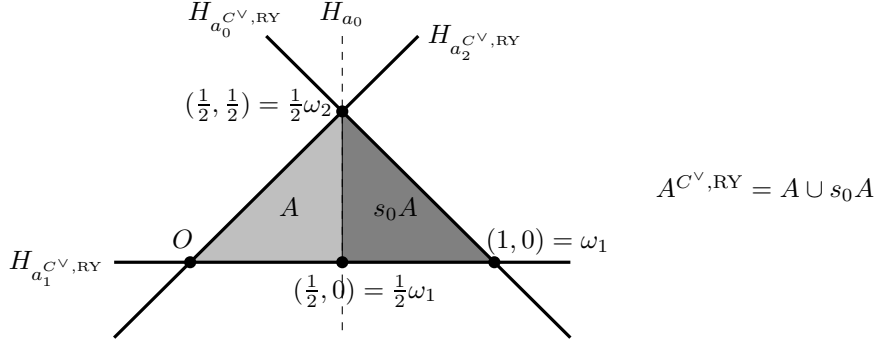
\begin{figure}[htbp]
\centering
\begin{tikzpicture}
\filldraw[fill=gray] (0,0)--(2,2)--(4,0);
\filldraw[fill=lightgray] (0,0)--(2,2)--(2,0); %pattern=north east lines
 \draw (0,0)--(2,2)--(4,0);
 \draw[very thick] (-1,-1.0)--(3.0,3);
 \draw[very thick] (-1, 0.0)--(5.0,0);
 \draw[very thick] (1,3)--(5.0,-1);
 \draw[dashed] (2,3)--(2,-1);
 \coordinate [label=center: {$H_{a_0^{C^\vee,\RY}}$}]  (A) at ( 0.6, 3.3);
 \coordinate [label=right:  {$H_{a_2^{C^\vee,\RY}}$}]  (B) at ( 3.0, 3.0);
 \coordinate [label=left:   {$H_{a_1^{C^\vee,\RY}}$}]  (C) at (-1.0, 0.0);
 \coordinate [label=center: {$H_{a_0}$}]  (C) at ( 2, 3.3);
%\coordinate [label=center: {$A_C$}]  (D) at ( 2, 0.7);
 \coordinate [label=center: {$A$}]    (D) at ( 1.3, 0.7);
 \coordinate [label=center: {$s_0A$}] (E) at ( 2.7, 0.7);
 \coordinate [label=above:  {$O$}]    (O) at (-0.1, 0.0);
 \coordinate [label=below:  {$(\hf,  0)=\hf \omega_1$}] (O1) at (2.3,-0.1);
 \coordinate [label=above:  {$(\hf,\hf)=\hf \omega_2$}] (O2) at (0.9, 1.8);
 \coordinate [label=above:  {$(1,0)=\omega_1$}] (O3) at (4.7, 0.0);
 \fill (0,0) circle [radius=0.08];
 \fill (2,0) circle [radius=0.08];
 \fill (2,2) circle [radius=0.08];
 \fill (4,0) circle [radius=0.08];
 \node (M) at (6,1) [right]{$A^{C^\vee,\RY} = A \cup s_0 A$};
\end{tikzpicture}
\caption{The fundamental alcove $A^{C^\vee,\RY}$ of Ram-Yip type $C_2$} \label{fig:faC}
\end{figure}

Finally, for each $\mu \in P_{C_n}$, we denote the shortest element 
in the coset $t(\mu) W_0$ by
\begin{align}\label{eq:wC(mu)}
 w_C(\mu) \in W^{C^\vee,\RY}. 
\end{align}
Finally, we denoted by $\Gamma_C(\oa{w},z)$ 
the set of all alcove walks with start $z \in W^{C^\vee,\RY}$ of type $\oa{w}$.

\begin{fct}[{\cite[Theorem 3.1]{RY}}]\label{fct:CRY}
Let $\mu \in P_{C_n}$ be arbitrary, and take a reduced expression
$w_C(\mu) = (\pi^{C^\vee})^k s_{i_1} \dotsm s_{i_r}$ with $k \in \pr{0,1}$,
using the abbreviated symbols in \eqref{eq:WCrel}. Then, we have
\begin{align}
&E^{C,\RY}_\mu(x)=\sum_{p\in\Gamma_C(\oa{w_(\mu)},e)} f^C_p
 t_{\dir(p)}^{\hf}x^{\wgt(p)}, \\
&f^C_p  := 
 \prod_{k\in\varphi_+(p)}(\psi^C_{i_k})^{+}(q^{\sh^C(-\beta_k)} t^{\hgt^C(-\beta_k)})
 \prod_{k\in\varphi_-(p)}(\psi^C_{i_k})^{-}(q^{\sh^C(-\beta_k)} t^{\hgt^C(-\beta_k)}),
\end{align}
where $\beta_k := s_{i_r} s_{i_{r-1}} \dotsm s_{i_{k+1}}(a_{i_r}^{C^\vee,\RY})$ 
for $k=1,2,\dotsc,r$, and $(\psi^C_i)^{\pm}(z)$ for $i=0,1,\dotsc,n$ is given by 
\begin{align}\label{eq:CRY:psi}
 (\psi^C_i)^{\pm}(z) := \pm\frac{(t_m^{\RY})^{-\hf}-(t_m^{\RY})^{\hf}}{1-z^{\pm1}}
  \quad (0 \le i \le n-1), \quad
 (\psi^C_n)^{\pm}(z) := \pm\frac{(t_s^{\RY})^{-\hf}-(t_s^{\RY})^{\hf}}{1-z^{\pm1}}.
\end{align}
\end{fct}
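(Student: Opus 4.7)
My plan is to derive this formula by the standard intertwiner expansion in the polynomial (basic) representation of the affine Hecke algebra $H^{C,\RY}$ attached to $W^{C^\vee,\RY}$, mirroring the strategy of \cite{RY} for reduced affine root systems and its adaptation to the Koornwinder setting in \cite{OS}. First I would realize $H^{C,\RY}$ faithfully on $\bK_{C,\RY}[x^{\pm 1}]$ via Lusztig operators $T_0,\dotsc,T_n$ together with a length-zero generator $T_{\pi^{C^\vee}}$, with parameters attached to the two $W^{C^\vee,\RY}$-orbits of $S^{C^\vee,\RY}$ as $t_s^{\RY} \lrt O_1$, $t_m^{\RY} \lrt O_5$. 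The commuting Cherednik operators $Y^\lambda$ are built by the analogue of \eqref{eq:Yred} using the reduced expressions \eqref{eq:tepC}, and $E^{C,\RY}_\mu$ is uniquely characterized as the simultaneous $Y$-eigenvector satisfying the triangularity condition with respect to a suitable ordering on $P_{C_n}$; the eigenvalue can be read off as $q^{\sh^C(\cdots)} t^{\hgt^C(\cdots)}$ via \eqref{eq:shhgtC}.

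Second, I would introduce the intertwiners $\tau_i$ ($0 \le i \le n$) satisfying the braiding $\tau_i Y^\lambda = Y^{s_i \lambda} \tau_i$, together with the pure-automorphism relation $T_{\pi^{C^\vee}} Y^\lambda = Y^{\pi^{C^\vee} \lambda} T_{\pi^{C^\vee}}$; each $\tau_i$ has a decomposition of the form $\tau_i = T_i + (\psi^C_i)^+(Y^{\alpha_i^{C^\vee,\RY}})$ with the rational function $(\psi^C_i)^+$ as in \eqref{eq:CRY:psi}. Starting from $E^{C,\RY}_0 = 1$ and applying the given reduced expression $w_C(\mu) = (\pi^{C^\vee})^k s_{i_1} \dotsm s_{i_r}$ yields $E^{C,\RY}_\mu$ up to an overall scalar which is absorbed into the $t_{\dir(p)}^{\hf}$ prefactor. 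Distributing the product $\tau_{i_1} \dotsm \tau_{i_r} \cdot 1$ into $2^r$ summands indexed by bit sequences $b \in \pr{0,1}^r$ puts each term in canonical bijection with an alcove walk $p \in \Gamma_C(\oa{w_C(\mu)},e)$, where bit $1$ corresponds to a crossing (use $T_{i_k}$) and bit $0$ to a folding (use $\psi^C_{i_k}$). Propagating the $Y$-operator past the subsequent reflections produces exactly the affine root $-\beta_k = -s_{i_r} \dotsm s_{i_{k+1}}(a_{i_k}^{C^\vee,\RY})$, whose evaluation under \eqref{eq:shhgtC} is $q^{\sh^C(-\beta_k)} t^{\hgt^C(-\beta_k)}$; the sign $\pm$ in $(\psi^C_{i_k})^\pm$ is dictated by whether $-\ol{\beta_k} \in \wt{R}_\pm$, which by \eqref{eq:alc-sgn} is exactly the positive/negative classification of foldings encoded by $\varphi_\pm(p)$. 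The surviving crossings multiply to $T_{e(p)}$, and acting on $1$ this gives $t_{\dir(p)}^{\hf} x^{\wgt(p)}$ via the semidirect product decomposition $W^{C^\vee,\RY} = t(P_{C_n}) \rtimes W_0$ of Lemma~\ref{lem:WChom}.

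The main obstacle will be the careful handling of the length-zero prefix $(\pi^{C^\vee})^k$: it contributes neither a crossing nor a folding and merely shifts the base alcove through the diagram automorphism, so one must verify that $\Gamma_C(\oa{w_C(\mu)},e)$ correctly absorbs this shift and that the conjugation relation $\pi^{C^\vee} T_i (\pi^{C^\vee})^{-1} = T_{\sigma(i)}$ (with $\sigma$ the transposition $0 \lrt 1$) is compatible with the subsequent combinatorial identifications. A secondary technical check is that the two-parameter form of $(\psi^C_i)^\pm$ in \eqref{eq:CRY:psi} correctly specializes the generic Lusztig coefficients attached to the five $W$-orbits of $S$ to only the two $W^{C^\vee,\RY}$-orbits $O_1$, $O_5$ of $S^{C^\vee,\RY}$; this is precisely the prediction of the Ram-Yip type $C_n$ row of Table~\ref{tab:sp:RY}, and indeed the specialization $t_0=u_0=u_n=1$ applied to \eqref{eq:psi} collapses $\psi_i^\pm$ to \eqref{eq:CRY:psi}, with the boundary cases $i=0,n$ behaving consistently because $a_0^{C^\vee,\RY} \in O_5$ while $a_n^{C^\vee,\RY} \in O_1$ under the realization \eqref{eq:CvRY}.
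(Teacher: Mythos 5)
This statement is quoted in the paper as a Fact with the citation [RY, Theorem 3.1]; the paper supplies no internal proof, so there is nothing to compare line by line. Your proposal essentially reconstructs the argument that the citation delegates to Ram--Yip: the intertwiner expansion in the polynomial representation of the affine Hecke algebra attached to $W^{C^\vee,\RY}$, with Cherednik operators built from the reduced expressions \eqref{eq:tepC}, intertwiners $\tau_i = T_i + (\psi^C_i)^+(Y^{a_i^{C^\vee,\RY}})$, the $2^r$-fold expansion over bit sequences matched to alcove walks, the identification of the surviving root $-\beta_k$ and of the folding signs with the assignment \eqref{eq:alc-sgn}, and the length-zero element $\pi^{C^\vee}$ handled through the diagram automorphism. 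This is the correct architecture, and you even silently repair the paper's apparent typo by writing $a_{i_k}^{C^\vee,\RY}$ rather than $a_{i_r}^{C^\vee,\RY}$ inside $\beta_k$.

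Two caveats. First, the phrase ``an overall scalar which is absorbed into the $t_{\dir(p)}^{\hf}$ prefactor'' is where the real bookkeeping lives: one must verify that the unique all-crossings walk contributes exactly $t_{\dir(p)}^{\hf}x^{\mu}$ with coefficient $1$, which is what fixes the normalization of the intertwiners and hence the whole formula; as written this is asserted rather than checked. Second, your closing ``secondary technical check'' --- obtaining \eqref{eq:CRY:psi} by setting $t_0=u_0=u_n=1$ in the Koornwinder functions \eqref{eq:psi} --- must remain a consistency check only. In the paper that specialization is the content of Lemma \ref{lem:SpCRY} and Proposition \ref{prp:RY:C}, which are deduced \emph{from} Fact \ref{fct:CRY}; using it to justify \eqref{eq:CRY:psi} inside a proof of the Fact would be circular. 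The intrinsic source of $(\psi^C_i)^{\pm}$ is the attachment of parameters to the two $W^{C^\vee,\RY}$-orbits, $t_s^{\RY} \lrt O_1$ and $t_m^{\RY} \lrt O_5$, together with the observation that $a_i^{C^\vee,\RY} \in O_5$ for $0 \le i \le n-1$ while $a_n^{C^\vee,\RY} = \ep_n \in O_1$, exactly as you note at the end.
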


%%%%%%%%%%%%%%%%%%%%%%%%%%%%%%%%%%%%%%%%%%%%%%%%%%%%%%%%%%%%%%%%%%%%%%%%%%%%%%%%%%%%%%%%%%
\subsubsection{Specialization to type $C_n$}

In this part, we check that the specialization $t_0=u_0=u_n=1$ of the Ram-Yip type formula
for the non-symmetric Koornwinder polynomial  $E_\mu(x)$ (Fact \ref{fct:RYOS}) is equal to
the Ram-Yip formula for the non-symmetric Macdonald polynomial $E_\mu^{C,\RY}(x)$ 
of Ram-Yip type $C_n$ (Fact \ref{fct:CRY}). 
Using \eqref{eq:Emu}, we denote the specialized non-symmetric Koornwinder polynomial by
\begin{align}\label{eq:EspC}
 E_\mu^{\tsp,C}(x) = E_\mu^{\tsp,C}(x;q,t,t_n) := E_\mu(x;q,t,1,t_n,1,1).
\end{align}

We denote by 
\begin{align}
 \Gamma_0(\oa{w(\mu)},e) \subset \Gamma(\oa{w(\mu)},e)
\end{align}
the subset consisting of alcove walks without folding by $s_0$.
We first show that under the specialization $t_0=u_0=u_n=1$, 
the summation over $\Gamma(\oa{w(\mu)},e)$ in Fact \ref{fct:RYOS} 
reduces to that over $\Gamma_0(\oa{w(\mu)},e)$.

\begin{lem}\label{lem:SpCRY}
Let $\mu \in P_{C_n}$ be arbitrary, and 
take a reduced expression $w(\mu)=s_{i_1} \dotsm s_{i_r}$ for the element 
$w(\mu) \in W$ given in \eqref{eq:w(mu)}. Then 
\begin{align}
&E^{\tsp,C}_\mu(x)=\sum_{p \in \Gamma_0(\oa{w_(\mu)},e)} f_p
 t_{\dir(p)}^{\hf}x^{\wgt(p)}, \\
&f_p  := 
 \prod_{k\in\varphi_+(p)}(\psi^{\tsp,C}_{i_k})^{+}(q^{\sh(-\beta_k)}  t^{\hgt(-\beta_k)})
 \prod_{k\in\varphi_-(p)}(\psi^{\tsp,C}_{i_k})^{-}(q^{\sh(-\beta_k))} t^{\hgt(-\beta_k)}),
\end{align}
where we used 
\begin{align}\label{eq:CRY:psi^sp}
 (\psi^{\tsp,C}_i)^{\pm}(z) := \pm \frac{t^{-\hf}-t^{\hf}}{1-z^{\pm1}} 
 \quad (i=1,\dotsc,n-1), \quad 
 (\psi^{\tsp,C}_n)^{\pm}(z) := \pm \frac{t_n^{-\hf}-t_n^{\hf}}{1-z^{\pm1}}.
\end{align}
\end{lem}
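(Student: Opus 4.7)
The plan is to derive the lemma by a direct substitution into the Ram-Yip type formula in Fact \ref{fct:RYOS}, and then to observe that the specialization kills the contributions of all alcove walks that fold at an $s_0$-step. The main work is bookkeeping about the functions $\psi_i^{\pm}(z)$ in \eqref{eq:psi}, and everything else follows because $f_p$ depends on the parameters only through these functions.

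First, I would examine what happens to each $\psi_i^{\pm}(z)$ under $t_0=u_0=u_n=1$. For $1\le j\le n-1$, the function $\psi_j^{\pm}(z)$ does not involve $t_0,u_0,u_n$, so it is unchanged and coincides with $(\psi_i^{\tsp,C})^{\pm}(z)$ in \eqref{eq:CRY:psi^sp}. For $j=n$, the factor $t_0^{-\hf}-t_0^{\hf}$ in the numerator vanishes, which leaves exactly $(\psi_n^{\tsp,C})^{\pm}(z)=\pm(t_n^{-\hf}-t_n^{\hf})/(1-z^{\pm 1})$ as required. For $j=0$, both $u_n^{-\hf}-u_n^{\hf}$ and $u_0^{-\hf}-u_0^{\hf}$ vanish, and hence $\psi_0^{\pm}(z)\equiv 0$ identically under the specialization.

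Second, I would plug these specialized $\psi$'s back into the formula in Fact \ref{fct:RYOS}. Recall that the weight $f_p$ of an alcove walk $p$ is a product over its folded steps, each folded $k$-th step contributing $\psi_{i_k}^{\pm}$ evaluated at $q^{\sh(-\beta_k)}t^{\hgt(-\beta_k)}$. Therefore, if any folded step of $p$ is an $s_0$-step, i.e.\ satisfies $i_k=0$, the contribution $f_p$ vanishes under the specialization because of the identical vanishing of $\psi_0^{\pm}$. Consequently, only those $p\in\Gamma(\oa{w(\mu)},e)$ without any $s_0$-folding survive; by the very definition of $\Gamma_0(\oa{w(\mu)},e)$, this is precisely the desired index set.

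Finally, on $\Gamma_0(\oa{w(\mu)},e)$ the remaining $\psi_{i_k}^{\pm}$ (with $i_k\in\{1,\dotsc,n\}$) reduce to $(\psi^{\tsp,C}_{i_k})^{\pm}$ by the observations above, so the specialized formula takes the form in the statement. Because I expect no subtle cancellation or $0/0$ issue (the denominators $1-z^{\pm 1}$ depend only on $q$ and $t$ via the $\hgt$ and $\sh$ exponents and remain generic in $\bK_{C,\RY}$), the main obstacle is purely notational: to match the evaluation points and the branch choices ($+$ versus $-$) of the $\psi$'s on the two sides, which is an unambiguous comparison once one tracks the definitions in \eqref{eq:psi} and \eqref{eq:CRY:psi^sp}.
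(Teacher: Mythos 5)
Your proof is correct and takes essentially the same route as the paper's: both arguments observe that $u_0=u_n=1$ forces $\psi_0^{\pm}(z)\equiv 0$ in \eqref{eq:psi}, so every alcove walk with an $s_0$-folding contributes zero and the sum restricts to $\Gamma_0(\oa{w(\mu)},e)$, while $t_0=1$ reduces $\psi_i^{\pm}$ to $(\psi^{\tsp,C}_i)^{\pm}$ for $i=1,\dotsc,n$. Nothing further is needed.
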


\begin{proof}
The specialization $u_0=u_n=1$ yields $\psi^{\pm}_0(z)=0$ by \eqref{eq:psi}.
Thus, no folding step by $s_0$ appear in the summation in Fact \ref{fct:RYOS}. 
Also, a direct calculation shows that under $t_0=1$,
$\psi_i^{\pm} (z)$ is equal to $(\psi^{\tsp,C}_i)^{\pm}(z)$ for $i=1,\dotsc,n$.
\end{proof}

Comparing \eqref{eq:CRY:psi} and \eqref{eq:CRY:psi^sp}, we have
\begin{align}\label{eq:CRY:pp}
 (\psi^{\tsp,C}_i)^{\pm}(z)\Bigr|_{t  =t_m^{\RY}} = (\psi^C_i)^{\pm}(z), \quad 
 (\psi^{\tsp,C}_n)^{\pm}(z)\Bigr|_{t_n=t_s^{\RY}} = (\psi^C_n)^{\pm}(z).
\end{align}
Hence, to check the identification of $E^{C,\RY}_\mu(x)$ with $E^{\tsp,C}_\mu(x)$,
it is enough to construct a bijection 
\begin{align}
 \Gamma_0(\oa{w(\mu)},e) \lto \Gamma_C(\oa{w_C(\mu)},e)
\end{align}
between the sets of alcove walks.

\begin{lem}\label{lem:isom}
For any $\mu \in P_{C_n}$, take a reduced expression $w(\mu)=s_{i_1}\cdots s_{i_{\ell}}$
of the element $w(\mu) \in W$ in \eqref{eq:w(mu)}, and set 
\begin{align}
&I := \pr{r \in \pr{1,2,\dotsc,\ell} \mid i_r \neq 0} = \pr{k_1<k_2<\dotsb<k_s} 
 \quad (s \le \ell), \\
&J := \bpr{(b_1,b_2,\dotsc,b_\ell) \in \pr{0,1}^\ell \mid  b_i=1 \ (i \notin I) }.
\end{align}
Also, define $\theta^C\colon J \to \pr{0,1}^s$ by 
\begin{align}
 J \ni (b_1,b_2,\dotsc,b_\ell) \lmto (b_{k_1},b_{k_2},\dotsc,b_{k_s}) \in \pr{0,1}^s.
\end{align}
Then the following statements hold.
\begin{enumerate}[nosep]
\item 
The length of $w_C(\mu) \in W(C^{\vee,\RY})$ is $\abs{I}=s$, and 
we can write $w_C(\mu)$ by 
\begin{align}
 w_C(\mu) = \begin{cases} s_{j_1} s_{j_2} \dotsm s_{j_s} & (s    \in 2 \bN) \\ 
             \pi^{C^\vee} s_{j_1} s_{j_2} \dotsm s_{j_s} & (s \notin 2 \bN) \end{cases}
\end{align}
with some $j_r$'s, where we used the abbreviation in \eqref{eq:WCrel}.

\item
The map $\theta^C\colon J \to \pr{0,1}^s$ induces a bijection 
\begin{gather}
 \wt{\theta^C}\colon \Gamma_0(\oa{w(\mu)},e) \lto \Gamma_C(\oa{w_C(\mu)},e), \\
 p=(A,s_{i_1}^{b_1}A, \dotsc, s_{i_1}^{b_1}\cdots s_{i_{\ell}}^{b_\ell}A) \lmto 
 \begin{cases}
 (A_C, s_{j_1}^{b_{k_1}}A_C,\dotsc,s_{j_1}^{b_{k_1}} \dotsm s_{j_s}^{b_{k_s}}A_C) 
 & (s \in 2\bN) \\
 (\pi^{C^\vee} A_C, \pi^{C^\vee} s_{j_1}^{b_{k_1}}A_C,\dotsc,
  \pi^{C^\vee} s_{j_1}^{b_{k_1}} \dotsm s_{j_s}^{b_{k_s}}A_C) 
 &(s \notin 2\bN)
\end{cases}.
\end{gather}

\item
For any $p \in \Gamma_0(\oa{w(\mu)},e)$, we have 
\begin{align}
 \wgt(p) = \wgt\bigl(\wt{\theta^C}(p)\bigr), \quad 
 \dir(p) = \dir\bigl(\wt{\theta^C}(p)\bigr).
\end{align}
\end{enumerate}
\end{lem}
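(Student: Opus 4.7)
The plan is to lift everything to the isomorphism $\vp^C: W \sto W^{C^\vee,\RY}$ of Lemma \ref{lem:WChom}. Since $\vp^C$ identifies the two groups as the same subgroup of $\GL_\bR(F_\bZ)$, preserving in particular the semidirect decomposition $t(P_{C_n}) \rtimes W_0$, the lemma becomes a comparison of two reduced expressions of a common element under two different Coxeter presentations; the only substantive distinction is that $\pi^{C^\vee}$ has length zero while $s_0 \in W$ has length one.

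For part (1), I apply $\vp^C$ term by term to $w(\mu) = s_{i_1} \dotsm s_{i_\ell}$ to obtain a word with $\ell - s$ copies of $\pi^{C^\vee}$ interspersed among the $s$ letters from $\{s_1,\dotsc,s_n\}$. Using the relations in \eqref{eq:WCrel}, in particular $\pi^{C^\vee} s_0^{C^\vee} = s_1 \pi^{C^\vee}$ and $\pi^{C^\vee} s_i = s_i \pi^{C^\vee}$ for $i \ge 2$, I commute every $\pi^{C^\vee}$ leftwards and collapse consecutive pairs via $(\pi^{C^\vee})^2 = e$, arriving at $(\pi^{C^\vee})^\ve s_{j_1} \dotsm s_{j_s}$ with $\ve \in \{0,1\}$ determined by the parity condition of the statement. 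Reducedness in $W^{C^\vee,\RY}$ would follow because any strictly shorter $W^{C^\vee,\RY}$-expression would transport back under $(\vp^C)^{-1}$ to a strictly shorter $W$-expression for $w(\mu)$, contradicting the reducedness of the given expression.

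For part (2), the key geometric observation is $A^{C^\vee,\RY} = A \cup s_0 A$ (Figure \ref{fig:faC}): each Ram-Yip-type-$C_n$ alcove is the union of two $(C_n^\vee, C_n)$-alcoves related by $s_0$. An alcove walk $p \in \Gamma_0(\oa{w(\mu)}, e)$, having no $s_0$-folding, has each $s_0$-step as a forced crossing that stays within a single Ram-Yip-type-$C_n$ alcove. I would define $\wt{\theta^C}$ by retaining exactly the steps at positions $k \in I$ and verifying that the resulting sequence of alcoves is a bona fide walk in $\Gamma_C(\oa{w_C(\mu)}, e)$ starting at $A^{C^\vee,\RY}$ (resp.\ $\pi^{C^\vee} A^{C^\vee,\RY}$) according to the parity of part (1). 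Bijectivity of $\wt{\theta^C}$ then reduces to that of $\theta^C: J \to \{0,1\}^s$, which is immediate from its definition as projection onto the $I$-indexed coordinates.

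For part (3), $e(p) = s_{i_1}^{b_1} \dotsm s_{i_\ell}^{b_\ell}$ and $e(\wt{\theta^C}(p))$ are related by the same commutation-and-collapse procedure used in part (1), now applied to the bit-weighted word; since the relations in \eqref{eq:WCrel} hold on the nose in $W^{C^\vee,\RY}$, the two products represent the same element of $\GL_\bR(F_\bZ)$. Hence, by the intrinsic nature of the decomposition $t(P_{C_n}) \rtimes W_0$ for this common subgroup, $\wgt(p)$ and $\dir(p)$ coincide with those of $\wt{\theta^C}(p)$. The principal obstacle is the reducedness claim in part (1): one must carefully verify that the commutation algorithm produces a word that cannot be shortened further in $W^{C^\vee,\RY}$, which is what pins down the length $s$ exactly and fixes the parity of $\ve$ in the prescribed form.
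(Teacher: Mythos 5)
Your overall strategy --- funnelling everything through the isomorphism $\vp^C$ of Lemma \ref{lem:WChom} --- is the same as the paper's, and your treatments of parts (2) and (3) essentially coincide with the paper's proof (part (3) in both cases comes down to $\vp^C(t(\ep_i))=t(\ep_i)$ and $\rst{\vp^C}{W_0}=\id_{W_0}$). The gap is in part (1), in two places. First, your reducedness argument fails: $\vp^C$ is not length-preserving, since $s_0\in W$ has length $1$ while $\vp^C(s_0)=\pi^{C^\vee}$ has length $0$, and conversely $(\vp^C)^{-1}(s_0^{C^\vee})=s_0s_1s_0$ has length $3$. A hypothetical expression of $w_C(\mu)$ of length $s'<s$ in $W^{C^\vee,\RY}$ therefore pulls back only to a $W$-word of length up to $3s'+1$, which need not be shorter than $\ell$, so no contradiction with the reducedness of $s_{i_1}\dotsm s_{i_\ell}$ arises. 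Second, and more basically, you never verify that the element $\vp^C(w(\mu))$ you manipulate actually equals $w_C(\mu)$: both lie in the coset $t(\mu)W_0$, but each is minimal with respect to a \emph{different} length function, so their coincidence is precisely what has to be proved, not a formal consequence of $\vp^C$ being an isomorphism of abstract groups.

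The paper settles both points at once by checking $\vp^C(w(\ep_i))=w_C(\ep_i)$ directly against the explicit reduced expressions \eqref{eq:tep} and \eqref{eq:tepC} of $t(\ep_i)$ in the two presentations, and then propagating to general $\mu$ through the isomorphism. Your commutation algorithm (pushing the $\pi^{C^\vee}$'s to the front via $\pi^{C^\vee}s_0^{C^\vee}=s_1\pi^{C^\vee}$ and collapsing with $(\pi^{C^\vee})^2=e$) is a perfectly good way to obtain the \emph{form} $(\pi^{C^\vee})^{\ve}s_{j_1}\dotsm s_{j_s}$ --- note that it produces $\ve\equiv\ell-s\pmod 2$, the parity of the number of $s_0$'s in the word, rather than the parity of $s$ --- but it must be supplemented by an argument that actually pins down $\ell(w_C(\mu))=s$, e.g.\ the paper's comparison of explicit reduced expressions, or a count of the hyperplanes of $S^{C^\vee,\RY}$ separating $A^{C^\vee,\RY}$ from $w_C(\mu)A^{C^\vee,\RY}$ (using that these hyperplanes form a proper subset of those of $S$, which is also the geometric content of your observation $A^{C^\vee,\RY}=A\cup s_0A$ in part (2)).
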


\begin{proof}
\begin{enumerate}[nosep]
\item 
It is enough to show $\vp^C(w(\mu))=w_C(\mu)$ for any $\mu \in P_{C_n}$.
First, we can see $\vp^C(w(\ep_i))=w_C(\ep_i)$ by the comparison between 
the reduced expressions \eqref{eq:tep} and \eqref{eq:tepC}.
Since $\vp^C$ is a group isomorphism by Lemma \ref{lem:WChom},
we see that $\vp^C(w(\mu))=w_C(\mu)$ for any $\mu \in P_{C_n}$.

\item
It is an immediate consequence of the item (1) and the bijectivity of $\theta$.

\item
We want to show that for any $p \in \Gamma_0(\oa{w(\mu)},e)$, expressing 
$e(p)=t(\wgt(p))\dir(p)$, $\wgt(p)\in P_{C_n}$, $\dir(p)\in W_0$, we would have 
$\varphi^C(e(p))=t\bigl(\wgt(\wt{\theta^C}(p))\bigr) \dir\bigl(\wt{\theta^C}(p)\bigr)$.
For any $i=1,2,\dotsc,n$, we have $\vp^C(t(\ep_i)) = t(\ep_i)$ 
by the comparison of \eqref{eq:tep} with \eqref{eq:tepC}.
Thus we have $t(\wgt(p))=t(\wgt(\wt{\theta^C}(p)))$ for any $p$, 
which means $\wgt(p)=\wgt(\wt{\theta^C}(p))$.
On the other hand, since $\rst{\vp^C}{W_0} = \id_{W_0}$, 
we have $\dir(p)=\dir(\wt{\theta^C}(p))$ for any $p$.
Thus the statement is proved.
\end{enumerate}
\end{proof}

Combining this lemma with \eqref{eq:CRY:pp}, we obtain the desired identification
\begin{align}
 E_\mu^{\tsp,C}(x;q,t=t_m^{\RY},t_n=t_s^{\RY})=E_\mu^{C,\RY}(x;q,t_s^{\RY},t_m^{\RY}).
\end{align}
The definition \eqref{eq:EspC} of $E_\mu^{\tsp,C}(x)$ yields:
%\begin{align}\label{eq:EspC}
% E_\mu^{\tsp,C}(x) = E_\mu^{\tsp,C}(x;q,t,t_n) := E_\mu(x;q,t,1,t_n,1,1).
%\end{align}

\begin{prp}\label{prp:RY:C}
For any $\mu \in P_{C_n}$, we have 
%the specialized non-symmetric Koornwinder polynomial 
%$E_\mu^{\tsp,C}(x)$ in  is equal to the non-symmetric Macdonald polynomial
%$E_\mu^C(x)$ of type $C$ in Fact \ref{fct:CRY}.
\begin{align}
 E_\mu(x;q,t_m^{\RY},1,t_s^{\RY},1,1) = E_\mu^{C,\RY}(x;q,t_s^{\RY},t_m^{\RY}).
\end{align}
\end{prp}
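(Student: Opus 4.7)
The plan is to assemble three ingredients already established above: Lemma \ref{lem:SpCRY}, the $\psi$-function identifications \eqref{eq:CRY:pp}, and the bijection $\wt{\theta^C}$ in Lemma \ref{lem:isom}. First, by the definition \eqref{eq:EspC}, the left-hand side equals $E_\mu^{\tsp,C}(x;q,t_m^{\RY},t_s^{\RY})$, and Lemma \ref{lem:SpCRY} expresses the latter as a sum over $\Gamma_0(\oa{w(\mu)},e)$, the set of alcove walks with no folding at any $s_0$-step.

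Next, I would apply $\wt{\theta^C}\colon \Gamma_0(\oa{w(\mu)},e) \to \Gamma_C(\oa{w_C(\mu)},e)$ from Lemma \ref{lem:isom} to transport the sum. By Lemma \ref{lem:isom}(3), the monomial factor $t_{\dir(p)}^{\hf}x^{\wgt(p)}$ is preserved on the nose. For the remaining prefactor $f_p$, the identity \eqref{eq:CRY:pp} shows that the specialized factors $(\psi^{\tsp,C}_i)^{\pm}$ coincide with the Ram-Yip factors $(\psi^C_i)^{\pm}$ under $t = t_m^{\RY}$ and $t_n = t_s^{\RY}$.

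The main technical step, and the expected obstacle, is to check that the arguments $q^{\sh(-\beta_k)}t^{\hgt(-\beta_k)}$ of the $\psi$-functions in the Koornwinder expansion match the arguments $q^{\sh^C(-\beta^C_{k'})}t^{\hgt^C(-\beta^C_{k'})}$ in the Ram-Yip $C$ expansion, under the bijection. Two comparisons are needed: first, the $\rho$-shifts in the definitions of $\hgt$ and $\hgt^C$, where one verifies $\rho^{C^\vee C}_s = \rho^C_m$ and $\rho^{C^\vee C}_l = \thf \rho^C_s$; second, the discrepancy that $a_n = 2\ep_n$ in $S$ while $a_n^{C^\vee,\RY} = \ep_n$ in $S^{C^\vee,\RY}$, which can introduce a factor of two in the innermost reflection of each $-\beta_k$ when the last generator is $s_n$. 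The hope is that the factor $\thf$ in $\rho^{C^\vee C}_l$ exactly absorbs the factor $2$ coming from $a_n = 2 a_n^{C^\vee,\RY}$, so that after the specialization $t_0=u_0=u_n=1$ the two expressions coincide.

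Once these matches are confirmed, the two alcove-walk expansions become identical, yielding $E_\mu^{\tsp,C}(x;q,t_m^{\RY},t_s^{\RY}) = E_\mu^{C,\RY}(x;q,t_s^{\RY},t_m^{\RY})$ and hence the asserted identity. Thus the proof is a careful bookkeeping exercise rather than one requiring new structural ideas; the only delicate check is the compatibility of the $\sh/\hgt$-data with the bijection, which ultimately reflects the fact that the parameter specialization respects the $W$-orbit structure on the underlying affine root systems, exactly as encoded in the specialization Table \ref{tab:sp}.
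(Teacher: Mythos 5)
Your proposal follows exactly the paper's route: reduce the specialized Koornwinder sum to $\Gamma_0(\oa{w(\mu)},e)$ via Lemma \ref{lem:SpCRY}, identify the $\psi$-factors via \eqref{eq:CRY:pp}, and transport the sum through the bijection $\wt{\theta^C}$ of Lemma \ref{lem:isom}. The one point you single out as the ``expected obstacle'' --- matching the arguments $q^{\sh(-\beta_k)}t^{\hgt(-\beta_k)}$ with $q^{\sh^C(-\beta_k)}t^{\hgt^C(-\beta_k)}$, reconciling $\rho^{C^\vee C}_s=\rho^C_m$, $\rho^{C^\vee C}_l=\thf\rho^C_s$ and $a_n=2a_n^{C^\vee,\RY}$ --- is silently omitted in the paper, which passes directly from Lemma \ref{lem:isom} and \eqref{eq:CRY:pp} to the conclusion, so your flagging of that bookkeeping is if anything more careful than the printed argument.
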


Comparing this result with the specialization Table \ref{tab:sp},
we see that it corresponds to type $B_n^\vee$.
Thus, the Macdonald polynomial of Ram-Yip type $C_n$ is 
the Macdonald polynomial of type $B_n^\vee$ in the sense of Definition \ref{dfn:type}.
%\begin{tabular}{ll||ccccc}
%             &                 &$t$         &$t_0$ &$t_n$       &$u_0$ &$u_n$ \\ \hline
% $B_n^{\RY}$ &\S \ref{ss:RY:B} &$t_m^{\RY}$ &$1$   &$t_l^{\RY}$ &$1$   &$t_l^{\RY}$ \\  
%       $B_n$ &                 &$t_l$       &$1$   &$t_s$       &$1$   &$t_s$ \\ \hline
% $C_n^{\RY}$ &\S \ref{ss:RY:C} &$t_m^{\RY}$ &$1$   &$t_s^{\RY}$ &$1$   &$1$   \\
% $B_n^\vee $ &                 &$t_s$       &$1$   &$t_l^2$     &$1$   &$1$   \\ \hline
%       $D_n$ &\S \ref{ss:RY:D} &$t$         &$1$   &$1$         &$1$   &$1$
%\end{tabular}

%%%%%%%%%%%%%%%%%%%%%%%%%%%%%%%%%%%%%%%%%%%%%%%%%%%%%%%%%%%%%%%%%%%%%%%%%%%%%%%%%%%%%%%%%%
\subsection{Ram-Yip type $B_n$}\label{ss:RY:B}

The Ram-Yip formula of non-symmetric Macdonald polynomial of type $B_n$ 
is derived in \cite{RY} using the affine root system of type $B^\vee_n$ 
in the sense of loc.\ cit.
In this subsection, we give a similar argument as in the previous \S \ref{ss:RY:C} to 
type $B_n$, and show that under the specialization
\begin{align}
 t_n=u_n, \quad t_0=u_0=1
\end{align}
we can recover the non-symmetric Macdonald polynomial of type $B_n$ 
in the sense of \cite{RY} from the non-symmetric Koornwinder polynomial.

We will use similar terminologies on the affine root system and 
the non-symmetric Macdonald polynomials as in \S \ref{ss:RY:C}.
We denote by $S^{B^\vee,\RY}$ and $E^{B,\RY}_\mu(x)$ 
those considered in \cite{RY} for type $B$, 
and call them \emph{the affine root system of Ram-Yip type $B_n^\vee$} and 
\emph{the Macdonald polynomial of Ram-Yip type $B_n$}, respectively.

%%%%%%%%%%%%%%%%%%%%%%%%%%%%%%%%%%%%%%%%%%%%%%%%%%%%%%%%%%%%%%%%%%%%%%%%%%%%%%%%%%%%%%%%%%
\subsubsection{Affine root system of Ram-Yip type $B_n^\vee$}

Using the symbols in \eqref{eq:S}, 
the affine root system $S^{B^\vee,\RY}$ of Ram-Yip type is given by 
\begin{align}\label{eq:BvRY}
\begin{split}
 S^{B^\vee,\RY}
:= &(O_2 \sqcup O_4) \sqcup O_5 \\
 = &\{\pm 2\ep_i+r \mid 1 \le i \le n, \, r \in \bZ\} \sqcup 
    \{\pm \ep_i \pm \ep_j + r \mid 1 \le i < j \le n, \, r \in \bZ\}.
\end{split}
\end{align}
%Comparing it with \eqref{eq:S^C}, we have 
%\begin{align}
% S(B^{\vee,\RY}) = S^C.
%\end{align}
The choice of the basis in \cite{RY} is given by 
\begin{align}
 a_0^{B^\vee,\RY} := a_0 = -2\ep_1+c, \quad  
 a_j^{B^\vee,\RY} := a_j =  \ep_{j}-\ep_{j+1} \quad  (j=1,\dotsc,n-1), \quad
 a_n^{B^\vee,\RY} := a_n =  2\ep_n,
\end{align}
where $a_i$'s are in \eqref{eq:ai}. Thus, the associated reflections are 
$s_{a^{B^\vee}_i}=s_i$ in \eqref{eq:W0} and \eqref{eq:s0}.

We turn to the explanation of the extended affine Weyl group.
Let $\Omega_{B^\vee}$ be the automorphism group of the extended Dynkin diagram 
of type $C_n$:
\[
 \dynkin[Kac, extended,label,label macro/.code={},labels={0,1,2,\ell-2,\ell-1,\ell},
         labels*={1,2,2,2,2,1}]{C}{}
\]
Explicitly, we have 
\begin{align}
 \Omega_{B^\vee} := P^\vee_{B_n}/Q^\vee_{B_n} = P_{C_n}/Q_{C_n}
 = \tbr{\pi^{B^\vee} \mid (\pi^{B^\vee})^2=e}.
\end{align}
Then, the extended affine Weyl group $W^{B^\vee,\RY}$ is the subgroup 
of $\GL_{\bR}(V)$, $V=\oplus_{i=1}^n \bR \ep_i$ given by 
\begin{align}
 W^{B^\vee,\RY} := \bbr{s_0,s_1,\dotsc,s_n,\pi^{B^\vee}}.
\end{align}
As an abstract group, $W^{B^\vee,\RY}$ has a presentation with these generators
and the following relations.
\begin{align}%\label{eq:WBrel} 
\begin{split}
 s_i^2 = 1 & \qquad (i=0,\dotsc,n), \\
 s_i s_j = s_j s_i & \qquad (\abs{i-j}>1), \\
 s_i s_{i+1} s_i = s_{i+1} s_i s_{i+1} & \qquad (i=1,\dotsc,n-2), \\
 s_i s_{i+1} s_i s_{i+1} = s_{i+1} s_i s_{i+1} s_i & \qquad (i=0,n-1), \\
 \pi^{B^\vee} s_i= s_{n-i+1} \pi^{B^\vee} & \qquad (i=0,1,\dotsc,n).
\end{split}
\end{align}
Let us write down the action of $W^{B^\vee,\RY}$ on 
$F_\bZ=P_{C_n} \oplus \thf \bZ c$ \eqref{eq:FbZ}.
\begin{align}
&s_0(\ep_i)=\begin{cases}c-\ep_1& (i=  1)\\ \ep_i  & (i \neq 1)\end{cases}, & 
&s_j(\ep_i)=\begin{cases}  \ep_j& (i=j+1)\\ \ep_j+1& (i   =  j) \\
                           \ep_i& (i\neq j,j+1)                \end{cases}
             \quad (j=1,\dotsc,n-1), \\
&s_n(\ep_i)=\begin{cases} -\ep_n& (i=  n)\\ \ep_i  & (i \neq n) \end{cases}, & 
&\pi^{B^\vee}(\ep_i)=\hf c-\ep_{n-i+1}
\end{align}
We can see from this action that $W^{B^\vee,\RY}$ acts on $S^{B^\vee,\RY}$, 
and the description $S^{B^\vee,\RY} = O_1 \sqcup O_5$ in \eqref{eq:BvRY}
is actually the decomposition into $W^{B^\vee,\RY}$-orbits.

The group $W^{B^\vee,\RY}$ also has the following descriptions.
\begin{align}\label{eq:WBvRY}
 W^{B^\vee,\RY} = \Omega_{B^\vee} \rtimes W = t(P_{B_n}) \rtimes W_0, \quad 
 P_{B_n} := \bZ\ep_1 \oplus \dotsb \oplus \bZ\ep_n \oplus \bZ\thf(\ep_1+\dots+\ep_n),
\end{align}
where we used $t$ in \eqref{eq:t(v)}.
For later use, we write down reduced expressions of $t(\ep_i)$'s.
\begin{align}\label{eq:tepB}
\begin{split}
&t(\ep_i)=s_{i-1} \dotsm s_1 s_0 s_1 \dotsm s_n s_{n-1} \dotsm s_i 
 \quad (i=1,2,\dotsc,n), \\
&t\bigl(\thf(\ep_1+\dots+\ep_n)\bigr) = 
 \pi^{B^\vee} (s_n \dotsm s_1) \dotsm (s_n s_{n-1})s_n.
\end{split}
\end{align}

%%%%%%%%%%%%%%%%%%%%%%%%%%%%%%%%%%%%%%%%%%%%%%%%%%%%%%%%%%%%%%%%%%%%%%%%%%%%%%%%%%%%%%%%%%
\subsubsection{Ram-Yip formula of non-symmetric Macdonald polynomial of type $B_n$}

Next we consider the parameters for Macdonald polynomials.
Recalling the $W^{B^\vee,\RY}$-orbit decomposition 
$S^{B^\vee,\RY} = O_5 \sqcup (O_2 \sqcup O_4)$ in \eqref{eq:BvRY},
we take parameters $t_m^{\RY}$ and $t_l^{\RY}$ in the correspondence 
\begin{align}
 t_m^{\RY} \lrto O_5,\quad t_l^{\RY} \lrto O_2 \sqcup O_4.
\end{align}
We have the non-symmetric Macdonald polynomial of Ram-Yip type $B_n$
for $\mu \in P_{B_n}$ in \eqref{eq:PB}, which is then denoted by 
\begin{align}
 E_\mu^{B,\RY}(x) = E_\mu^{B,\RY}(x;q,t_m^{\RY},t_l^{\RY}) 
 \in \bK_{B,\RY}[x^{\pm1}], \quad 
 \bK_{B,\RY} := \bQ\bigl(q^{\hf}, (t_m^{\RY})^{\hf},(t_l^{\RY})^{\hf} \bigr).
\end{align}
For each $a=\alpha+r c \in S^{B^\vee,\RY}$, 
we define $q^{\sh^B(a)}$ and $t^{\hgt^B(a)}$ by 
\begin{align}\label{eq:shhgtB}
 q^{ \sh^B(\alpha+rc)}:=q^{-r}, \quad
 t^{\hgt^B(\alpha+rc)}:=t_m^{\tbr{\rho^B_m,\alpha}} t_l^{\tbr{\rho^B_l,\alpha}}, \quad
 \rho^B_m :=     \sum_{i=1}^n(n-i)\ep_i, \quad 
 \rho^B_l := \thf\sum_{i=1}^n     \ep_i.
\end{align}
We denote the fundamental alcove of Ram-Yip type $B_n^\vee$ by 
\begin{align} 
 A^{B^\vee,\RY} := \bpr{x \in V \mid a_i^{B^\vee}(x) \ge 0, \ i=0,1,\dotsc,n}.
\end{align}
See Figure \ref{fig:faB} for the case $n=3$.
We have $A^{B^\vee,\RY}=A$ in \eqref{eq:FA}.

\begin{figure}[htbp]
\centering
\begin{tikzpicture}
 \draw(0,0)--(2.5,0)--(1.2,2)--(0,0);
 \draw[dashed] (0,0)--(1.1,0.7);
 \draw[dashed] (2.5,0)--(1.1,0.7);
 \draw[dashed] (1.2,2)--(1.1,0.7);
 \fill (0,0) circle (2pt);
 \fill (2.5,0) circle (2pt);
 \fill (1.2,2) circle (2pt);
 \fill (1.1,0.7) circle (2pt);
 \fill[lightgray, opacity=0.2](0,0)--(2.5,0)--(1.2,2)--(0,0);
 \node [below] at (0,0) {$\hf\ep_{1}$};
 \node [below] at (2.5,0) {$\hf\ep_{2}$};
 \node [left] at (1,2) {$\hf\ep_{3}$};
 \node [left] at (1.1,0.7) {$O$};
 \coordinate [label=right: {$A^{B^\vee,\RY}$}] (A) at (2.5, 1);
\end{tikzpicture}
\caption{The fundamental alcove of Ram-Yip type $B_3^\vee$} \label{fig:faB}
\end{figure}
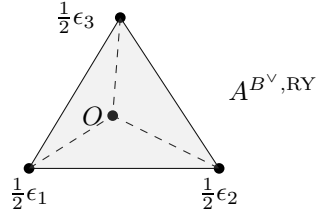

We also denote by $w_B(\mu) \in W^{B^\vee,\RY}$ 
the shortest element of the coset $t(\mu) W^{B^\vee,\RY}$.
Finally, $\Gamma_B(\oa{w},z)$ denotes the set of alcove walks 
with start $z \in W^{B^\vee,\RY}$ of type $\oa{w}$,

\begin{fct}[{\cite[Theorem 3.1]{RY}}]\label{fct:BRY}
Let $\mu \in P_{B_n}$ be arbitrary, and take a reduced expression 
$w_B(\mu)=(\pi^{B^\vee})^k s_{i_1} s_{i_2} \dotsm s_{i_r}$, $k \in \pr{0,1}$.
Then we have 
\begin{align}
&E^{B,\RY}_\mu(x) = \sum_{p\in\Gamma_B(\oa{w_(\mu)},e)} f^B_p
              t_{\dir(p)}^{\hf} x^{\wgt(p)}, \\
&f^B_p := 
 \prod_{k \in \vp_+(p)}(\psi^B_{i_k})^+(q^{\sh^B(-\beta_k)} t^{\hgt^B(-\beta_k)})
 \prod_{k \in \vp_-(p)}(\psi^B_{i_k})^-(q^{\sh^B(-\beta_k)} t^{\hgt^B(-\beta_k)}),
\end{align}
where $\beta_k := s_{i_r} \dotsm s_{i_{k+1}}(a^B_{i_r})$ for $k=1,2,\dotsc,r$, 
and $(\psi^B_i)^{\pm}(z)$ for $i=0,1,\dotsc,n$ is given by
\begin{align}\label{eq:BRY}
\begin{split}
&(\psi^B_j)^{\pm}(z) := \pm\frac{(t_m^{\RY})^{-\hf}-(t_m^{\RY})^{\hf}}{1-z^{\pm1}}
 \quad (1 \le j \le n-1), \\ 
&(\psi^B_i)^{\pm}(z) := \pm\frac{(t_l^{\RY})^{-\hf}-(t_l^{\RY})^{\hf}}{1-z^{\pm1}}
 \quad (i=0,n).
\end{split}
\end{align}
\end{fct}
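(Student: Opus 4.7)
The plan is to run the general Ram-Yip argument of \cite{RY} in the present setting, mirroring the derivation of Fact \ref{fct:RYOS} for type $(C_n^\vee,C_n)$. First, one recalls that $E^{B,\RY}_\mu(x)$ is characterized uniquely inside the basic representation of the affine Hecke algebra of Ram-Yip type $B_n^\vee$ (the analogue of $\beta^B$ in Remark \ref{rmk:B}, now for the group $W^{B^\vee,\RY}$ of \eqref{eq:WBvRY}) as the simultaneous $Y$-eigenfunction whose leading monomial is $x^\mu$. This polynomial may be produced by applying a sequence of intertwining operators $T_{i_j}'$ to the constant $1$ in the order prescribed by a reduced expression $w_B(\mu) = (\pi^{B^\vee})^k s_{i_1} \dotsm s_{i_r}$ of the shortest representative of the coset $t(\mu) W_0$.

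Second, each $T_{i_j}'$ acts on a monomial via a Lusztig-type formula modelled on \eqref{eq:Lusztig}, splitting it into a \emph{crossing} piece (proportional to $s_{i_j}\cdot x^\lambda$) and a \emph{folding} piece (a rational function in the corresponding affine root). Iterating this split over $j = 1,\dotsc,r$ and recording the binary choice at each step as a bit $b_j \in \pr{0,1}$ produces an expansion indexed by alcove walks $p \in \Gamma_B(\oa{w_B(\mu)}, e)$. The crossings jointly contribute only the leading factor $t_{\dir(p)}^{\hf} x^{\wgt(p)}$ via the endpoint identification $e(p) = t(\wgt(p))\dir(p)$, while a folding at step $k$ contributes the rational factor $(\psi^B_{i_k})^{\pm}(q^{\sh^B(-\beta_k)} t^{\hgt^B(-\beta_k)})$, with sign dictated by the orientation rule \eqref{eq:alc-sgn} and with $\beta_k = s_{i_r}\dotsm s_{i_{k+1}}(a_{i_r}^{B^\vee,\RY})$ the affine root labelling the folded wall. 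The exponents $\sh^B(-\beta_k)$ and $\hgt^B(-\beta_k)$ of \eqref{eq:shhgtB} emerge by evaluating the $Y$-eigenvalue at the corresponding edge, and the resulting two-parameter form in \eqref{eq:BRY} is forced because the labels $t_m^{\RY}$ and $t_l^{\RY}$ are attached to the $W^{B^\vee,\RY}$-orbits $O_5$ and $O_2 \sqcup O_4$ respectively.

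The main obstacle, compared with the $(C_n^\vee,C_n)$ version in Fact \ref{fct:RYOS}, is the careful treatment of the diagram automorphism $\pi^{B^\vee}$ appearing in the reduced expression of $w_B(\mu)$: it is not a reflection but an outer element of $W^{B^\vee,\RY}$, so it must be incorporated into the basic representation as a shift operator that conjugates the $Y$-subalgebra as dictated by \eqref{eq:WBvRY}. Once one confirms that the $\pi^{B^\vee}$-prefix produces no folding contribution and merely rotates the base alcove in a manner consistent with the convention that walks start at $e \cdot A^{B^\vee,\RY}$, the remainder is a routine iteration, organized via the explicit reduced expressions \eqref{eq:tepB} for $t(\ep_i)$ and the fundamental translation $t(\thf(\ep_1+\dotsb+\ep_n))$.
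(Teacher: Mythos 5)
This statement is quoted in the paper as a \emph{Fact} with the citation \cite[Theorem 3.1]{RY}; the paper itself supplies no proof, so there is nothing internal to compare against. Your outline is a faithful reconstruction of the argument behind the cited theorem --- building $E^{B,\RY}_\mu$ by intertwiners along a reduced expression of $w_B(\mu)$, expanding each step into crossing and folding pieces indexed by alcove walks, absorbing the crossings into $t_{\dir(p)}^{\hf}x^{\wgt(p)}$, and handling the length-zero element $\pi^{B^\vee}$ separately --- and it correctly identifies the only genuinely type-$B^\vee$ issues (the orbit structure $O_5$ versus $O_2\sqcup O_4$ forcing the two-parameter form of $\psi^B_i$, and the diagram automorphism). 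It remains a sketch in that the local intertwiner identities producing $\psi^{\pm}_{i_k}$ evaluated at $q^{\sh^B(-\beta_k)}t^{\hgt^B(-\beta_k)}$ are asserted rather than computed, but the strategy is exactly that of the source being cited.
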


%%%%%%%%%%%%%%%%%%%%%%%%%%%%%%%%%%%%%%%%%%%%%%%%%%%%%%%%%%%%%%%%%%%%%%%%%%%%%%%%%%%%%%%%%%
\subsubsection{Specialization to type $B_n$}

In this part, we check that the specialization $t_n=u_n$, $t_0=u_0=1$ of $E_\mu(x)$ 
in Fact \ref{fct:RYOS} is equal to $E_\mu^{B,\RY}(x)$ in Fact \ref{fct:BRY}. 
Using \eqref{eq:Emu}, we denote the specialized non-symmetric Koornwinder polynomial by
\begin{align}\label{eq:EspB}
 E_\mu^{\tsp,B}(x) = E_\mu^{\tsp,B}(x;q,t,t_n) := E_\mu(x;q,t,1,t_n,1,t_n).
\end{align}

\begin{lem}\label{lem:WBhom}
The map $s_i \mapsto s_i$ ($i=0,\dotsc,n$) defines an injective group homomorphism 
$W \inj W^{B^\vee,\RY}$. 
\end{lem}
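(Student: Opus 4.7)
The plan is a direct two-step verification, mirroring the argument in Lemma \ref{lem:WChom}.

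First, I would check that $s_0 \mapsto s_0$, $s_i \mapsto s_i$ ($1 \le i \le n$) extends to a well-defined group homomorphism $\vp^B\colon W \to W^{B^\vee,\RY}$. Since $W$ is presented by the generators \eqref{eq:W:gr} and the relations \eqref{eq:Wrel}, it suffices to verify that the images in $W^{B^\vee,\RY}$ satisfy these relations. Inspecting the presentation of $W^{B^\vee,\RY}$ given just before the lemma, the first four lines of relations (those involving only $s_0,s_1,\dotsc,s_n$ and not $\pi^{B^\vee}$) coincide verbatim with \eqref{eq:Wrel}. So the extension is automatic, with no computation required.

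Second, for injectivity, I would exploit the fact that both groups are realized as subgroups of $\GL_\bR(F)$. By \eqref{eq:W} we have $W = t(P_{C_n}) \rtimes W_0$, and by \eqref{eq:WBvRY} we have $W^{B^\vee,\RY} = t(P_{B_n}) \rtimes W_0$. Since $P_{C_n} \subset P_{B_n}$, the natural inclusion of semidirect products gives an injective map $\iota\colon W \hookrightarrow W^{B^\vee,\RY}$. To see that $\vp^B = \iota$, note that $\vp^B(s_i) = s_i = \iota(s_i)$ for $1 \le i \le n$, and the reduced expression \eqref{eq:tep} for $t(\ep_i) \in W$ is in fact the same word in $s_0,s_1,\dotsc,s_n$ as the reduced expression \eqref{eq:tepB} for $t(\ep_i) \in W^{B^\vee,\RY}$. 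Hence $\vp^B(t(\ep_i)) = t(\ep_i) = \iota(t(\ep_i))$ for each $i$, and since $\vp^B$ and $\iota$ agree on a generating set, they coincide. Therefore $\vp^B$ is injective.

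I don't anticipate any real obstacle here; this lemma is essentially a bookkeeping statement to identify $W$ with a subgroup of $W^{B^\vee,\RY}$, entirely parallel to Lemma \ref{lem:WChom}. The only point needing minor care is verifying that the same word in the $s_i$'s yields $t(\ep_i)$ in both groups, which is immediate from comparing \eqref{eq:tep} and \eqref{eq:tepB}. Unlike the type $C$ case in Lemma \ref{lem:WChom}, the homomorphism is not surjective (the extra generator $\pi^{B^\vee}$ is not in the image, reflecting the strict inclusion $P_{C_n} \subsetneq P_{B_n}$), which is why only injectivity is asserted.
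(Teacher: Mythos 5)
Your argument is correct. The paper's own proof is a one-liner: it invokes the first equality in \eqref{eq:WBvRY}, namely $W^{B^\vee,\RY} = \Omega_{B^\vee} \rtimes W$, which exhibits $W$ directly as a factor of the semidirect product, so the map on the $s_i$ is literally the inclusion of that factor and injectivity is immediate. You instead work with the second equality in the same display, $W^{B^\vee,\RY} = t(P_{B_n}) \rtimes W_0$, check well-definedness against the presentations, and then identify your homomorphism with the inclusion induced by $P_{C_n} \subset P_{B_n}$ by matching the words \eqref{eq:tep} and \eqref{eq:tepB}. Both routes pass through \eqref{eq:WBvRY}; yours is longer but makes explicit what the paper leaves as ``obvious,'' and your closing remark correctly identifies why surjectivity fails here (the index $[W^{B^\vee,\RY}:W] = |\Omega_{B^\vee}| = |P_{B_n}/P_{C_n}| = 2$), in contrast with Lemma \ref{lem:WChom}. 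One small simplification available to you: since $s_0 = t(\ep_1)s_{2\ep_1}$ by \eqref{eq:s0} and $\iota$ fixes both $t(\ep_1)$ and $s_{2\ep_1}$, you get $\iota(s_0)=s_0=\vp^B(s_0)$ directly, so the detour through the reduced expressions of all the $t(\ep_i)$ is not strictly needed.
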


\begin{proof}
Obvious from the structure 
$W^{B^\vee,\RY} = \Omega_{B^\vee} \rtimes W$ in \eqref{eq:WBvRY}.
\end{proof}

\begin{lem}\label{lem:SpBRY}
For any $\mu \in P_{C_n}$, take a reduced expression 
$w(\mu)=s_{i_1} s_{i_2} \dotsm s_{i_r}$ of the element 
$w(\mu) \in W$ in \eqref{eq:w(mu)}. Then, we have
\begin{align}
&E^{\tsp,B}_\mu(x)=\sum_{p\in\Gamma(\oa{w_(\mu)},e)} f_p
 t_{\dir(p)}^{\hf}x^{\wgt(p)}, \\
&f_p  := 
 \prod_{k \in \vp_+(p)}(\psi^{\tsp,B}_{i_k})^+(q^{\sh(-\beta_k)} t^{\hgt(-\beta_k)})
 \prod_{k \in \vp_-(p)}(\psi^{\tsp,B}_{i_k})^-(q^{\sh(-\beta_k)} t^{\hgt(-\beta_k)}),
\end{align}
where $\beta_k := s_{i_r} s_{i_{r-1}} \dotsm s_{i_{k+1}}(a_{i_r})$ for $k=1,2,\dotsc,r$, 
and $(\psi^{\tsp,B}_i)^{\pm}(z)$ for $i=0,1,\dotsc,n$ is given by 
\begin{align}\label{eq:BRY:sp}
 (\psi^{\tsp,B}_j)^{\pm}(z) := \pm\frac{t^{-\hf}-t^{\hf}}{1-z^{\pm1}}
  \quad (1 \le j \le n-1), \quad 
 (\psi^{\tsp,B}_i)^{\pm}(z) := \pm\frac{t_n^{-\hf}-t_n^{\hf}}{1-z^{\pm1}}
  \quad (i=0,n).
\end{align}
\end{lem}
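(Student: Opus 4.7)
The plan is to apply the Ram-Yip formula for the Koornwinder polynomial (Fact \ref{fct:RYOS}) to $E_\mu(x;q,t,1,t_n,1,t_n)$ and track what happens to each coefficient function $\psi_i^{\pm}(z)$ in \eqref{eq:psi} under the specialization $t_n = u_n$, $t_0 = u_0 = 1$. Unlike the type $C$ argument in Lemma \ref{lem:SpCRY}, where the specialization $u_0 = u_n = 1$ forces $\psi_0^{\pm}(z) = 0$ and therefore eliminates foldings at index $0$, the present specialization keeps all of the $\psi_i^{\pm}$ nonzero. Consequently, the summation in the resulting formula will range over the full set $\Gamma(\oa{w(\mu)}, e)$ of alcove walks, exactly as stated in the conclusion.

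The key computation is a case-by-case substitution. First, for $1 \le j \le n-1$ the function $\psi_j^{\pm}(z) = \pm (t^{-\hf} - t^{\hf})/(1-z^{\pm 1})$ depends only on $t$ and is unchanged by the specialization, matching the first formula of \eqref{eq:BRY:sp}. Second, from
\begin{align}
 \psi_0^{\pm}(z) = \pm\frac{(u_n^{-\hf}-u_n^{\hf}) + z^{\pm\hf}(u_0^{-\hf}-u_0^{\hf})}{1-z^{\pm1}},
\end{align}
the assumption $u_0 = 1$ kills the $z^{\pm\hf}$ term, and $u_n = t_n$ converts the surviving numerator to $t_n^{-\hf} - t_n^{\hf}$, giving the $i=0$ case of \eqref{eq:BRY:sp}. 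Third, from the analogous formula for $\psi_n^{\pm}(z)$, the assumption $t_0 = 1$ kills the $z^{\pm\hf}$ term and leaves $\pm (t_n^{-\hf}-t_n^{\hf})/(1-z^{\pm 1})$, matching the $i=n$ case of \eqref{eq:BRY:sp}.

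Combining the three computations with Fact \ref{fct:RYOS} yields the claimed formula, with the same indexing set $\Gamma(\oa{w(\mu)}, e)$, the same statistics $\sh$, $\hgt$, $\wgt$, $\dir$, and the same roots $\beta_k$ as in the unspecialized version. No bijection on alcove walks is needed here, in contrast to Lemma \ref{lem:isom} used in the type $C$ case; the bijective comparison with $\Gamma_B(\oa{w_B(\mu)}, e)$ and the subsequent identification with $E_\mu^{B,\RY}(x)$ from Fact \ref{fct:BRY} will be carried out in a separate step, analogous to the role of Lemmas \ref{lem:WChom} and \ref{lem:isom} in type $C$.

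The only mild subtlety to watch for is that the specialization map $(t_0,u_0,t_n,u_n) \mapsto (1,1,t_n,t_n)$ is well-defined on the base field $\bK$ and does not introduce spurious zeros in denominators of $\psi_i^{\pm}(z)$; this is immediate since the denominators $1 - z^{\pm 1}$ do not involve the specialized parameters. After this check, the proof is essentially a direct substitution, and the main conceptual work is deferred to the subsequent comparison with the Ram-Yip formula \eqref{fct:BRY} on the extended side $W^{B^\vee,\RY}$, for which Lemma \ref{lem:WBhom} supplies the needed compatibility between $W$ and $W^{B^\vee,\RY}$.
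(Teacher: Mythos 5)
Your proposal is correct and matches the paper's proof, which simply states that a direct calculation with $t_n=u_n$ and $t_0=u_0=1$ yields the result; your case-by-case substitution into $\psi_0^{\pm}$, $\psi_j^{\pm}$, $\psi_n^{\pm}$ is exactly that calculation spelled out. You also correctly observe that, unlike the type $C$ case, no $\psi_i^{\pm}$ vanishes here, so the sum stays over all of $\Gamma(\oa{w(\mu)},e)$, and that the comparison with $E_\mu^{B,\RY}$ is deferred to the subsequent proposition.
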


\begin{proof}
A direct calculation with $t_n=u_n$ and $t_0=u_0=1$ yields the result.
%$\psi_0^\pm(z)=\psi_n^\pm(z)=\pm\frac{t_n^{-\hf}-t_n^{\hf}}{1-z^{\pm1}}$.
\end{proof}
 
Since $P_{C_n} \subset P_{B_n}$, we have:

\begin{prp}\label{prp:RY:B}
For any $\mu \in P_{C_n}$, the following equality holds.
\begin{align}\label{eq:RY:B}
 E_\mu(x;q,t_m^{\RY},1,t_l^{\RY},1,t_l^{\RY}) = E_\mu^{B,\RY}(x;q,t_m^{\RY},t_l^{\RY}).
\end{align}
\end{prp}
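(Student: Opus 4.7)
The plan is to mirror the proof of Proposition \ref{prp:RY:C}. Lemma \ref{lem:SpBRY} already expresses $E_\mu^{\tsp,B}(x) = E_\mu(x;q,t,1,t_n,1,t_n)$ as a sum over \emph{all} alcove walks in $\Gamma(\oa{w(\mu)},e)$ of type $(C_n^\vee,C_n)$ with modified coefficients $\psi_i^{\tsp,B}$; unlike the $C_n$ case, no walks get killed, because the specialization $t_n=u_n$, $t_0=u_0=1$ leaves every $\psi_i^{\pm}$ nonzero. I would then compare this, after setting $t = t_m^{\RY}$ and $t_n = t_l^{\RY}$, with the Ram-Yip formula for $E_\mu^{B,\RY}(x)$ in Fact \ref{fct:BRY}.

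The first key step is to argue $\Gamma(\oa{w(\mu)},e) = \Gamma_B(\oa{w_B(\mu)},e)$ canonically. By Lemma \ref{lem:WBhom} and the decomposition $W^{B^\vee,\RY} = \Omega_{B^\vee} \rtimes W$ in \eqref{eq:WBvRY}, the length function on $W^{B^\vee,\RY}$ restricts to the length function on $W$. Since $\mu \in P_{C_n} \subset P_{B_n}$ gives $t(\mu) \in W$, the shortest element of the coset $t(\mu)W_0$ is the same whether computed in $W$ or in $W^{B^\vee,\RY}$, so $w(\mu) = w_B(\mu)$ admits the same reduced expression. Furthermore, $A = A^{B^\vee,\RY}$: the bases satisfy $a_0^{B^\vee,\RY} = a_0$ and $a_n^{B^\vee,\RY} = a_n$, while the $O_1, O_3$ roots dropped in passing from $S$ to $S^{B^\vee,\RY}$ define the same hyperplanes in $V$ as the corresponding $O_2, O_4$ roots that remain (they differ only by a scalar factor of $2$). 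Hence the two notions of alcove walks agree as sequences of alcoves.

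The second step is a direct comparison of the weight data. Under $t \mapsto t_m^{\RY}$, $t_n \mapsto t_l^{\RY}$, the functions $\psi_i^{\tsp,B}$ of \eqref{eq:BRY:sp} coincide with $\psi_i^B$ of \eqref{eq:BRY}. The shifts $\sh$ and $\sh^B$ both return $-r$ on $\alpha + rc$. For the heights, specializing $t_0 = 1$ gives $t_0 t_n = t_l^{\RY}$, and combined with the identifications $\rho_s^{C^\vee C} = \rho^B_m$ and $\rho_l^{C^\vee C} = \rho^B_l$ one concludes $t^{\hgt(\alpha+rc)} = t^{\hgt^B(\alpha+rc)}$ after substitution. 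Since $a_{i_r} = a_{i_r}^{B^\vee,\RY}$ for every $i_r \in \{0,1,\dots,n\}$ and the reflections coincide, the affine roots $\beta_k$ in the two formulas match as well, so term-by-term comparison yields \eqref{eq:RY:B}.

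The main obstacle is the canonical identification $w(\mu) = w_B(\mu)$ together with the matching of reduced expressions; once this and $A = A^{B^\vee,\RY}$ are in place, the remaining verifications are routine bookkeeping, cleaner in fact than in the $C_n$ case because no bijection between proper subsets of alcove walks has to be constructed.
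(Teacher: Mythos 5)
Your proposal is correct and follows essentially the same route as the paper: reduce to Lemma \ref{lem:SpBRY}, match the $\psi$-functions under $t=t_m^{\RY}$, $t_n=t_l^{\RY}$, and identify $\Gamma(\oa{w(\mu)},e)$ with $\Gamma_B(\oa{w_B(\mu)},e)$ via the embedding $W \hookrightarrow W^{B^\vee,\RY}$ of Lemma \ref{lem:WBhom}. Your added justification that $w(\mu)=w_B(\mu)$, that $A=A^{B^\vee,\RY}$, and that the dropped $O_1,O_3$ roots define the same hyperplanes as the retained $O_2,O_4$ roots simply makes explicit what the paper leaves implicit.
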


\begin{proof}
By \eqref{eq:EspB}, it is enough to show 
$E_\mu^{\tsp,B}(x;q,t_m^{\RY},t_l^{\RY}) = E^B_\mu(x;q,t_m^{\RY},t_l^{\RY})$.
Comparing \eqref{eq:BRY} and \eqref{eq:BRY:sp}, we have
\begin{align}
 \rst{(\psi^{\tsp,B}_j)^{\pm}(z)}{t  =t_m^{\RY}} = (\psi^B_j)^{\pm}(z), \quad 
 \rst{(\psi^{\tsp,B}_i)^{\pm}(z)}{t_n=t_l^{\RY}} = (\psi^B_i)^{\pm}(z).
\end{align}
The embedding $W \inj W^{B^\vee,\RY}$ in Lemma \ref{lem:WBhom} implies that 
we have $\Gamma(\oa{w(\mu)},e)=\Gamma_B(\oa{w(\mu)},e)$
for any $\mu \in P_{B_n} \cap P_{C_n}=P_{C_n}$. 
Then, the result follows from Lemma \ref{lem:SpBRY}.
\end{proof}

Comparing this result with the specialization Table \ref{tab:sp},
we see that the specialization \eqref{eq:RY:B} corresponds to type $B_n$.
Thus, the Macdonald polynomial of Ram-Yip type $B_n$ is 
the Macdonald polynomial of type $B_n$ in the sense of Definition \ref{dfn:type}.
%\begin{tabular}{ll||ccccc}
%             &                 &$t$         &$t_0$ &$t_n$       &$u_0$ &$u_n$ \\ \hline
% $B_n^{\RY}$ &\S \ref{ss:RY:B} &$t_m^{\RY}$ &$1$   &$t_l^{\RY}$ &$1$   &$t_l^{\RY}$ \\  
%       $B_n$ &                 &$t_l$       &$1$   &$t_s$       &$1$   &$t_s$ \\ \hline
% $C_n^{\RY}$ &\S \ref{ss:RY:C} &$t_m^{\RY}$ &$1$   &$t_s^{\RY}$ &$1$   &$1$   \\
% $B_n^\vee $ &                 &$t_s$       &$1$   &$t_l^2$     &$1$   &$1$   \\ \hline
%       $D_n$ &\S \ref{ss:RY:D} &$t$         &$1$   &$1$         &$1$   &$1$
%\end{tabular}

%%%%%%%%%%%%%%%%%%%%%%%%%%%%%%%%%%%%%%%%%%%%%%%%%%%%%%%%%%%%%%%%%%%%%%%%%%%%%%%%%%%%%%%%%%
%%%%%%%%%%%%%%%%%%%%%%%%%%%%%%%%%%%%%%%%%%%%%%%%%%%%%%%%%%%%%%%%%%%%%%%%%%%%%%%%%%%%%%%%%%
\subsection{Type $D_n$}\label{ss:RY:D}

By Proposition \ref{prp:D}, we know that the specialization
\begin{align}
 t_n=u_n=t_0=u_0=1
\end{align}
yields the non-symmetric Macdonald polynomial $E^D_\mu(x)$ of type $D_n$.
In this subsection, we reprove it by using the Ram-Yip formula of type $D_n$,
in which case there is no discrepancy between \cite{M} and \cite{RY},
so we use our default notation for the affine root system and 
the non-symmetric Macdonald polynomials based on \cite{M}.

%%%%%%%%%%%%%%%%%%%%%%%%%%%%%%%%%%%%%%%%%%%%%%%%%%%%%%%%%%%%%%%%%%%%%%%%%%%%%%%%%%%%%%%%%%
\subsubsection{Ram-Yip affine root system of type $D$}

Recall the affine root system $S^D$ of type $D_n$ given in \eqref{eq:S^D}:
\begin{align}
 S^D := O_5 = \{\pm \ep_i \pm \ep_j + r \mid 1 \le i < j \le n, \, r \in \bZ\}.
\end{align}
A basis given by 
\begin{align}
 a_0^D := -\ep_1-\ep_2+c, \quad 
 a_j^D := a_j =  \ep_j-\ep_{j+1} \quad (1 \le j \le n-1), \quad 
 a_n^D := \ep_{n-1}+\ep_n.
\end{align}
Denoting $s^D_n := s_{a_n^D}$, the finite Weyl group is given by 
$W_0^D := \bbr{s_1,\dotsc,s_{n-1},s^D_n} \simeq \pr{\pm1}^{n-1} \rtimes \frS_n$.
Also, recall the weight lattice $P_{D_n}$ in \eqref{eq:PD}:
\begin{align}
 P_{D_n} := \bZ\ep_1 \oplus \dotsb \oplus \bZ\ep_n \oplus \bZ\thf(\ep_1+\dotsb+\ep_n)
\end{align}
and the extended affine Weyl group $W^D  = W_0^D \ltimes t(P_{D_n})$ in \eqref{eq:WD}.
The group $W^D$ has another description:
\begin{align}\label{eq:WDstr}
 W^D = \br{s_0^D,s_1,\dotsc,s_{n-1},s^D_n,\pi^D_1,\pi_{n-1}^{D},\pi^D_n}.
\end{align}
Here $\pi_1^D$, $\pi_{n-1}^D$ and $\pi_n^D$ denotes the generators
of the automorphic group 
\begin{align}
 \Omega_D := P_{D_n}/Q_{D_n}=\br{\pi^D_0=e,\pi_1^{D},\pi_{n-1}^D,\pi_n^D }
\end{align}
of the extended Dynkin diagram of type $D_n$:
\[
 \dynkin[Kac, extended,label,label macro/.code={},labels={0,1,2,\ell-2,\ell-1,\ell},
         labels*={0,1,2,3,,n-2,n-1,n}]{D}{}
\]
As an abstract group, $W^D$ is presented by the generators \eqref{eq:WDstr} 
and the following relations.
\begin{align}\label{eq:WDrel} 
&s_i^2 = (s_0^D)^2=e, \\
&s_0^D s_1 = s_1 s_0^D, &
&s_{n-1} s_n^D = s_n^D s_{n-1}, & 
&s_i s_j = s_j s_i \quad (\abs{i-j}>1), \\
&s^D_0 s_2 s^D_0 = s_2 s_0^D s_2, & 
&s^D_n s_{n-2} s^D_n = s_{n-2} s_n^D s_{n-2}, & 
&s_i s_{i+1} s_i = s_{i+1} s_i s_{i+1} \quad (i=1,\dotsc,n-2), \\
&\pi^D_1 s_0 = s_1 \pi^D_1, & 
&\pi^D_{n-1} s^D_{0}=s_{n-1} \pi^D_{n-1}, &
&\pi_n^D s_0^D = s_n^D \pi_n^D, \\
&\pi_{n-1}^D s_1 = s_n \pi_{n-1}^D, & 
&\pi_n^D s_1 = s_n \pi_n^D, & 
&\pi_{n-1}^D s_i = s_{n-i} \pi_{n-1}^D \quad (i=2,\dotsc,n-2), \\
&\pi_n^D s_i = s_{n-i} \pi_n^D & & (i=2,\dotsc,n-2), & 
&(\pi_1^D)^2=(\pi^D)^2=(\pi_n^D)^2=e \quad (i=1,\dotsc,n). 
\end{align}

Although it will not be used explicitly, 
let us write down the action of $W^D$ on $F_\bZ$ \eqref{eq:FbZ}.
\begin{align}
&s^D_0(\ep_i)=\begin{cases} c-\ep_2 & (i=1)   \\ c-\ep_1 & (i=2) \\
                              \ep_i & (i \neq 1,2)   \end{cases}, & 
&s_j  (\ep_i)=\begin{cases}   \ep_j & (i=j+1) \\ \ep_j+1 & (i=j) \\
                              \ep_i & (i \neq j, j+1)\end{cases} 
 \quad (j=1,\dotsc,n-1), \\
&s^D_n(\ep_i)=\begin{cases} -\ep_n & (i=n-1) \\-\ep_{n-1} & (i=n) \\
                             \ep_i & (i \neq n-1,n)  \end{cases}, &
&\pi_n^{D}(\ep_i) = \hf c-\ep_{n-i+1} \quad (i=0,\dotsc,n), \\
&\pi^D_1(\ep_i)=\begin{cases}c-\ep_1 & (i=1) \\ \ep_i & (i \neq 1) \end{cases}, & 
&\pi^D_{n-1}(\ep_i)=\begin{cases} \hf c+\ep_n & (i=1) \\
                                  \hf c-\ep_{n-i+1}   & (i \neq 1) \end{cases}.
\end{align}
We also write down reduced expressions of $t(\ep_i) \in W^D$:
\begin{align}\label{eq:tepD}
 t(\ep_1) = \pi_1^D, \quad 
 t(\ep_2) = \pi_1^D s_0^D s_1, \quad
 t(\ep_i) = \pi_1^D s_{i-1} \dotsm s_2 s_0^D s_1 \dotsm s_{i-1} \quad (i=3,\dotsc,n).
\end{align}

%%%%%%%%%%%%%%%%%%%%%%%%%%%%%%%%%%%%%%%%%%%%%%%%%%%%%%%%%%%%%%%%%%%%%%%%%%%%%%%%%%%%%%%%%%
\subsubsection{Ram-Yip formula of non-symmetric Macdonald polynomial of type $D$}

There is a unique $W^D$-orbit on the affine root system $S^D$, i.e., $O_5$,
and correspondingly we set the parameter
\begin{align}
 t \lrto O_5.
\end{align}
See also \ref{eq:D:kN}. For $\mu \in P_{D_n}$, 
the non-symmetric Macdonald polynomial of type $D_n$ is denoted by 
\begin{align}
 E^D_\mu(x) = E^D_\mu(x; q,t).
\end{align}

For each $a=\alpha+r c\in S^D$, we define $\sh^D(a)$ and $\hgt^D(a)$ by
\begin{align}\label{eq:shhgtD}
 q^{ \sh^D(\alpha+r c)} := q^{-r}, \quad
 t^{\hgt^D(\alpha+r c)} := t^{\br{\rho^D,\alpha}}, \quad 
 \rho^D=\sum_{i=1}^n (n-i) \ep_i.
\end{align}
We also denote by $w_D(\mu) \in W^D$ the shortest element in the coset $t(\mu)W_0^D$.
For $\mu=\ep_i$, $i=1,2,\dotsc,n$, they are given by 
\begin{align}\label{eq:wmuD}
 w_D(\ep_1) = \pi^D_1, \quad 
 w_D(\ep_2) = \pi^D_1 s_0^D, \quad 
 w_D(\ep_i) = \pi^D_1 s_{i-1} \dotsm s_2 s_0^D \quad (3 \le i \le n).
\end{align}
The fundamental alcove of type $D_n$ is denoted by 
\begin{align} 
 A^D := \pr{x \in V \mid a_i^D(x) \ge 0, \ i=0,1,\dotsc,n}.
\end{align}
Finally, we denote by $\Gamma_D(\oa{w},z)$ the set of all alcove walks 
with start $z \in W^D$ of type $\oa{w}$,

\begin{fct}[{\cite[Theorem 3.1]{RY}}]\label{fct:DRY}
For $\mu \in P_{D_n}$, take a reduced expression 
$w_D(\mu)=\pi_j^D s_{i_1} \dotsm s_{i_r}$ of the element $w_D(\mu) \in W^D$
with some $j\in\pr{0,1,n-1,n}$. 
Then we have 
\begin{align}\label{eq:DRY}
&E^D_\mu(x)=\sum_{p\in\Gamma_D(\oa{w_(\mu)},e)} f^D_p
 t_{\dir(p)}^{\hf}x^{\wgt(p)}, \\
&f^D_p  := 
 \prod_{k\in\varphi_+(p)}(\psi^{D}_{i_k})^{+}(q^{\sh^D(-\beta_k)} t^{\hgt^D(-\beta_k))})
 \prod_{k\in\varphi_-(p)}(\psi^{D}_{i_k})^{-}(q^{\sh^D(-\beta_k))} t^{\hgt^D(-\beta_k)}),
\end{align}
where $\beta_k := s_{i_r} \dotsm s_{i_{k+1}}(a^D_{i_r})$ for $k=1,2,\dotsc,r$, 
and $(\psi^D_i)^{\pm}(z)$ for $i=0,1,\dotsc,n$ is given by 
\begin{align}
 &(\psi^D_i)^{\pm}(z) := \pm\frac{t^{-\hf}-t^{\hf}}{1-z^{\pm1}}.
\end{align}
\end{fct}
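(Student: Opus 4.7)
The plan is to deduce Fact~\ref{fct:DRY} as the specialization of the general Ram--Yip alcove walk formula \cite[Theorem~3.1]{RY} to the type $D_n$ data set up above. The underlying proof of the general Ram--Yip formula rests on the intertwiner construction of non-symmetric Macdonald polynomials inside the polynomial representation of the affine Hecke algebra, so my plan splits into two tasks: invoke the general machinery, then carefully match conventions.

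First, I would verify that the type $D_n$ affine root system $S^D$, the extended affine Weyl group $W^D = W_0^D \ltimes t(P_{D_n})$, the fundamental alcove $A^D$, and the basis $\pr{a_0^D,\ldots,a_n^D}$ satisfy the hypotheses of \cite{RY}: $S^D$ is reduced with a single $W^D$-orbit $O_5$, and $W^D$ decomposes as $\Omega_D \ltimes W(S^D)$ via \eqref{eq:WDstr}. The minimal coset representative $w_D(\mu) \in W^D$ is the standard object associated with $\mu \in P_{D_n}$, and its reduced expressions have the form $\pi_j^D s_{i_1} \cdots s_{i_r}$ as in \eqref{eq:tepD} and \eqref{eq:wmuD}.

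Second, I would identify the grading functions: $q^{\sh^D(\alpha+rc)} = q^{-r}$ encodes the affine direction, and $t^{\hgt^D(\alpha+rc)} = t^{\tbr{\rho^D,\alpha}}$ encodes the finite direction with $\rho^D = \sum_i (n-i)\ep_i$ the half-sum of positive roots of type $D_n$. Since $S^D$ has a single orbit with the single parameter $t$, all intertwiner factors collapse into the uniform $\psi^D_i$ displayed in Fact~\ref{fct:DRY}, of the form $\pm(t^{-\hf}-t^{\hf})/(1-z^{\pm1})$, and the enumeration of intermediate roots by $\beta_k = s_{i_r}\cdots s_{i_{k+1}}(a^D_{i_r})$ is the general Ram--Yip recipe. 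Third, I would expand the product of intertwiners $\tau_{i_1}\cdots\tau_{i_r}$ (preceded by the diagram automorphism $\pi_j^D$) applied to the constant polynomial $1$ using the crossing/folding relation at each step: each choice $b_k=1$ selects a crossing and each $b_k=0$ selects a folding with coefficient $(\psi^D_{i_k})^\pm$. The resulting sum is indexed precisely by $\Gamma_D(\oa{w_D(\mu)},e)$, and the endpoint $e(p) = t(\wgt(p))\dir(p)$ yields the monomial $t_{\dir(p)}^{\hf} x^{\wgt(p)}$.

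The main obstacle will be the notational reconciliation between the Ram--Yip conventions in \cite{RY} and the Macdonald conventions from \cite{M} adopted throughout this paper (following \cite{YK}), specifically the sign conventions for positive versus negative foldings in \eqref{eq:alc-sgn}, the orientation of alcove walks, and the handling of the extended element $\pi_j^D$ at the head of the reduced expression; one must check that $\pi_j^D$ contributes only a rigid diagram-automorphism permutation and no folding, and that the formula is independent of the chosen reduced expression after $\pi_j^D$. In type $D_n$ this matching is the simplest among all cases treated in \S\ref{s:RY}, since the root system is simply laced with a single orbit, which makes the verification considerably more tractable than in types $B$ or $C$ handled in \S\ref{ss:RY:B} and \S\ref{ss:RY:C}.
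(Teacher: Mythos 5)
The paper presents this statement as a \emph{Fact} cited directly from \cite[Theorem 3.1]{RY} and gives no proof of its own, so your proposal --- specializing the general Ram--Yip intertwiner/alcove-walk machinery to the single-orbit type $D_n$ data, collapsing all $\psi$-factors to the uniform $\pm(t^{-\hf}-t^{\hf})/(1-z^{\pm 1})$, and reconciling the conventions of \cite{RY} with those of \cite{M} --- is exactly the justification the citation is standing in for. Your plan is correct and takes essentially the same route as the paper.
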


For distinction, we denote by $E_{\mu}^{D,\RY}(x;q,t)$ 
the right hand side of \eqref{eq:DRY}.

%%%%%%%%%%%%%%%%%%%%%%%%%%%%%%%%%%%%%%%%%%%%%%%%%%%%%%%%%%%%%%%%%%%%%%%%%%%%%%%%%%%%%%%%%%
\subsubsection{Specialization to type $D_n$}

In this part, we specialize $t_n=u_n=t_0=u_0=1$ in $E_\mu(x)$ in Fact \ref{fct:RYOS}, 
and show that it is equal to $E_\mu^{D,\RY}(x)$ in Fact \ref{fct:DRY}. 
We denote the specialized Koornwinder polynomial by 
\begin{align}\label{eq:RY:spD}
 E_\mu^{\tsp,D}(x;q,t) := E_\mu(x;q,t,1,1,1,1).
\end{align}

Let $\Gamma_{0,n}(\oa{w(\mu)},e) \subset \Gamma(\oa{w(\mu)},e)$
be the subset consisting of alcove walks without folding by $s_0$ or $s_n$.
%まず, $t,t_0=t_n=u_0=u_n=1$ という特殊化によって
%事実\ref{fct:RYOS}は$\Gamma_{0,n}(\oa{w(\mu)},e)$に渡る和でかける.

\begin{lem}\label{lem:SpDRY}
For any $\mu \in P_{C_n}$, take a reduced expression 
$w(\mu)=s_{i_1} s_{i_2} \dotsm s_{i_r}$ of the element $w(\mu) \in W$ in \eqref{eq:w(mu)}. 
Then we have
\begin{align}
&E^{\tsp,D}_\mu(x) = \sum_{p \in \Gamma_{0,n}(\oa{w(\mu)},e)} f_p
 t_{\dir(p)}^{\hf} x^{\wgt(p)}, \\
&f_p  := 
 \prod_{k\in\vp_+(p)}(\psi^{\tsp,D}_{i_k})^+(q^{\sh(-\beta_k)} t^{\hgt(-\beta_k)})
 \prod_{k\in\vp_-(p)}(\psi^{\tsp,D}_{i_k})^-(q^{\sh(-\beta_k)} t^{\hgt(-\beta_k)}), \\
&(\psi^{\tsp,D}_i)^{\pm}(z) :=
 \pm\frac{t^{-\hf}-t^{\hf}}{1-z^{\pm1}} \quad (i=1,2,\dotsc,n-1).
\end{align}
\end{lem}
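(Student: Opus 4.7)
The plan is to specialize the Ram-Yip type formula of Fact \ref{fct:RYOS} directly, mimicking the arguments already used in Lemmas \ref{lem:SpCRY} and \ref{lem:SpBRY}. The key observation is that the parameter dependence in the formula of Fact \ref{fct:RYOS} enters only through the functions $\psi_i^\pm(z)$ defined in \eqref{eq:psi}; the index set $\Gamma(\oa{w(\mu)},e)$, the weights $\wgt(p)$, the directions $\dir(p)$, and the arguments $q^{\sh(-\beta_k)}t^{\hgt(-\beta_k)}$ are all independent of $t_0,t_n,u_0,u_n$.

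First, I would read off the effect of the specialization $t_0=u_0=t_n=u_n=1$ on the three families of $\psi$'s. By inspection of \eqref{eq:psi}, $\psi_j^\pm(z)$ for $1\le j\le n-1$ is unaffected, while both $\psi_0^\pm(z)$ and $\psi_n^\pm(z)$ become identically zero. This is because in the numerator of $\psi_0^\pm(z)$ one has $(u_n^{-\hf}-u_n^{\hf}) + z^{\pm\hf}(u_0^{-\hf}-u_0^{\hf})$, which vanishes when $u_0=u_n=1$; similarly for $\psi_n^\pm(z)$ under $t_0=t_n=1$.

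Second, I would use this vanishing to prune the sum. Under the specialization, any term $f_p$ in Fact \ref{fct:RYOS} containing a folding step (positive or negative) with $i_k\in\{0,n\}$ acquires a factor $\psi_{i_k}^\pm(\cdot)=0$ and therefore contributes nothing. Hence the sum over $\Gamma(\oa{w(\mu)},e)$ collapses to the sum over the subset $\Gamma_{0,n}(\oa{w(\mu)},e)$ of alcove walks whose foldings occur only at indices $1\le i_k\le n-1$. For these surviving walks, every folding factor is $\psi_{i_k}^\pm(z)$ with $1\le i_k\le n-1$, which agrees with $(\psi^{\tsp,D}_{i_k})^\pm(z)$ by the first step.

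Putting the two observations together yields exactly the claimed expression. I do not anticipate any real obstacle here: the only potential subtlety is making sure that the specialization of $t_0,u_0,t_n,u_n$ to $1$ in the base field $\bK$ of \eqref{eq:bK} is compatible with the factorization of $f_p$ as a product of $\psi$'s, but this is immediate since each $\psi_{i_k}^\pm$ appears as an independent factor, and the remaining ingredients $t_{\dir(p)}^{\hf}$, $x^{\wgt(p)}$, $q^{\sh(-\beta_k)}$, $t^{\hgt(-\beta_k)}$ involve only $t$ and $q$ and are therefore untouched by the specialization.
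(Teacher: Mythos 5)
Your proposal is correct and is essentially the paper's own (two-sentence) proof: the specialization $t_0=t_n=u_0=u_n=1$ kills $\psi_0^{\pm}$ and $\psi_n^{\pm}$, so every walk with a folding at index $0$ or $n$ contributes zero and the sum restricts to $\Gamma_{0,n}(\oa{w(\mu)},e)$, while $\psi_j^{\pm}=(\psi^{\tsp,D}_j)^{\pm}$ for $1\le j\le n-1$. The only slight imprecision is your claim that the arguments $t^{\hgt(-\beta_k)}$ and the prefactor $t_{\dir(p)}^{\hf}$ are independent of the specialized parameters — in fact $t^{\hgt}$ carries a factor $(t_0t_n)^{\tbr{\rho_l^{C^\vee C},\alpha}}$ and $t_{\dir(p)}$ can involve $t_n$ — but both simply specialize (to $1$ in the offending factors) and the lemma's statement is to be read after specialization, so nothing is affected.
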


\begin{proof}
The specialization $t_0=t_n=u_0=u_n=1$ in \eqref{eq:psi} yields
$\psi_0^{\pm}(z)=\psi^{\pm}_n(z)=0$. Thus the folding steps by $s_0$ or $s_n$ 
does not appear in the summation of Fact \ref{fct:RYOS}.
\end{proof}

Thus, it is enough to construct a bijection
$\Gamma_{0,n}(\oa{w(\mu)},e) \to \Gamma_D(\oa{w_D(\mu)},e)$.

\begin{lem}\label{lem:WDhom}
The following gives an injective group homomorphism $\vp^D: W \to W^D$.
\begin{align}
 \vp^D(s_0) = \pi_1^{D},\quad
 \vp^D(s_i) = s_i \quad (1 \le i \le n-1),\quad
 \vp^D(s_n) = e.
\end{align}
\end{lem}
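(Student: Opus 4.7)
The plan proceeds in two stages: establishing well-definedness of $\vp^D$ by checking the defining relations of $W$, then establishing injectivity by constructing a left inverse in the spirit of Lemmas \ref{lem:WChom} and \ref{lem:WBhom}.

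For well-definedness, using the presentation \eqref{eq:W:gr}--\eqref{eq:Wrel}, I would verify that each defining relation of $W$ is satisfied by the images in $W^D$. The quadratic relations become $(\pi_1^D)^2=e$, $s_i^2=e$, and $e^2=e$, all contained in \eqref{eq:WDrel}. The far commutations $s_i s_j = s_j s_i$ for $\abs{i-j}>1$ split into three cases: those involving $s_n$ collapse trivially since $\vp^D(s_n)=e$; those with both indices in $\{1,\dotsc,n-1\}$ are preserved identically; those with one index equal to $0$ reduce to $\pi_1^D s_j = s_j \pi_1^D$ for $j\ge 2$, which is the standard property of the diagram automorphism $\pi_1^D \in \Omega_D$ fixing vertices $\ge 2$. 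The middle braid relations carry over identically. The four-cycle at $i=n-1$ becomes $s_{n-1}^2 = s_{n-1}^2$. The substantive case is the four-cycle at $i=0$: $(s_0 s_1)^2 = (s_1 s_0)^2$ translates to $(\pi_1^D s_1)^2 = (s_1 \pi_1^D)^2$ in $W^D$, and using $\pi_1^D s_0^D = s_1 \pi_1^D$ together with $(\pi_1^D)^2=e$ from \eqref{eq:WDrel} yields $\pi_1^D s_1 \pi_1^D = s_0^D$, whence the identity reduces to the commutation $s_0^D s_1 = s_1 s_0^D$ already in \eqref{eq:WDrel}.

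For injectivity, the plan is to define $\psi\colon \vp^D(W)\to W$ on generators by $\psi(\pi_1^D) := s_0$ and $\psi(s_j) := s_j$ for $1\le j\le n-1$, then verify well-definedness of $\psi$ by translating the inherited relations of $\br{\pi_1^D, s_1,\dotsc,s_{n-1}} \subset W^D$ (drawn from \eqref{eq:WDrel}, including $s_0^D = \pi_1^D s_1 \pi_1^D$ forcing $\psi(s_0^D)=s_0 s_1 s_0$) into relations in $W$ appearing in \eqref{eq:Wrel}. The essential checks reduce to $s_0^2=e$, the far commutation $s_0 s_j = s_j s_0$ for $j\ge 2$, and the four-cycle $(s_0 s_1)^4 = e$ (which encodes $s_0^D s_1 = s_1 s_0^D$ under $\psi$). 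With $\psi$ in hand, $\psi\circ\vp^D$ is the identity on $s_0, s_1,\dotsc,s_{n-1}$ by construction.

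The main obstacle is the remaining generator $s_n$. Since $\vp^D(s_n)=e$, any homomorphic left inverse forces $\psi(e)=s_n$, in conflict with $\psi(e)=e$; equivalently, post-composing $\vp^D$ with the faithful action of $W^D$ on $F_\bZ$ sends $s_n$ to the trivial linear automorphism, whereas $s_n\in W$ acts non-trivially. Consequently, the stated injectivity can be carried through only under restriction to the Coxeter subgroup $\br{s_0, s_1,\dotsc, s_{n-1}}\subset W$, for which the left-inverse construction above applies verbatim and yields an embedding onto $\br{\pi_1^D, s_0^D, s_1,\dotsc,s_{n-1}}\subset W^D$. Pinning down the correct reading of the injectivity assertion, and recording the consistency check needed for $\psi$ on the image generators, is the delicate step that requires care.
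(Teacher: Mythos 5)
Your well-definedness check is exactly the paper's proof: the paper verifies that the relations \eqref{eq:Wrel} of $W$ are sent by $\vp^D$ to consequences of \eqref{eq:WDrel}, works out the four-cycle at $i=0$ via $\pi_1^D s_1 \pi_1^D = s_0^D$ reducing it to $s_0^D s_1 = s_1 s_0^D$ (the same computation you give), and dismisses the remaining relations as similar. Your more systematic case split, including the trivial collapse of every relation involving $s_n$, is a fuller version of the same argument and is correct.

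Your diagnosis of the injectivity claim is also correct, and it exposes a genuine defect in the statement rather than in your proof: since $\vp^D(s_n)=e$ while $s_n=s_{2\ep_n}$ acts nontrivially on $V$ (it negates $\ep_n$), the kernel of $\vp^D$ contains a nontrivial element, so $\vp^D$ cannot be injective --- in contrast to Lemmas \ref{lem:WChom} and \ref{lem:WBhom}, where every generator has a nontrivial image. The paper's own proof never addresses injectivity at all; the word ``injective'' in the statement should be deleted (or the conclusion restricted to the parabolic subgroup $\br{s_0,s_1,\dotsc,s_{n-1}}$, as you propose). Note that nothing downstream is affected: Lemma \ref{lem:isomD} only uses that $\vp^D$ is a homomorphism satisfying $\vp^D(w(\ep_i))=w_D(\ep_i)$ and $\vp^D(t(\ep_i))=t(\ep_i)$, never its injectivity. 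So your attempt should not be read as failing to prove the statement; it proves the part of the statement that is true and correctly refutes the part that is not.
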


\begin{proof}
We can check that the relations \eqref{eq:Wrel} of W are mapped by $\vp^D$ to 
those \eqref{eq:WDrel} of $W^D$.
Indeed, as for the final relation 
$s_0 s_1 s_0 s_1 = s_1 s_0 s_1 s_0$ in \eqref{eq:Wrel}, we have
\begin{align}
 \vp^D(s_0 s_1 s_0 s_1)  =     \vp^D(s_1 s_0 s_1 s_0) \iff
 \pi^D_1 s_1 \pi^D_1 s_1 = s_1 \pi^D_1 s_1 \pi^D_1 \iff
               s_0^D s_1 = s_1 s_0^D,
\end{align}
which is in the second line of \eqref{eq:WDrel}.
The other relations can be checked similarly. 
\end{proof}

\begin{lem}\label{lem:isomD}
For any $\mu\in P_{C_n}$, take a reduced expression 
$w(\mu)=s_{i_1} s_{i_2} \dotsm s_{i_{\ell}}$
of the element $w(\mu) \in W$ in \eqref{eq:w(mu)}, and set 
\begin{align}
&I_0 := \pr{ r \in \pr{1,2,\dotsc,\ell} \mid i_r \neq 0}, \quad 
 I_n := \pr{ r \in \pr{1,2,\dotsc,\ell} \mid i_r \neq n}, \\
&I := I_0 \cup I_n = \pr{k_1<k_2<\dotsb<k_s} \quad  (s \le \ell), \quad 
 J := \bpr{(b_1,b_2,\dotsc,b_\ell) \in \pr{0,1}^\ell \mid  b_i=1\ (i \notin I) }.
\end{align}
Using them, define $\theta^D\colon J \to \pr{0,1}^s$ by
\begin{align}
 J \ni (b_1,b_2,\dotsc,b_\ell) \lmto (b_{k_1},b_{k_2},\dotsc,b_{k_s}) \in \pr{0,1}^s.
\end{align}
%Then the following statements hold.
\begin{enumerate}[nosep]
\item 
The length of $w_D(\mu) \in W^D$ is equal to $\abs{I}=s$, and 
\begin{align}
w_{D}(\mu)=
\begin{cases}
         s_{j_1} s_{j_2} \dotsm s_{j_s} & (\abs{I_0}    \in 2\bZ) \\
 \pi_1^D s_{j_1} s_{j_2} \dotsm s_{j_s} & (\abs{I_0} \notin 2\bZ)
\end{cases}.
\end{align}

\item
The map $\theta^D : J\rightarrow \pr{0,1}^s$ induces a bijection 
\begin{align}
&\wt{\theta^D}: \Gamma_{0,n}(\oa{w(\mu)},e) \to \Gamma_D(\oa{w_D(\mu)},e), \\
&(A,s_{i_1}^{b_1}A, \dotsc, s_{i_1}^{b_1} \dotsm s_{i_{\ell}}^{b_\ell}A) \lmto 
 \begin{cases}
 (A_D, s_{j_1}^{b_{k_1}}A_D,\dotsc,s_{j_1}^{b_{k_1}} \dotsm s_{j_s}^{b_{k_s}}A_D) 
 & (\abs{I_0} \in 2\bZ) \\
 (\pi_1^D A_D, \pi_1^D s_{j_1}^{b_{k_1}}A_D,\dotsc,
  \pi_1^D s_{j_1}^{b_{k_1}} \dotsm s_{j_s}^{b_{k_s}}A_D) 
 & (\abs{I_0} \notin 2\bZ)
\end{cases}.
\end{align}

\item
For any $p\in \Gamma_{0,n}(\oa{w(\mu)},e)$, we have 
$\wgt(p)=\wgt(\wt{\theta^D}(p))$, $\dir(p)=\dir(\wt{\theta^D}(p))$.
\end{enumerate}
\end{lem}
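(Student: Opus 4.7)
The plan is to mirror the proof of Lemma \ref{lem:isom}, replacing the type-$C$ homomorphism $\vp^C$ by the type-$D$ homomorphism $\vp^D$ of Lemma \ref{lem:WDhom}. The key driving identity will be $\vp^D(w(\mu)) = w_D(\mu)$ for every $\mu \in P_{C_n}$.

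For item (1), I would first verify the generator case $\mu = \ep_i$. Apply $\vp^D$ to the reduced expression $w(\ep_i) = s_{i-1} \dotsm s_1 s_0$ from \eqref{eq:wmuCC} to get $s_{i-1} \dotsm s_1 \pi_1^D$, and compare with $w_D(\ep_i)$ from \eqref{eq:wmuD} by using the relation $\pi_1^D s_0^D = s_1 \pi_1^D$, the commutation of $\pi_1^D$ with $s_2,\dotsc,s_{n-1}$, and $(\pi_1^D)^2 = e$. Since $\vp^D$ is a group homomorphism and $P_{C_n} = \bigoplus_i \bZ \ep_i$, this extends to all $\mu$. The length assertion then follows by counting: in $\vp^D(w(\mu))$, each $s_n$ disappears (as $\vp^D(s_n) = e$), each $s_0$ becomes a $\pi_1^D$, and the remaining $|I|$ generators $s_{i_r}$ for $r \in I$ eventually yield the $s_{j_1},\dotsc,s_{j_s}$ after gathering the $\pi_1^D$'s. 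The parity of the number of $s_0$'s, equivalently the parity of $|I_0| = \ell - \#\{s_0\}$ modulo the parity of $\ell$, decides whether a leading $\pi_1^D$ survives, yielding the case split.

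For item (2), observe that $\theta^D\colon J \to \{0,1\}^s$ is a bijection by construction (bits outside $I$ are fixed to be $1$). Combined with item (1), this lifts to a bijection of alcove walks: a walk in $\Gamma_{0,n}(\oa{w(\mu)},e)$ with bit sequence $(b_1,\dotsc,b_\ell) \in J$ corresponds to the walk in $\Gamma_D(\oa{w_D(\mu)},e)$ read off from $(b_{k_1},\dotsc,b_{k_s})$ against the reduced expression of $w_D(\mu)$ established in item (1), starting from $A_D$ or $\pi_1^D A_D$ according to the parity case. For item (3), I will use that $\vp^D(t(\ep_i)) = t(\ep_i)$ in $W^D$, checked by applying $\vp^D$ to the reduced expression \eqref{eq:tep}: the middle $s_n$ vanishes, successive pairs $s_{n-1}s_{n-1}$, $s_{n-2}s_{n-2}$, $\dotsc$ collapse via $s_j^2 = e$, and the single $\pi_1^D$ is moved into place to match \eqref{eq:tepD}. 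Hence $\vp^D(t(\wgt(p))) = t(\wgt(p))$. Since $\vp^D(W_0) \subseteq W_0^D$ (because $\vp^D(s_i) \in \{s_i, e\}$ for $i = 1,\dotsc,n$), uniqueness of the semidirect product decomposition $W^D = t(P_{D_n}) \rtimes W_0^D$ yields $\wgt(p) = \wgt(\wt{\theta^D}(p))$ and the desired identification of $\dir(p)$ with $\dir(\wt{\theta^D}(p))$ under $\vp^D|_{W_0}$.

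The main obstacle will be the length and canonical-form assertion in item (1). Because conjugation by $\pi_1^D$ swaps $s_0^D \leftrightarrow s_1$ while leaving $s_2,\dotsc,s_{n-1}$ fixed, moving all $\pi_1^D$'s to one end of the product $\vp^D(w(\mu))$ changes some of the intermediate $s_1$'s into $s_0^D$'s (and vice versa) in a way that depends on how many $\pi_1^D$'s each generator passes through. A careful bookkeeping—most naturally by induction on $\ell$, tracking for each $r \in I$ which of $\{s_0^D, s_1\}$ the generator $s_{j_r}$ ultimately becomes—will be required to verify that the resulting expression is genuinely reduced and matches the stated form.
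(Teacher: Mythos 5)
Your proposal follows essentially the same route as the paper: reduce everything to the identity $\vp^D(w(\mu))=w_D(\mu)$, verified on the generators $\ep_i$ by comparing \eqref{eq:wmuCC} with \eqref{eq:wmuD} and extended via the homomorphism of Lemma \ref{lem:WDhom}, then deduce (2) from the bijectivity of $\theta^D$ and (3) from $\vp^D(t(\ep_i))=t(\ep_i)$ together with the semidirect product decomposition of $W^D$. Your write-up is in fact more careful than the paper's (notably on the bookkeeping of the $\pi_1^D$ factors and the parity giving the case split), and no gap in the approach needs to be flagged.
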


\begin{proof}
\begin{enumerate}
\item 
It is enough to show $\vp^D(w(\mu))=w_{D}(\mu)$ for any $\mu \in P_{C_n}$.
By the reduced expressions \eqref{eq:wmuCC} and \eqref{eq:wmuD}, we have 
$\vp^D(w(\ep_i))=w_D(\ep_i)$ for each $i=1,2,\dotsc,n$. 
Then, since $\vp^D$ is a group homomorphism by Lemma \ref{lem:WDhom}, 
we find the desired equality. 

\item
It is an immediate consequence of (1) and the bijectivity of $\wt{\theta^D}$.

\item
Similarly as (1), we have $\vp^D(t(\ep_i))=t(\ep_i)$ for each $i=1,2,\dotsc,n$,
and thus $t(\wgt(p))=t(\wgt(\wt{\theta^D}(p)))$ for each 
$p \in \Gamma_{0,n}(\oa{w(\mu)},e)$, which implies $\wgt(p)=\wgt(\wt{\theta^D}(p))$. 
As for the remaining $\vp^D(\dir(p))=\dir(\wt{\theta^D}(p))$, 
since $\vp^D(s_n)=e$ and $\vp^D$ preserves $s_1,s_2,\dotsc,s_{n-1}$,
the specialization $t_n=1$ yields $t_{\dir(p)}=t_{\dir(\wt{\theta^D}(p))}$,
which givers the consequence.
\end{enumerate}
\end{proof}

Thus we have $E_\mu^{\tsp,D}(x;q,t) = E_\mu^{D,\RY}(x;q,t)$ 
for any $\mu \in P_{C_n} \subset P_{D_n}$. 
Using \eqref{eq:RY:spD}, we have the conclusion:

\begin{prp}\label{prp:RY:D}
For any $\mu \in P_{C_n}$, the following equality holds.
\begin{align}
 E_\mu(x;q,t,1,1,1,1) = E_\mu^{D,\RY}(x;q,t).
\end{align}
\end{prp}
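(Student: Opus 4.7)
The plan is to follow the template of Propositions \ref{prp:RY:B} and \ref{prp:RY:C} verbatim. I would denote $E_\mu^{\tsp,D}(x;q,t) := E_\mu(x;q,t,1,1,1,1)$ (which is exactly \eqref{eq:RY:spD}) and argue that $E_\mu^{\tsp,D}(x;q,t) = E_\mu^{D,\RY}(x;q,t)$ for every $\mu \in P_{C_n} \subset P_{D_n}$. The two key tools are already in place. Lemma \ref{lem:SpDRY} collapses the generic Ram-Yip sum of Fact \ref{fct:RYOS} to a sum over $\Gamma_{0,n}(\oa{w(\mu)},e)$ — the alcove walks with no folding at $s_0$ or $s_n$ — with simplified coefficients $(\psi^{\tsp,D}_i)^{\pm}(z) = \pm(t^{-\hf}-t^{\hf})/(1-z^{\pm 1})$, since $\psi_0^{\pm}=\psi_n^{\pm}=0$ under $t_0=t_n=u_0=u_n=1$ by \eqref{eq:psi}. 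Lemma \ref{lem:isomD} then provides the bijection $\wt{\theta^D}\colon \Gamma_{0,n}(\oa{w(\mu)},e) \to \Gamma_D(\oa{w_D(\mu)},e)$, preserving both $\wgt$ and $\dir$, which identifies the two index sets of summation.

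With these at hand, it remains to match the summands term by term. The prefactors $t_{\dir(p)}^{\hf}x^{\wgt(p)}$ agree immediately by the $\wgt$/$\dir$ preservation in Lemma \ref{lem:isomD}(3). For the scalars $f_p$ versus $f^D_{\wt{\theta^D}(p)}$, the $\psi$-functions coincide literally: $(\psi^{\tsp,D}_j)^{\pm}(z)$ from Lemma \ref{lem:SpDRY} and $(\psi^D_j)^{\pm}(z)$ from Fact \ref{fct:DRY} are both given by $\pm(t^{-\hf}-t^{\hf})/(1-z^{\pm 1})$. For the monomial arguments, both $q^{\sh}$ and $q^{\sh^D}$ equal $q^{-r}$ on $\alpha+rc$, and under $t_0=t_n=1$ the height factor $t^{\br{\rho^{C^\vee C}_s,\alpha}}(t_0 t_n)^{\br{\rho^{C^\vee C}_l,\alpha}}$ collapses to $t^{\br{\rho^{C^\vee C}_s,\alpha}} = t^{\br{\rho^D,\alpha}}$, because $\rho^{C^\vee C}_s = \sum_{i=1}^n(n-i)\ep_i = \rho^D$ by comparison with \eqref{eq:shhgtD}.

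The one genuinely delicate step — and the main obstacle I expect — is checking that the affine roots $\beta_k = s_{i_r}\dotsm s_{i_{k+1}}(a_{i_r}) \in S$ arising in the $(C_n^\vee,C_n)$ expansion correspond, under $\wt{\theta^D}$, to the analogous roots $\beta'_{k'} \in S^D$ arising in the type-$D$ expansion, for the surviving indices $i_k \notin \pr{0,n}$. The homomorphism $\vp^D$ of Lemma \ref{lem:WDhom} sends $s_0 \mapsto \pi_1^D$ and $s_n \mapsto e$, so a reduced expression for $w_D(\mu)$ is obtained from that of $w(\mu)$ by dropping all $s_n$-letters and converting $s_0$-letters. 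I would then trace the action on $S^D = O_5$ inductively from the right, verifying that the $s_n$-letters appearing after position $k$ either do not touch the $\wt{R}$-part of $\beta_k$, or act as $\ep_n \mapsto -\ep_n$, a symmetry under which $O_5 = \pr{\pm\ep_i\pm\ep_j + rc}$ is manifestly invariant. This bookkeeping is entirely analogous to the $s_0$-accounting already carried out in the proof of Lemma \ref{lem:isomD}(3); once it is settled, term-by-term matching yields $E_\mu^{\tsp,D}(x;q,t) = E_\mu^{D,\RY}(x;q,t)$, and the definition \eqref{eq:RY:spD} closes the proof.
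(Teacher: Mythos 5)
Your proposal follows exactly the route the paper takes: specialize via \eqref{eq:RY:spD}, collapse the sum to $\Gamma_{0,n}(\oa{w(\mu)},e)$ by Lemma \ref{lem:SpDRY}, and transport it through the bijection of Lemma \ref{lem:isomD} built on the homomorphism $\vp^D$ of Lemma \ref{lem:WDhom}. The only difference is that you spell out the term-by-term matching of the $\psi$-functions, the heights ($\rho^{C^\vee C}_s=\rho^D$), and the $\beta_k$'s, which the paper leaves implicit after stating the two lemmas; this is a faithful, if slightly more explicit, version of the same argument.
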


%%%%%%%%%%%%%%%%%%%%%%%%%%%%%%%%%%%%%%%%%%%%%%%%%%%%%%%%%%%%%%%%%%%%%%%%%%%%%%%%%%%%%%%%%%
%%%%%%%%%%%%%%%%%%%%%%%%%%%%%%%%%%%%%%%%%%%%%%%%%%%%%%%%%%%%%%%%%%%%%%%%%%%%%%%%%%%%%%%%%%
\section{Concluding remarks}\label{s:rmk}

The original motivation of our study on specialization is to find some explicit formula 
of symmetric Macdonald-Koornwinder polynomials, bearing in mind 
the Macdonald tableau formula \cite[Chap.\ VI, (7.13), (7.13')]{Mb} for type $\GL_n$.
Certain progress has been developed for such tableau formulas of type $B,C,D$ and 
$(C_n^\vee,C_n)$ by the recent papers \cite{FHNSS,HS1,HS2}, although the connection to 
Ram-Yip type formulas seems to be still unclear.

Another interesting theme is the $t= \infty$ limit.
By Sanderson \cite{San} and Ion \cite{I}, it is known that the graded character of 
the level one (thin) Demazure module of an affine Lie algebra of type $X_l^{(r)}$,
$X=A,D,E$, is equal to non-symmetric Macdonald polynomial of the corresponding type 
specialized at $t =  \infty$ if $X_l^{(r)} \neq A_{2l}^{(2)}$, and equal to
non-symmetric Koornwinder polynomial specialized at $t = \infty$ in $A_{2l}^{(2)}$.
There are vast amount of literature on this topic from representation-theoretic,
combinatoric, and geometric points of view.
For example, Orr and Shimozono \cite{OS} studied the relation of the limits
and quantum Bruhat graphs.
Let us also mention the article \cite{Chi} by Chihara, 
where the Demazure specialization for type $A_{2l}^{(2)}$ is derived by
the geometric consideration of a Demazure slice of the same type $A_{2l}^{(2)}$.

Returning to out study, it would be interesting to find a concrete connection 
between our argument and the geometric argument given in \cite{Chi}.
Let us close this note by a naive explanation on why 
the non-symmetric Koornwinder polynomial is related to the representation theory
of the affine Lie algebra of type $A_{2l}^{(2)}$.
According to \cite[\S 3.2]{I} and \cite[\S 1.5]{Chi}, one considers the specialization
of the Noumi parameters 
\begin{align}\label{eq:spI}
 (t,t_0,t_n,u_0,u_n)=(t,t,t,1,t).
\end{align}
Here we exchanged the specialized value of $(t_0,u_0)$ and $(t_n,u_n)$ in loc.\ cit., 
due to the numbering of roots explained below.
Comparing \eqref{eq:spI} and the specialization Table \ref{tab:sp}, 
we find that \eqref{eq:spI} is included as the case $t_m=t_l^2=t_s=t$ 
in the $BC_n$ specialization of \S \ref{ss:BC}:
\begin{center}
\begin{tabular}{l||lllll}
       & $t$   & $t_0$   & $t_n$  & $u_0$ & $u_n$  \\ \hline
$BC_n$ & $t_m$ & $t_l^2$ & $t_s$  & $1$   & $t_s$   
\end{tabular}
\end{center}
Let us write again the Dynkin diagram \eqref{dynkin:BC} 
of the affine root system of type $BC_n$:
\begin{align}
 \dynkin[mark=o, edge length=.75cm, reverse arrows, 
         labels={}, labels*={0,1,2,,,n-1,n}] A[2]{even}
\end{align}
This is in fact the Dynkin diagram for the affine Lie algebra of type $A^{(2)}_n$
for even $n$ \cite[p.55, \S 4.8, Table Aff 2]{Ka},
with the numbering of the roots $0,1,\dotsc,n$ reversed.
Thus, very naively speaking, we can read the result of Ion on the Koornwinder 
specialization \cite[\S 3]{I} from our specialization Table \ref{tab:sp}.

%%%%%%%%%%%%%%%%%%%%%%%%%%%%%%%%%%%%%%%%%%%%%%%%%%%%%%%%%%%%%%%%%%%%%%%%%%%%%%%%%%%%%%%%%%
\begin{Ack}
The authors thank Professor Masatoshi Noumi for valuable comments.
In particular, Remarks \ref{rmk:C} and \ref{rmk:B} are added 
as a result of the correspondences with him.
\end{Ack}

%%%%%%%%%%%%%%%%%%%%%%%%%%%%%%%%%%%%%%%%%%%%%%%%%%%%%%%%%%%%%%%%%%%%%%%%%%%%%%%%%%%%%%%%%%
%%%%%%%%%%%%%%%%%%%%%%%%%%%%%%%%%%%%%%%%%%%%%%%%%%%%%%%%%%%%%%%%%%%%%%%%%%%%%%%%%%%%%%%%%%

\end{document}